\renewenvironment{proof}{{\bf \emph{Proof.} }}{\hfill  \\} 
\renewcommand{\epsilon}{\varepsilon}
\renewcommand{\setminus}{\smallsetminus}
\renewcommand{\emptyset}{\varnothing}
\newtheorem{theorem}{Theorem}[section]
\newtheorem{proposition}[theorem]{Proposition}
\newtheorem{corollary}[theorem]{Corollary}
\newtheorem{lemma}[theorem]{Lemma}
\theoremstyle{definition}
\theoremstyle{remark}
\newtheorem{remark}[theorem]{Remark}
\newcommand{\U}{\mathcal U}
\newcommand{\FP}{\operatorname{FP}}
\newcommand{\Ho}{\operatorname{H}}
\newcommand{\cohom}[3]{H^{{\raise1pt\hbox{$\scriptstyle#1$}}}(#2\>\!,#3)}
\newcommand{\tatecohom}[3]%
  {\widehat H^{{\raise1pt\hbox{$\scriptstyle#1$}}}(#2\>\!,#3)}
\newcommand{\Cohom}[3]%
  {H^{{\raise1pt\hbox{$\scriptstyle#1$}}}\big(#2\>\!,#3\big)}
\newcommand{\Tatecohom}[3]%
  {\widehat H^{{\raise1pt\hbox{$\scriptstyle#1$}}}\big(#2\>\!,#3\big)}
\newcommand{\homol}[3]{H_{{\lower1pt\hbox{$\scriptstyle#1$}}}(#2\>\!,#3)}
\newcommand{\homolog}[2]{H_{{\lower1pt\hbox{$\scriptstyle#1$}}}(#2)}
\renewcommand{\ker}{\operatorname{Ker}}
\newcommand{\mono}{\rightarrowtail}
\newcommand{\epi}{\twoheadrightarrow}
\title{Subdirect sums of Lie algebras}
\author{D.~ H. ~Kochloukova}
\address{Dessislava H.~Kochloukova, Department of Mathematics, University of Campinas,
13083-859, Campinas, SP, Brazil}\email{desi@ime.unicamp.br}
\author{C.~Mart\'inez-P\'erez}
\address{Conchita Mart\'inez-P\'erez, Departamento de Matem\'aticas, Universidad de Zaragoza,
50009 Zaragoza, Spain} \email{conmar@unizar.es}
\thanks{The first names author was supported by  ``bolsa de produtividade em pesquisa'' CNPq, 303350/2013-0, CNPq, Brazil and by FAPESP, 2016/05678-3, Brazil. The second named author was supported by Gobierno de Aragon, European Regional Development Funds and
MTM2015-67781-P (MINECO/FEDER)}
\date{\today} % Activate to display a given date or no date
\keywords{}
\subjclass[2000]{%\color{red}
20J05}
\thanks{}
\begin{document}

\thispagestyle{empty}

\begin{abstract} We show Lie algebra versions of some results on homological finiteness properties of subdirect products of groups. These results include a version of the 1-2-3 Theorem.
\end{abstract}

\maketitle

\centerline{\sl Dedicated to the memory of Gilbert Baumslag}

\section{Introduction}

The aim of this paper is to explore subdirect sums of Lie algebras from the point of view of their homological properties, in the same spirit of many recent results for subdirect products of groups. We shall look at two types of results: On the one hand at certain theorems on subdirect products of  free groups \cite{BaumRose} that were recently generalised to the case of subdirect products of limit groups \cite{BHMS-1}, \cite{BHMS}; on the other at fibre sums of Lie algebras and Lie versions of the $n$-$n+1$-$n+2$ Conjecture and the homotopical $n$-$n+1$-$n+2$ Conjecture (\cite{Beno}, \cite{Francismar}).

For both types of results, the theory of subdirect products of groups was developed using a lot of homotopical methods that are not available in the category of Lie algebras and furthermore  some of the auxiliary statements of the homotopical approach do not make sense for Lie algebras since the group version uses the idea of subgroups of finite index. One simple example in this direction is the fact that any non-trivial element of a free group of finite rank is a part of a free basis of a subgroup of finite index. In the case of Lie algebras there is no analogous of the finite index notion. 

In this paper we adopt homological methods to study subdirect sums of Lie algebras of homological type $FP_s$. All the Lie algebras in this paper are over a field $K$
of arbitrary characteristic.
We define the subdirect Lie sum 
$$
L \leq L_1 \oplus \ldots \oplus L_k
$$
of Lie algebras $L_1, \ldots L_k$  as a Lie subalgebra $L$ of the direct sum $L_1 \oplus \ldots \oplus L_k$ such that $L$ projects surjectively on $L_i$ for every $1 \leq i \leq k$.  A Lie algebra $L$ is of type $FP_s$ if the trivial $\U(L)$-module $K$ has a projective resolution with all projective modules finitely generated in dimension $ \leq s$ i.e.
there is an exact complex
$${\mathcal P} : \ldots \to P_i \to P_{i-1} \to \ldots \to P_1 \to P_0 \to K \to 0
$$
such that $P_i$ is a finitely generated projective $\U(L)$-module for $i \leq s$, where $\U(L)$ denotes the universal enveloping algebra of $L$.
 Note that if $L$ is a Lie algebra of homological type $FP_s$, then  $\dim \Ho_i(L, K) < \infty$  for $i \leq s$.
Little is known about the homological type $FP_s$ for Lie algebras.  A complete classification of  metabelian Lie algebras of type $FP_2$ was discovered by Bryant and Groves in \cite{BryantGroves1}, \cite{BryantGroves2} in terms of the so called Bryant-Groves invariant  $\Delta$. It turned out that for a finitely generated metabelian Lie algebra the homological property  $FP_2$ and finite presentability are equivalent. A classification of split metabelian Lie algebras $L$ of type $FP_s$ also in terms of $\Delta$  was  proved by Kochloukova in \cite{Desi}. By definition $L$ is a {\bf split} metabelian Lie algebra if there is a short exact split sequence of Lie algebras $A \rightarrowtail L \epi Q$ with $A$ and $Q$ abelian. In \cite{GrovesKoch} Groves and Kochloukova proved that the finitely presented soluble Lie algebras of type $FP_{\infty}$ are  finite dimensional.
% We say that the subdirect Lie sum $L$ is proper if $L \cap L_i \not= 0$ for every $ 1 \leq i \leq k $. 

Our first result is on subdirect sums $L \leq L_1 \oplus \ldots \oplus L_k$  when each $L_i$ is a non-abelian free Lie algebra.

\medskip
{\bf Theorem A.} {\it Let $s \geq 2$ be a natural number and $$L \leq F_1 \oplus \ldots \oplus F_k$$ be a subdirect sum  of finitely generated non-abelian free Lie algebras $F_1, \ldots, F_k$ such that $L$ is of type $FP_s$ and $L\cap F_i\neq 0$ for $1\leq i\leq k$. Then for every $ 1 \leq i_1 <  i_2 < \ldots < i_s \leq k$ we have that
$p_{i_1, \ldots, i_s}(L) = F_{i_1} \oplus \ldots \oplus F_{i_s}$.}

\medskip

A group theoretic version of  Theorem A is a special case of a result of Kochloukova on subdirect product of non-abelian limit groups \cite{Desi2} and it states that the projection of the subdirect product  on the direct product of $s$ coordinates groups  is {\bf virtually }  surjective. Note that in the Lie algebra version the word virtually disappears due to the phenomena mentioned before.
\medskip

As a corollary we obtain the following result.

\medskip
{\bf Theorem B.}
{\it Let $F_1,\ldots,F_k$ be finitely generated  free Lie algebras and assume that 
$$L\leq F_1\oplus\ldots\oplus F_k$$
is a subdirect Lie sum with $L$ of type $FP_k$. Then $L$ is a direct sum of at most $k$ free Lie algebras.}

\medskip
 A group theoretic version of Theorem B was established by Baumslag and Roseblade in \cite{BaumRose}.  This was generalised to the case of subdirect products of non-abelian surface groups by Bridson, Howie, Miller and Short in \cite{BHMS-1} and later  the same authors resolved  the case of subdirect product of limit groups in \cite{BHMS}. All three papers mix homological with homotopical methods but in all cases there are homotopical ingredients that do not pass to the category of Lie algebras.

In the rest of the paper we consider  fibre sums of Lie algebras. Let 
$A \rightarrowtail L_1 \buildrel{\pi_1}\over\epi Q$ and $B \rightarrowtail L_2 \buildrel{\pi_2}\over\epi Q$ be short exact sequences of Lie algebras. The fibre sum $P$ of these two short exact sequences is the Lie algebra
$$
P = \{ (h_1, h_2) \in L_1  \oplus L_2 \mid \pi_1(h_1) = \pi_2(h_2) \}
$$
By definition $P$ is a Lie subalgebra of the direct sum $L_1 \oplus L_2$.

There are homotopical and homological Conjectures on fibre products of groups (see \cite{Beno}, \cite{Francismar}) that suggest when the fibre has homotopical type $F_s$ and  homological type $FP_s$. If $s \geq 2$, a group has a homotopical type $F_s$ if it is finitely presented and has the homological type $FP_s$.
Motivated by the existing homotopical and homological Conjectures on fibre products of groups  we suggest the following two conjectures:

\medskip
{\bf The  $n$-$n+1$-$n+2$ Conjecture for Lie algebras.}  {\it Let $n \geq 1$ be a natural number. Let $$A \rightarrowtail L_1  \buildrel{\pi_1}\over\epi Q  \hbox{ and }
B \rightarrowtail L_2  \buildrel{\pi_2}\over\epi Q$$ be short exact sequences of Lie algebras with  both $L_1$ and $L_2$ of homological type $FP_{n+1}$ and finitely presented, $A$ of type $FP_n$  and $Q$ of type $FP_{n+2}$. Then the fibre Lie sum
$$
P = \{ (h_1, h_2) \in L_1  \oplus L_2 \mid \pi_1(h_1) = \pi_2(h_2) \}
$$
has type $FP_{n+1}$ and is finitely presented.}

\medskip
{\bf Remark.} {\it Since $L_1$ is a finitely presented Lie algebra with an ideal $A$ that is finitely generated as a Lie algebra we have that $Q \simeq L_1 / A$ is a finitely presented Lie algebra.}

\medskip
Note that whereas the statement of the original conjecture in the group case when $n \geq 2$ assumes that  $A$ is finitely presented, we are not assuming this. 
 In the case of groups more information on  the $n$-$n+1$-$n+2$ Conjecture can be found in \cite{Beno}. The $n$-$n+1$-$n+2$ Conjecture for groups holds for $n = 1$, in this case it is called the 1-2-3 Theorem and an algorithmic proof was presented by Bridson, Howie, Miller and Short in \cite{BHMS2}.  The symmetric case of the 1-2-3 Theorem  was  established earlier by Baumslag, Bridson, Miller and Short in \cite{BBMS}. As pointed by Kuckuck in \cite{Beno} the $n$-$n+1$-$n+2$  Conjecture for groups implies another conjecture  i.e. the Virtual Surjection Conjecture. 
It is our believe that both the ordinary and the homological version of 
the $n$-$n+1$-$n+2$ Conjecture for groups hold. More on the homological version of the $n$-$n+1$-$n+2$ Conjecture for groups could be found in \cite{Francismar}. 
We state below the homological version of the Conjecture for Lie algebras.

\medskip
{\bf The homological  $n$-$n+1$-$n+2$ Conjecture for Lie algebras.}  {\it Let $n \geq 1$ be a natural number. Let  $$A \rightarrowtail L_1  \buildrel{\pi_1}\over\epi Q  \hbox{ and }
B \rightarrowtail L_2  \buildrel{\pi_2}\over\epi Q$$
 be short exact sequences of Lie algebras with  both $L_1$ and $L_2$ of homological type $FP_{n+1}$, $A$ of type $FP_n$  and $Q$ of type $FP_{n+2}$. Then the fibre Lie sum
$$
P = \{ (h_1, h_2) \in L_1  \oplus L_2 \mid \pi_1(h_1) = \pi_2(h_2) \}
$$
has type $FP_{n+1}$.}

\medskip

In this paper we show that the first conjecture for Lie algebras holds for $n = 1$. It is worth noting that the proof in the group case is homotopical and uses significantly the second homotopy group of a CW-complex plus some results on Pheifer movements \cite{BBMS}. In the Lie algebra case these techniques are not available.
 
\medskip
{\bf Theorem C.} {\it The $1$-$2$-$3$ Conjecture for Lie algebras holds. Furthermore  the $1$-$2$-$3$-Conjecture holds without assuming that $Q$ is of type $FP_3$ if $H_2(A,K)$ is finitely generated as $\U(Q)$-module.}

\medskip
We obtain the following corollaries of Theorem C.

\medskip
{\bf Corollary D1.} {\it Let $$L \leq L_1 \oplus \ldots \oplus L_k$$  be a subdirect Lie sum  with $L \cap L_i \not= 0$ and $L_i$ finitely presented Lie algebra for all $i$. Assume that $p_{i,j} (L) = L_i \oplus L_j$ for all $1 \leq i < j \leq k$, where  $p_{i,j} : L_1 \oplus \ldots \oplus L_k \to L_i \oplus L_j$ is the canonical projection. Then $L$ is a finitely presented Lie algebra. }

\medskip
{\bf Corollary D2.} {\it Let $$L \leq L_1 \oplus \ldots \oplus L_k$$ be a subdirect Lie sum   with $L \cap L_i \not= 0$ and $L_i$ free non-abelian Lie algebra  for any $i$. Then $L$ is a finitely presented Lie algebra if and only if   $p_{i,j} (S) = L_i \oplus L_j$ for all $1 \leq i < j \leq k$.}

\medskip
We also consider the homological $1$-$2$-$3$-Conjecture for Lie algebras and show

\medskip
{\bf Theorem E.} {\it The homological $1$-$2$-$3$-Conjecture for Lie algebras holds if in addition we assume that $Q$ is a finitely presented Lie algebra.}

\medskip
From this we get the following two corollaries.

\medskip
{\bf Corollary F1.}  {\it Let $$L \leq L_1 \oplus \ldots \oplus L_k$$ be a subdirect Lie sum with $L \cap L_i \not= 0$ and $L_i$ of type $FP_2$  for all $i$. Assume that $p_{i,j} (L) = L_i \oplus L_j$ for all $1 \leq i < j \leq k$, where  $p_{i,j} : L_1 \oplus \ldots \oplus L_k \to L_i \oplus L_j$ is the canonical projection. Then $L$ is a Lie algebra of type $FP_2$.}

 \medskip 
{\bf Corollary F2. }
{\it Let  $$L \leq L_1 \oplus \ldots \oplus L_k$$ be a subdirect Lie sum 
 with $L \cap L_i \not= 0$ and $L_i$ free non-abelian Lie algebra for all $i$. Then $L$ is finitely presented as a Lie algebra if and only if $L$ is of type $FP_2$.}

\medskip
As shown by Bridson, Howie, Miller and Short Corollary D1 and Corollary D2 hold in the case of groups with surjection substituted by virtual surjection, see \cite{BHMS}. The group theoretic versions of Theorem E, Corollary F1 and Corollary F2, where in the corollaries surjection is substituted with virtual surjection, were recently proved by Kochloukova and Lima in \cite{Francismar}. We finish with a Lie algebra version of another recent result from \cite{Francismar}

\medskip
{\bf Theorem G.} {\it 
a) The homological $n$-$n+1$-$n+2$-Conjecture for Lie algebras holds if the second short exact  sequence $B \rightarrowtail L_2 \epi Q$ splits.

b) If the homological $n$-$n+1$-$n+2$-Conjecture for Lie algebras holds whenever $L_2$ is a free Lie algebra then it holds for any Lie algebra $L_2$.}

\medskip
 Note that a group theoretic homotopical version of Theorem G was proved by Kuckuck in \cite{Beno}. The methods used in \cite{Beno} are homotopical and use the Borel construction and the theory of stacks, so are not aplicable for Lie algebras. An alternative approach (using spectral sequence arguments) to a homological group theoretic version was developed in \cite{Francismar}, here we adapt this approach to Lie algebras.

We state a Lie algebra version of the Virtual Surjection Conjecture from \cite{Beno} 

\medskip
{\bf The Homological Surjection Conjecture for Lie algebras.} {\it Let  $m,k$ be natural numbers such that $ 2 \leq m \leq k$ and $$L \leq L_1 \oplus \ldots \oplus L_k$$ be a subdirect Lie sum 
 with $L \cap L_i \not= 0$, where each Lie algebra $L_i$ is $FP_m$ and for every $1 \leq i_1 < i_2 \ldots < i_m \leq m$ we have for the canonical projection $p_{i_1, \ldots, i_m} : L_1 \oplus \ldots \oplus L_k \to L_{i_1} \oplus \ldots \oplus L_{i_m}$ that $p_{i_1, \ldots, i_m}(L) = L_{i_1} \oplus \ldots \oplus L_{i_m}$. Then $L$ is a Lie algebra  of type $FP_m$.}

\medskip
{\bf Theorem H.} {\it If the homological $m-1$-$m$-$m+1$-Conjecture for Lie algebras holds then the Homological Surjection Conjecture for Lie algebra holds too.} 

\section{ Preliminaries on Lie algebras : finite presentability and homological properties of Lie algebras}

Recall that every Lie algebra $L$ over a field $K$  embeds in its universal enveloping algebra $\U(L)$. All the Lie algebras in this paper are over a field $K$
of arbitrary characteristic.
The universal enveloping algebra $\U(L)$ is an associative $K$-algebra, where for $a,b \in L$ we have $[a,b] = ab - ba$. If $L$ is a free Lie algebra with a free basis $X$ then $\U(L)$ is a free associative $K$-algebra with a basis $X$. 
The Poincare-Birkhoff-Witt theorem produces a basis of $\U(L)$ as a vector space over $K$ \cite{Bakhturin}, \cite{Jacobson}. For general homological results for Lie algebras, the reader is referred to  \cite[Section~7]{Weibel}.

\subsection{Finitely presented Lie algebras}

In the category of Lie algebras for any set $X$ of arbitrary cardinality there exists a free object $F(X)$  with $X$ as free basis \cite{Bakhturin}. A free Lie algebra $F = F(X)$ has a universal enveloping algebra $\U(F)$ which is a free associative algebra  with basis $X$. Every Lie algebra $L$  is isomorphic to $F / R$, where $F$ is a free Lie algebra and $R$ is an ideal of $F$. If  there is  such an isomorphism and $F=F(X)$ with $X$ finite then we say that $L$ is finitely generated and if moreover $R$ can be generated as an ideal by finitely many elements then we say that $L$ is finitely presented.

\begin{lemma}\label{propfree} Let $F$ be a free Lie algebra. Then
\begin{itemize}
\item[i)]( \cite{Sirsov}, \cite{Witt} ) any Lie subalgebra of $F$ is free.

\item[ii)] (\cite[theorem~3]{BBaumslag}) if $F$ is finitely generated and $0\neq I$ is an ideal finitely generated as subalgebra then $I=F$.
\end{itemize}
\end{lemma}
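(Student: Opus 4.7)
The lemma collects two classical facts that the authors cite from the original sources, so my plan is to reproduce the standard arguments. For part (i), the Shirshov--Witt theorem, I would use the PBW-based combinatorial approach: fix a totally ordered free basis $X$ of $F$ together with an associated Hall basis $\mathcal{H}$, so that ordered monomials in $\mathcal{H}$ form a $K$-basis of $\U(F)$ and $\U(F)$ is, as an associative algebra, the free algebra $K\langle X\rangle$. Given a subalgebra $H\leq F$, the first step is to show that $\U(F)$ is free as a right $\U(H)$-module by constructing a Schreier-type transversal $T\subseteq \U(F)$ for the right action of the augmentation ideal of $\U(H)$, using leading-monomial arguments with respect to the PBW ordering. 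From the decomposition $\U(F)\cong T\otimes_K \U(H)$, one then extracts, by induction on the monomial ordering, a family of Lie elements in $H$ whose leading associative monomials realise the ``primitive'' parts of $T$; this family is then shown to be a free Lie basis of $H$.

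For part (ii), due to B.~Baumslag, suppose for contradiction that $0\neq I\lhd F$ is a proper ideal that is finitely generated as a Lie subalgebra. By part (i), $I$ is itself a finitely generated free Lie algebra of some finite rank $r$, so the abelianisation $I^{ab}=I/[I,I]$ has finite $K$-dimension $r$. Setting $Q=F/I$, the plan is to compare this with the Lie-algebra analogue of the Crowell sequence associated to $0\to I\to F\to Q\to 0$, namely the exact sequence
\[
0 \to I^{ab} \to \U(Q)\otimes_K F^{ab} \to \U(Q)^{+} \to 0,
\]
where $\U(Q)^{+}$ is the augmentation ideal and the $\U(Q)$-module structure on $I^{ab}$ comes from the adjoint action of $F$ on $I$ (which factors through $Q$). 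If $I\neq F$ then $Q\neq 0$ so $\U(Q)$ is infinite-dimensional over $K$; since $F$ is finitely generated and free with $\dim_K F^{ab}\geq 2$ (the rank one case being trivial because $F$ is then abelian of dimension one), a Hilbert series comparison on the displayed sequence forces the kernel $I^{ab}$ to be infinite-dimensional, contradicting the finite-dimensionality established above. Hence $I=F$.

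The main obstacles are, respectively, the delicate combinatorial bookkeeping of Hall-word leading terms in part (i), and the clean construction of the Crowell-type sequence together with the Hilbert series comparison in part (ii). Both arguments are standard but technical, which is presumably why the paper invokes \cite{Sirsov,Witt,BBaumslag} as black boxes rather than reproducing the proofs.
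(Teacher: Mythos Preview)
The paper gives no proof of this lemma: it is recorded purely as a citation of the classical results of \v{S}ir\v{s}ov, Witt and B.~Baumslag, and is used as a black box thereafter. So there is nothing in the paper to compare your argument against; your final sentence already anticipates this.

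Your outlines are sound. For (i) you are pointing at the standard Hall-word/leading-monomial proof of the \v{S}ir\v{s}ov--Witt theorem, which is indeed how it is done. For (ii), the exact sequence you write down is correct --- it is precisely the sequence the paper itself derives and exploits later in Lemma~\ref{quotient} --- and the strategy of confronting $\dim_K I^{ab}<\infty$ with this sequence is the right one. One small caution: the phrase ``Hilbert series comparison'' presupposes a grading on $\U(Q)$, but $I$ need not be homogeneous, so $Q$ need not be graded. A cleaner way to finish is to note that $I^{ab}$, being the abelianisation of a nonzero free Lie algebra, is nonzero; it sits as a $\U(Q)$-submodule of the free module $\U(Q)^d$, and since $\U(Q)$ has no zero divisors any nonzero element of $\U(Q)^d$ generates a cyclic submodule isomorphic to $\U(Q)$. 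Thus $\dim_K I^{ab}\geq \dim_K\U(Q)=\infty$ whenever $Q\neq 0$, giving the desired contradiction without any appeal to gradings.
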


Note that if we consider Lie rings  (i.e. $K$ is only a commutative ring and not a field) then a Lie subring of a free Lie ring is  not necessary free (see \cite[ex.~2.4.1]{Bakhturin}).

\subsection {Homology of Lie algebras} 

For a Lie algebra $L$ over a field $K$ the $n$-th homology of $L$ with coefficients in a $\U(Q)$-module $V$ is
$$
H_n(L, V) : = Tor_n^{\U(L)} (K, V),
$$
where we are denoting by $K$ not only the coefficient field but also the trivial $\U(L)$-module i.e. the module on which $L$ acts as 0.
It is easy to see that
$$
H_1(L, K) \simeq L/[L,L].
$$

The following result is a Lie algebra version of the Hopf formula, it is a particular case of a result from  \cite{B-C}.  Alternatively an algebraic argument  using Gruenberg 
resolution can be used  to prove the Hopf formula  for second homology of a group, see \cite{Rotman}. The same argument works in the Lie case.

\begin{lemma} \label{schur} (Hopf type formula)  Let $F$ be a free Lie algebra and $R$ an ideal of $F$. Then
$$
H_2(F/R,K) \simeq R \cap [F,F]/ [R, F].
$$
\end{lemma}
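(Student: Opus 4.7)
The plan is to deduce the formula from the five-term exact sequence in Lie algebra homology associated to the extension $R \rightarrowtail F \twoheadrightarrow F/R$. This sequence is the standard low-degree consequence of the Hochschild--Serre spectral sequence
$$E^2_{p,q} = H_p(F/R, H_q(R, K)) \Rightarrow H_{p+q}(F, K)$$
for Lie algebras (see \cite[Section~7]{Weibel}), and reads
$$H_2(F, K) \to H_2(F/R, K) \to H_0(F/R, H_1(R, K)) \to H_1(F, K) \to H_1(F/R, K) \to 0.$$

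First I would observe that $H_2(F, K) = 0$. Since $F$ is free, $\U(F)$ is a free associative $K$-algebra, and the augmentation ideal $I$ of $\U(F)$ is a free (left) $\U(F)$-module generated by any free basis of $F$; hence $0 \to I \to \U(F) \to K \to 0$ is a length-$1$ projective resolution, giving $H_n(F, K) = 0$ for $n \geq 2$. Next I would identify the third term of the sequence: $H_1(R, K) = R/[R,R]$ is naturally a $\U(F/R)$-module via the adjoint action of $F$ on its ideal $R$ (well-defined on the quotient because $R$ itself acts by inner derivations that land in $[R,R]$). Taking coinvariants kills this action, so
$$H_0(F/R, R/[R,R]) = (R/[R,R])/([F,R]/[R,R]) = R/[F, R].$$

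Combining these two observations with the five-term sequence gives an injection $H_2(F/R, K) \hookrightarrow R/[F,R]$ whose image is the kernel of the subsequent map $R/[F, R] \to F/[F,F]$. By the standard description of the connecting homomorphism in the Hochschild--Serre low-degree sequence, this map is induced by the inclusion $R \hookrightarrow F$; since $[F, R] \subseteq [F, F]$, its kernel is exactly $(R \cap [F,F])/[F,R]$, which yields the desired isomorphism.

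The main obstacle is really just the bookkeeping: invoking the Lie algebra Hochschild--Serre spectral sequence and verifying the explicit form of the connecting map. Both steps are standard and parallel to the group-theoretic argument. If one prefers to avoid the spectral sequence altogether, an alternative is to imitate in the Lie algebra category the Gruenberg resolution argument used in \cite{Rotman} for groups: construct a projective resolution of $K$ over $\U(F/R)$ from $0 \to I \to \U(F) \to K \to 0$ by change of rings, then compute $H_2(F/R,K) = \Tor_2^{\U(F/R)}(K,K)$ directly.
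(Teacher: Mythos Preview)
Your proof via the five-term exact sequence is correct and complete; the vanishing of $H_2(F,K)$, the identification of the coinvariants $H_0(F/R,R/[R,R])$ with $R/[R,F]$, and the computation of the kernel of the map to $F/[F,F]$ are all handled properly. One terminological quibble: the map $R/[R,F]\to F/[F,F]$ in the five-term sequence is not a connecting homomorphism but rather the edge map induced by the inclusion $R\hookrightarrow F$; your description of it is nonetheless right.

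The paper does not actually give a proof of this lemma. It simply notes that the statement is a special case of a result of Bichon--Carnovale \cite{B-C}, and remarks that alternatively the Gruenberg-resolution argument for groups in \cite{Rotman} carries over verbatim to Lie algebras. Your main argument via the Hochschild--Serre five-term sequence is therefore a genuinely different (and self-contained) route, though of course it is the most classical one for Hopf-type formulas; the Gruenberg approach you mention as an alternative at the end is precisely what the paper points to. Either method works equally well here, and both avoid appealing to the more general Hopf-algebra framework of \cite{B-C}.
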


By \cite[theorem.~11.31]{Rotman} we have a K\"unneth type formula
$$
H_n (L_1 \oplus L_2, K) \simeq \oplus_{0 \leq i \leq n} H_i(L_1, K) \otimes_K H_{n-i}(L_2, K)
$$
(the $Tor_1$ part is missing since tensoring over the field $K$ is an exact functor).

\subsection{Homological properties of Lie algebras}
Assume that $L$ is a Lie algebra of type $FP_s$ and consider  a projective resolution of the trivial $\U(L)$-module $K$ $${\mathcal P} : \ldots \to P_i \to P_{i-1} \to \ldots \to P_1 \to P_0 \to K \to 0
$$
such that $P_i$ is a finitely generated projective $\U(L)$-module for $i \leq s$. As in the group case if there is a projective resolution with this property then there is a free resolution with the same property i.e. we can assume that each $P_i$ is a finitely generated free $\U(L)$-module for $i \leq s$.

As we said in the introduction, little is known about the homological property $FP_s$ for Lie algebras. In some cases it is easy to get Lie algebra versions of well known results for groups as in the following versions of \cite[proposition~2.1,proposition~2.2]{Bieribook}.

\begin{lemma} \label{FP1} A Lie algebra $L$ is of type $FP_1$ if and only if $L$ is finitely generated as a Lie algebra.
\end{lemma}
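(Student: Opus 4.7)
My plan is to work through the augmentation ideal $\omega(L) = \ker(\epsilon \colon \U(L) \to K)$. First I will reduce $FP_1$ to the statement that $\omega(L)$ is finitely generated as a left $\U(L)$-module: since $K$ is cyclic over $\U(L)$ one may take $P_0 = \U(L)$ in a partial projective resolution $P_1 \to P_0 \to K \to 0$, and then $FP_1$ becomes exactly the assertion that $\omega(L) = \ker(P_0 \to K)$ is a finitely generated $\U(L)$-module. Moreover, since PBW gives $\omega(L) = \U(L) \cdot L$, any finite set of generators can be rewritten as a finite subset $\{x_1, \ldots, x_n\} \subseteq L$.

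For the direction ``finitely generated $\Rightarrow FP_1$'', I will assume $L$ is generated as a Lie algebra by $x_1, \ldots, x_n$. An induction on bracket depth, using the identity $[a,b] = ab - ba$ inside $\U(L)$, shows that every iterated bracket of the $x_i$ lies in $\sum_i \U(L) x_i$: if $u,v$ already lie there then so do the products $uv$ and $vu$, hence also $[u,v]$. Consequently $L \subseteq \sum_i \U(L) x_i$, and $\omega(L) = \U(L) \cdot L = \sum_i \U(L) x_i$ is finitely generated.

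For the converse I will suppose $\omega(L) = \sum_{i=1}^n \U(L) x_i$ with $x_i \in L$, and let $M$ be the Lie subalgebra of $L$ generated by $\{x_1, \ldots, x_n\}$; the aim is $M = L$. Since $\omega(M) = \U(M) \cdot M$, the hypothesis rewrites as $\omega(L) = \U(L) \cdot \omega(M)$. The crucial tool is PBW: choosing an ordered $K$-basis of $L$ that extends a basis of $M$ and placing the complementary basis elements first in the ordering, $\U(L)$ becomes a free right $\U(M)$-module whose basis $S$ is the set of ordered PBW monomials in the complementary basis (including the empty monomial $1$). Hence $\U(L)/\U(L)\omega(M) \cong \U(L) \otimes_{\U(M)} K$ is a free $K$-module on $S$. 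But by hypothesis this quotient equals $\U(L)/\omega(L) = K$, so $S$ must be one-dimensional; this forces the complementary basis to be empty and therefore $M = L$.

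The main obstacle is precisely this converse. The weaker conclusion that the ideal $I$ of $L$ generated by $M$ equals $L$ follows immediately, by passing to $L/I$ and observing that its augmentation ideal vanishes, but it falls short of $M = L$: simple Lie algebras like $\mathfrak{sl}_2(K)$ admit proper subalgebras that nevertheless generate the whole Lie algebra as an ideal. The PBW freeness of $\U(L)$ as a right $\U(M)$-module is the extra ingredient that promotes the module-theoretic equation $\omega(L) = \U(L)\omega(M)$ to the Lie-algebraic equation $M = L$.
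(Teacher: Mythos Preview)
Your proof is correct and is precisely the Lie algebra translation of the argument the paper has in mind: the authors omit the proof and point to \cite[Proposition~2.1]{Bieribook}, remarking that one need only replace group rings by universal enveloping algebras. Your reduction of $FP_1$ to finite generation of the augmentation ideal, and the PBW freeness of $\U(L)$ as a right $\U(M)$-module to force $M=L$, are exactly the ingredients of that argument; the same PBW-based step (that $\U(L)$ finitely generated over $\U(M)$ forces $M=L$) is invoked later in the paper in the proof of Theorem~\ref{projection} via \cite[Corollary~7.3.9]{Weibel}.
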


Let $F$ be a free Lie algebra and $R$ an ideal of $F$. We define the adjoint action of $f \in F$ on $R$ by $r \circ f = [r,f]$. With this action, $R$ is a (right) $\U(F)$-module and this induces also an action of $\U(L)$ on $R/ [R,R]$ for $L=F/R$. We have, as in the group case.

\begin{lemma}\label{FP2} Let $L=F/R$ with $F$  a finitely generated free Lie algebra  and $R$ an ideal of $F$. Then $L$ is of type $FP_2$ if and only if  $R/ [R, R]$ is finitely generated as $\U(L)$-module via the adjoint $L$-action. \end{lemma}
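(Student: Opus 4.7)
The idea is to build a partial free resolution of $K$ over $\U(L)$ whose first syzygy can be identified with $R/[R,R]$ equipped with the adjoint action, so the $FP_2$ condition translates directly to finite generation of that syzygy.

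First I would fix a free basis $x_1,\ldots,x_n$ of $F$ and record the short free resolution of the trivial $\U(F)$-module
$$0 \to \U(F)^n \to \U(F) \to K \to 0,$$
where the left-hand map sends the $i$-th standard generator to $x_i$ (so its image is the augmentation ideal $I_F$, which is free on $x_1,\ldots,x_n$). Applying $\U(L)\otimes_{\U(F)}-$ yields the four-term exact sequence
$$0 \to \Tor_1^{\U(F)}(\U(L),K) \to \U(L)^n \to \U(L) \to K \to 0,$$
a partial free resolution of $K$ as a $\U(L)$-module whose first syzygy is the $\Tor$-term.

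The main step is to identify this $\Tor_1$ with $R/[R,R]$ as a $\U(L)$-module carrying the adjoint action. For this I would use the alternative short exact sequence $0\to\mathfrak r\to \U(F)\to\U(L)\to 0$, where $\mathfrak r=R\cdot\U(F)=\U(F)\cdot R$ is the two-sided ideal generated by $R$. Applying $-\otimes_{\U(F)}K$ and using that $\mathfrak r\subseteq I_F$ (so the connecting map $\mathfrak r\otimes_{\U(F)}K\to K$ is zero) gives $\Tor_1^{\U(F)}(\U(L),K)\cong \mathfrak r/\mathfrak r I_F$. Then I would define the natural map $R \to \mathfrak r/\mathfrak r I_F$ by $r\mapsto r+\mathfrak r I_F$, checking surjectivity by writing any generator $ru$ of $\mathfrak r$ with $u\in\U(F)$ in the form $\epsilon(u)r + r(u-\epsilon(u))$ and absorbing the second summand into $\mathfrak r I_F$. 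For the kernel, the inclusion $[R,R]\subseteq\ker$ is immediate since $[r,s]=rs-sr\in R\cdot I_F=\mathfrak r I_F$ for $r,s\in R$; the reverse inclusion is the delicate step and follows (as in the classical Magnus/Fox group-theoretic argument) by using the PBW theorem on $\U(F)$ to analyze which elements of $R$ can lie in $\mathfrak r I_F$. Finally, the $\U(L)$-action is identified with the adjoint action via the identity $fr=[f,r]+rf\equiv[f,r]\pmod{\mathfrak r I_F}$, valid because $f\in F\subseteq I_F$ forces $rf\in\mathfrak r I_F$.

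Combining these two steps, we obtain the exact sequence
$$0 \to R/[R,R] \to \U(L)^n \to \U(L) \to K \to 0$$
of $\U(L)$-modules, with the left term carrying the adjoint action. The conclusion is then a routine application of the standard characterization of $FP_2$: since we already have a finite free presentation of $K$ of length one, $L$ is $FP_2$ if and only if the first syzygy $R/[R,R]$ is finitely generated as a $\U(L)$-module.

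\textbf{Main obstacle.} The technical heart of the argument is the isomorphism $\mathfrak r/\mathfrak r I_F\cong R/[R,R]$ with the adjoint action; specifically, proving $R\cap \mathfrak r I_F\subseteq[R,R]$. Here one cannot import the group-theoretic proof verbatim since it relies on the Magnus embedding for free groups, but the PBW theorem plays an analogous role: a careful bookkeeping of leading terms in the PBW basis of $\U(F)$ allows one to conclude that any element of $R$ lying in the image of $\mathfrak r\otimes I_F$ already lies in $[R,R]$. Everything else is formal homological algebra.
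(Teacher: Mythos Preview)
Your argument is correct and is exactly the Bieri-style proof the paper invokes (the paper gives no independent argument, merely citing \cite[Proposition~2.2]{Bieribook} and noting that the group-theoretic proof transfers verbatim with $\U(\cdot)$ in place of group rings). One remark on the step you single out as the main obstacle: the identification $\mathfrak r/\mathfrak r I_F\cong R/[R,R]$ requires no direct PBW bookkeeping, since $\U(F)$ is free as a left $\U(R)$-module (by PBW) and $\U(L)\cong K\otimes_{\U(R)}\U(F)$, so flat base change (Shapiro) gives $\Tor_1^{\U(F)}(\U(L),K)\cong\Tor_1^{\U(R)}(K,K)=H_1(R,K)=R^{ab}$ immediately, with the induced $\U(L)$-action automatically the adjoint one.
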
   

We omit the proofs of Lemma \ref{FP1} and Lemma \ref{FP2} as the proofs of \cite[proposition~2.1]{Bieribook} and \cite[proposition~2.2]{Bieribook} are homological and the same proofs work substituting   group algebras with universal enveloping algebras.
Note that \cite[theorem~1.3]{Bieribook}  applied to the associative ring $\U(L)$ and the trivial $\U(L)$-module $K$ gives immediately:

\begin{theorem}  \label{LieBieri} Let $s \geq 1$ be a natural number.  Then a Lie algebra $L$ is of type $FP_s$ if and only if  $$Tor_k^{\U(L)}(\prod \U(L), K) \simeq  \prod Tor_k^{\U(L)}( \U(L), K) $$ for all $k \leq s-1$ and for every direct product $\prod \U(L)$. In particular,  if $L$ is finitely generated then $L$ is of type $FP_s$ if and only if  $$Tor_k^{\U(L)}(\prod \U(L), K) = 0$$ for all $1 \leq k \leq s-1$ and for every direct product $\prod \U(L)$.
\end{theorem}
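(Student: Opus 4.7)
The plan is to apply \cite[Theorem~1.3]{Bieribook} directly to the associative ring $R = \U(L)$ and to the trivial module $M = K$. Bieri's result, which is stated and proved purely at the level of modules over an arbitrary associative ring with unit, asserts that an $R$-module $M$ is of type $\FP_s$ if and only if the natural map
$$
\Tor_k^R\bigl(\textstyle\prod R,\, M\bigr) \longrightarrow \prod \Tor_k^R(R, M)
$$
is an isomorphism for every $k \leq s-1$ and every direct product of copies of $R$. By definition, $L$ is of type $\FP_s$ precisely when $K$ admits a projective resolution over $\U(L)$ that is finitely generated in degrees $\leq s$, which is the very same condition as $K$ being a $\U(L)$-module of type $\FP_s$ in Bieri's sense. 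Hence the first assertion is an immediate specialization, with no translation required.

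For the \emph{in particular} statement, I would assume that $L$ is finitely generated, equivalently (by Lemma~\ref{FP1}) that $K$ is of type $\FP_1$ as a $\U(L)$-module. For $k \geq 1$ one observes that $\U(L)$ is free (in particular projective) over itself, so $\Tor_k^{\U(L)}(\U(L), K) = 0$ and therefore $\prod \Tor_k^{\U(L)}(\U(L), K) = 0$. The isomorphism in the general criterion then collapses to the vanishing statement $\Tor_k^{\U(L)}(\prod \U(L), K) = 0$ for $1 \leq k \leq s-1$, which is the required equivalent form.

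There is no substantive obstacle here. Bieri's argument is purely formal: it relies only on the existence of finitely generated projective resolutions, on dimension shifting, and on the elementary observations that a module is finitely generated (respectively finitely presented) exactly when tensor product with arbitrary direct products of the base ring commutes with the product in homological degree $0$ (respectively degree $1$). These inputs are available verbatim in the category of $\U(L)$-modules, so the proof of \cite[Theorem~1.3]{Bieribook} goes through unchanged, which is why it suffices to cite the result rather than to reproduce it.
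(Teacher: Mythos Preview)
Your proposal is correct and matches the paper's approach exactly: the paper simply states that \cite[Theorem~1.3]{Bieribook} applied to the associative ring $\U(L)$ and the trivial $\U(L)$-module $K$ gives the result immediately. Your additional explanation of the ``in particular'' clause (using freeness of $\U(L)$ over itself to kill the right-hand side for $k\geq 1$) is a welcome elaboration of what the paper leaves implicit.
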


Using Theorem \ref{LieBieri} and modifying the proof of \cite[proposition~2.7]{Bieribook} by substituting group algebras with enveloping algebras of Lie algebras we obtain the following result.

\begin{theorem} \label{FPinfty} Let $m \geq 1$ be a natural number and let 
$A \to B \to C$ be a short exact sequence of Lie algebras such that $A$ is of type $FP_{\infty}$. Then $B$ is of type $FP_m$ if and only if $C$ is of type $FP_m$.
\end{theorem}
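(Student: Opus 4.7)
The plan is to adapt the homological argument of \cite[proposition~2.7]{Bieribook} to the Lie setting, with Theorem \ref{LieBieri} as the driving criterion. The main tool is the Hochschild--Serre spectral sequence
$$E^2_{p,q} = H_p(C, H_q(A, M)) \Rightarrow H_{p+q}(B, M)$$
associated to $A \rightarrowtail B \twoheadrightarrow C$, valid for any $\U(B)$-module $M$; this is the standard change-of-rings spectral sequence arising from $\U(A) \hookrightarrow \U(B) \twoheadrightarrow \U(C)$, together with the fact that $\U(B)$ is free as a $\U(A)$-module by the Poincar\'e--Birkhoff--Witt theorem.

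For the case $m=1$, Lemma \ref{FP1} reduces the statement to: $B$ is finitely generated if and only if $C$ is. Since $A$ is $FP_\infty$ it is in particular finitely generated, and the equivalence is routine (a generating set of $B$ projects to one of $C$; conversely, lifts of generators of $C$ together with generators of $A$ generate $B$, because the subalgebra they generate contains $A$ and surjects onto $C$).

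For $m \geq 2$, apply the spectral sequence with $M = \prod \U(B)$ an arbitrary direct product. Since $\U(B)$ is $\U(A)$-free one has $H_q(A, \U(B)) = 0$ for $q \geq 1$ and $H_0(A, \U(B)) \simeq K \otimes_{\U(A)} \U(B) \simeq \U(C)$. Because $A$ is of type $FP_\infty$, $K$ admits a resolution by finitely generated free $\U(A)$-modules; consequently $\Tor_q^{\U(A)}(K,-)$ commutes with arbitrary direct products in every degree (tensor product with a finitely generated free module commutes with products, and direct products are exact in $\U(A)$-modules, so the isomorphism passes to homology). Hence
$$H_q(A, \prod \U(B)) \simeq \prod H_q(A, \U(B))$$
vanishes for $q \geq 1$ and equals $\prod \U(C)$ for $q = 0$. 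The spectral sequence collapses onto the bottom row and produces a natural isomorphism
$$H_p(B, \prod \U(B)) \simeq H_p(C, \prod \U(C))$$
for every $p \geq 0$. By Theorem \ref{LieBieri} (together with the $m=1$ step already established), $B$ is of type $FP_m$ if and only if $H_p(B, \prod \U(B)) = 0$ for all $1 \leq p \leq m-1$ and every direct product, and similarly for $C$; the displayed isomorphism identifies the two conditions, completing the proof.

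The main technical step, and the only part requiring genuine care, is the commutation of $\Tor_q^{\U(A)}(K,-)$ with arbitrary direct products under the $FP_\infty$ hypothesis; this rests on the standard fact that tensoring with a finitely generated free module commutes with direct products, coupled with the exactness of direct products in the module category. The Hochschild--Serre spectral sequence for Lie algebras, as well as the freeness of $\U(B)$ over $\U(A)$, are classical and require no new ingredients beyond those already in play.
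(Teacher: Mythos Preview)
Your argument is correct and is precisely the adaptation of \cite[proposition~2.7]{Bieribook} that the paper invokes: the paper gives no details beyond pointing to Bieri's proof and Theorem \ref{LieBieri}, and you have simply written out that adaptation explicitly via the Hochschild--Serre spectral sequence and the $FP_\infty$-commutation of $\Tor$ with products. There is nothing to add.
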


\section{Modules of free Lie algebras with finite dimensional first homology}

In this section we prove a technical result that we will use later.

\begin{lemma}\label{vanishing} Let $F$ be a free Lie algebra with universal algebra $\U(F)$ and $V$ a left  $\U(F)$-module. Assume that there is some $f\in\U(F)\setminus\{0\}$ such that  $\dim fV<\infty$ and that $\dim\Ho_1(F,V)<\infty$. Then $\dim V<\infty$.
\end{lemma}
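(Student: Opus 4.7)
The plan is to exploit the fact that a free Lie algebra has cohomological dimension at most one, which gives a length-one free resolution of the trivial $\U(F)$-module $K$. If $X$ is a free basis of $F$, the augmentation ideal of $\U(F)$ is free as a right $\U(F)$-module on $X$, so we have
\[
0 \to \bigoplus_{x\in X}\U(F) \xrightarrow{\partial} \U(F) \to K \to 0
\]
with $\partial$ sending the generator of the $x$-summand to $x\in\U(F)$. Applying $-\otimes_{\U(F)} V$ produces the four-term exact sequence
\[
0 \to H_1(F,V) \to V^{(X)} \xrightarrow{d} V \to H_0(F,V) \to 0,
\]
where $d((v_x)_x)=\sum_x xv_x$, so $\operatorname{im}(d)=FV$. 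This is the main tool.

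From the relations $\dim V^{(X)} = \dim H_1(F,V)+\dim FV$ and $\dim V = \dim FV+\dim H_0(F,V)$ read off from the sequence, one extracts the identity
\[
(|X|-1)\dim V + \dim H_0(F,V) = \dim H_1(F,V).
\]
When $|X|\ge 2$ (finite), the hypothesis $\dim H_1(F,V)<\infty$ immediately forces $\dim V<\infty$, and interestingly the $fV$ hypothesis is not needed at all. When $|X|$ is infinite, any nonzero $V$ makes $V^{(X)}$ infinite dimensional while $\dim FV\le\dim V$, which forces $\dim H_1(F,V)=\infty$, contradicting the hypothesis; so $V=0$.

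The case $|X|=1$ is where the above identity degenerates to $\dim H_0(F,V)=\dim H_1(F,V)$ and gives no control on $\dim V$; this is where the $fV$ hypothesis becomes essential. Here $\U(F)=K[x]$ and $H_1(F,V)=\ker(x\colon V\to V)$. The approach is first to use $\dim fV<\infty$ to conclude that $V$ is a torsion $K[x]$-module (since inverting the nonzero element $f$ shows $V\otimes_{K[x]}K(x)$ is a quotient of the finite dimensional space $fV\otimes K(x)=0$), and then to decompose $V=\bigoplus_p V_{(p)}$ into its $p$-primary components over the irreducibles of $K[x]$. For $p\nmid f$, the element $f$ acts invertibly on $V_{(p)}$, so $V_{(p)} = f V_{(p)}\subseteq fV$ is finite dimensional; for $p=x$, the bound $\dim\ker x^k\le k\dim\ker x$ combined with $\dim x^kV_{(x)}\le\dim fV$ controls $\dim V_{(x)}$.

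The main obstacle is this rank-one case: unlike $|X|\ge 2$, one cannot simply read the bound off from a two-term resolution, and one must coordinate both finiteness hypotheses across infinitely many primary components simultaneously, being especially careful with primes $p\mid f$ other than $x$, where the $\ker x$ bound is empty and the argument must rely on extra structural input from $V_{(p)}$ being a module over the Artinian quotient $K[x]/(p^N)$.
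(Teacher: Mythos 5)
Your main case ($|X|\ge 2$) rests on reading the identity $(|X|-1)\dim V+\dim H_0(F,V)=\dim H_1(F,V)$ off the four-term exact sequence and concluding that finiteness of $\dim H_1(F,V)$ alone forces finiteness of $\dim V$. That is an Euler-characteristic computation which is only valid once all the dimensions involved are already known to be finite; it cannot be used to \emph{prove} $\dim V<\infty$. If $\dim V=\kappa$ is infinite, the exact sequence only yields the cardinal identities $|X|\cdot\kappa=\dim H_1(F,V)+\dim FV$ and $\kappa=\dim FV+\dim H_0(F,V)$, and since $|X|\cdot\kappa=\kappa$ for finite $|X|$ these are perfectly consistent with $\dim H_1(F,V)<\infty$ (the subtraction step needed to get your displayed identity is illegitimate for infinite cardinals). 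Concretely, $V=\U(F)$ with $F$ free of rank $\ge 2$ is a flat module, so $H_1(F,V)=\Tor_1^{\U(F)}(K,\U(F))=0$, yet $\dim V=\infty$: your assertion that ``the $fV$ hypothesis is not needed at all'' is false, and the case your argument skips is precisely the one that carries the whole weight of the lemma in its application (non-abelian $F$). The same example defeats the infinite-rank paragraph, where your inequality only rules out \emph{finite-dimensional} nonzero $V$.

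The rank-one analysis is more careful but also does not close: for an irreducible $p\mid f$ with $p\neq x$, the element $x$ acts invertibly on $V_{(p)}$, so $\ker x$ gives nothing, and $\dim fV_{(p)}=\dim p^mV_{(p)}<\infty$ bounds the image of $p^m$ but not $\ker p^m$; being a module over an Artinian quotient does not bound dimension. Indeed $V=\bigoplus_{i\in\N}K[x]/(x-1)$ with $f=x-1$ satisfies both hypotheses and is infinite dimensional, so no argument can rescue that subcase (the lemma is only ever applied with $F$ non-abelian). The missing idea, which is what the paper actually does, is to make the element $f$ do the work in every rank: write $f=l+\sum_i a_{j_i}f_{j_i}$ in $\U(F)$; the finiteness of $\ker\varphi=H_1(F,V)$ in your four-term sequence shows that if $f$ has \emph{no constant term} and $\dim fV<\infty$, then $\dim f_{j_i}V<\infty$ for each $i$ (the tuple $(f_{j_i}v)_i$ is determined by $fv$ up to the finite-dimensional $\ker\varphi$); the constant term is eliminated by replacing $f$ with $[a_{j_i},f]$, whose image of $V$ is still finite dimensional, and an induction on the length of the longest monomial of $f$ descends to a nonzero element of shorter length and eventually to a nonzero scalar, giving $\dim V<\infty$.
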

\begin{proof} To compute the homology of $F$ we may use the standard free resolution of right $\U(F)$-modules
$$0\to\oplus\U(F)\to\U(F)\to K \to 0$$
where the cardinality of the direct sum is the rank of $F$. 
Let $\{ a_1,\ldots,a_j,\ldots \}_{j \in J}$ be a basis of $F$ as a free Lie algebra. We denote 
$$\oplus\U(F)=\oplus_{j} e_j\U(F)$$ and then the map $\oplus\U(F)\to\U(F)$ is given by $e_j\mapsto a_j$. The homology of $F$ with coefficients in $V$ is the homology of the sequence obtained after applying the functor $- \otimes_{\U(F)}V$ to the deleted free resolution, i.e., 
$$0\to\oplus_{j \in J} e_jV\buildrel \varphi\over\to V \to 0$$
thus 
\begin{equation} \label{ker-dim} \dim\Ho_1(F,V)=\dim\ker\varphi<\infty.
\end{equation}

We write $f$ associatively, i.e., opening all possible brackets via $[a,b]=ab-ba$. Then for some natural number $q$ and some $j_1,\ldots, j_q \in J$
 we get
$$f=l+a_{j_1}f_{j_1}+\ldots+a_{j_q}f_{j_q}$$
with $l\in K$ and $f_{j_i} \in\U(F) \setminus \{ 0\}$, otherwise $f = l$ and  there would be nothing to prove. We say that $f$ has no constant term if $l = 0$.

\medskip
{\bf Claim.} {\it  If $f$ has no constant term then
 $\dim f_{j_i} V<\infty$ for any $1\leq i\leq q$.}
 
  \medskip
  {\bf Proof of the Claim:}  
Any element of the form
$$e_{j_1}f_{j_1}v+\ldots+e_{j_q}f_{j_q}v$$
with $v\in V$ is mapped to 
$fv\in fV$
under $\varphi$, thus $\varphi$ induces a linear map
$$\{e_{j_1}f_{j_1}v+\ldots+e_{j_q}f_{j_q}v:v\in V\}\to fV.$$
Observe that the target has finite dimension and the kernel lies inside $\ker\varphi$ thus by (\ref{ker-dim}) it has finite dimension too. We deduce that
$$\dim\{e_{j_1}f_{j_1}v+\ldots+e_{j_q}f_{j_q}v:v\in V\}<\infty.$$
Observe that this set maps epimorphically onto $f_{j_i}V$. Therefore $\dim f_{j_i}V<\infty$ for any $1 \leq i \leq q$. This completes the proof of the claim.

\medskip
To prove the result we come back to the general case when
$$f=l+a_{j_1}f_{j_1}+\ldots+a_{j_q}f_{j_q}$$
and argue by induction on the length $d$ of the longest monomial in $f$ as an element of the free associative algebra $\U(F) = k[a_1, a_2, \ldots, a_j, \ldots]$.  Observe that the length $d$ is strictly  bigger than the length of the longest monomial  of each $f_i$. Pick some $a_{j_i}$, $1\leq i\leq q$ and note that
\begin{equation} \label{inclusion1} [a_{j_i},f]V=\{a_{j_i}fv-fa_{j_i}v : v\in V\}\subseteq a_{j_i}fV+fV.\end{equation}
Since $\dim f V < \infty$, (\ref{inclusion1}) implies that 
\begin{equation} \label{dim01} \dim [a_{j_i},f]V \leq 2 \dim f V <\infty.\end{equation}
Note that
$$[a_{j_i},f]=
a_{j_i}( \sum_{s=1}^q
 a_{j_s}  f_{j_s} )- \sum_{s=1}^q  a_{j_s}f_{j_s}a_{j_i} = 
a_{j_i}( - f_{j_i} a_{j_i} + \sum_{s=1}^q
 a_{j_s}  f_{j_s} )- \sum_{s=1\atop s\neq i}^q  a_{j_s}f_{j_s}a_{j_i}$$
As $[a_{j_i},f]$ has no constant  term, using the Claim we see that
$$\dim f_{j_s}a_{j_i}V<\infty \hbox{ for each } 1\leq s\leq q, s\neq i.$$ 
Note that as the universal enveloping algebra of a Lie algebra does not have  zero-divisors (in our case even $\U(F)$ is a free associative algebra)
we deduce that $f_{j_s} a_{j_i} \not= 0 $ in $\U(F)$. Since $f_{j_s} a_{j_i}$ has no constant term  we may use the Claim again and deduce that there is some $\lambda\in \U(F) \setminus \{  0 \}$ with $\dim \lambda V<\infty$ and such that the length of the longest monomial in $\lambda$ is strictly shorter than $d=$  the length of the longest monomial  in $f$. So by induction we have $\dim V<\infty.$
\end{proof}

\section{Finitely presented subdirect sums of free Lie algebras}

This section contains the proofs of Theorems A and B. We begin with a version of Theorem A for the case when $s=2$ (but observe that the hypothesis here is slightly weaker). Recall that for a Lie algebra $L$, as for groups, we set $\gamma_1(L)=L$ and $\gamma_j(L)=[L,\gamma_{j-1}(L)]$.

\begin{theorem} \label{nilpotent} Let $L \leq F_1 \oplus \ldots \oplus F_k$ be a subdirect sum of finitely generated free non-abelian Lie algebras $F_1, \ldots, F_k$ such that $L\cap F_i\neq 0$ for $1\leq i\leq k$, $L$ is finitely generated and $\dim H_2(L, K) < \infty$. Then

a) for every $1 \leq i < j \leq k$ we have that $p_{i,j}(L) = F_i \oplus F_j$;

b) for every $ 1 \leq i \leq k$ we have that $\gamma_{k-1}(F_i) \subseteq L$.
\end{theorem}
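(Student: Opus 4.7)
The plan is to prove (a) first, reducing the general $k$ to the case $k = 2$, and then to deduce (b) by induction on $k$ using (a). For the reduction, given $1 \leq i < j \leq k$, set $M = p_{i,j}(L) \leq F_i \oplus F_j$. This $M$ is a finitely generated subdirect sum of two finitely generated non-abelian free Lie algebras with $M \cap F_l \supseteq L \cap F_l \neq 0$ for $l \in \{i, j\}$. Applying the 5-term exact sequence in homology to $\ker(L \twoheadrightarrow M) \rightarrowtail L \twoheadrightarrow M$ and invoking $\dim H_2(L, K) < \infty$ and $\dim H_1(L, K) < \infty$, one obtains $\dim H_2(M, K) < \infty$. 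Hence it suffices to handle the binary case.

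For the binary case $M \leq F_1 \oplus F_2$, put $A = M \cap F_1$ and $B = M \cap F_2$. The spectral sequence $E_2^{p,q} = H_p(F_2, H_q(A, K)) \Rightarrow H_{p+q}(M, K)$ attached to $A \rightarrowtail M \twoheadrightarrow F_2$ is supported on $p, q \in \{0, 1\}$ since $A$ (which is free by Lemma \ref{propfree}(i)) and $F_2$ are both free Lie algebras; all higher differentials vanish, so it collapses at $E_2$. Reading off degrees $2$ and $1$, one gets $H_2(M, K) \cong H_1(F_2, A/[A,A])$ and $H_0(F_2, A/[A,A]) \hookrightarrow H_1(M, K)$, so both homology groups of the $\U(F_2)$-module $A/[A,A]$ are finite-dimensional. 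The $F_2$-module structure sends $y \in F_2$ to the action $a \mapsto [x, a]$ for any lift $(x, y) \in M$. The key trick: pick $0 \neq y_0 \in B$; then the lift $(0, y_0) \in M$ exhibits $y_0$ as acting trivially on $A/[A,A]$. Lemma \ref{vanishing}, applied with $F = F_2$, $V = A/[A,A]$ and $f = y_0 \in \U(F_2) \setminus \{0\}$, yields $\dim A/[A,A] < \infty$. Since $A$ is free, this makes $A$ finitely generated as a subalgebra, and Lemma \ref{propfree}(ii) then forces the nonzero ideal $A$ of $F_1$ to be all of $F_1$. Symmetrically $B = F_2$, whence $M = F_1 \oplus F_2$.

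For (b), I induct on $k$. The base $k = 2$ is (a), since $\gamma_1(F_i) = F_i$. For the inductive step, put $\tilde L = L/(L \cap F_k) \cong p_{1, \ldots, k-1}(L) \leq F_1 \oplus \ldots \oplus F_{k-1}$; this is a finitely generated subdirect sum with $\tilde L \cap F_i \neq 0$ for $i < k$, and a further 5-term argument gives $\dim H_2(\tilde L, K) < \infty$. The inductive hypothesis therefore yields $\gamma_{k-2}(F_i) \subseteq \tilde L$ for each $i < k$. Fix such $i$ and an element $v = [a, w]$ with $a \in F_i$, $w \in \gamma_{k-2}(F_i)$. Lift $w \in \tilde L$ back to $L$ as an element whose only nonzero coordinates are $w$ in slot $i$ and some $z_w \in F_k$ in slot $k$; by part (a) we have $p_{i,k}(L) = F_i \oplus F_k$, so lift $a$ to an element of $L$ whose $k$-th coordinate is zero and whose $i$-th coordinate is $a$. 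The Lie bracket of these two lifts vanishes in every slot but the $i$-th (the $k$-th slot dies because one factor is zero there, and every other slot dies because the other factor is zero there) and equals $v$ in slot $i$; hence $v \in L$. As $(a, w)$ varies such brackets span $\gamma_{k-1}(F_i) = [F_i, \gamma_{k-2}(F_i)]$, so $\gamma_{k-1}(F_i) \subseteq L$. The case $i = k$ is handled symmetrically using $\tilde L = L/(L \cap F_1)$ together with (a) for the pair $(1, k)$.

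The crux of the argument --- and the main obstacle --- is the binary case, whose success depends on producing an element $f \in \U(F_2) \setminus \{0\}$ annihilating $A/[A,A]$. The hypothesis $L \cap F_2 \neq 0$ supplies such an element essentially for free, after which Lemma \ref{vanishing} does all the work. Everything else --- the spectral sequence collapse and the inductive lifting in (b) --- is routine bookkeeping around this one observation.
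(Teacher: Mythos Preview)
Your core idea --- run the Lyndon--Hochschild--Serre spectral sequence for an extension with free quotient, then feed Lemma~\ref{vanishing} the element of $L\cap F_j$ that acts trivially --- is exactly the paper's. The problem is the reduction step. The five-term sequence for $N\rightarrowtail L\twoheadrightarrow M$ reads
\[
H_2(L,K)\to H_2(M,K)\to N/[N,L]\to H_1(L,K)\to H_1(M,K)\to 0,
\]
so from $\dim H_2(L,K)<\infty$ and $\dim H_1(L,K)<\infty$ you only get that $H_2(M,K)$ sits in an extension of a subspace of $N/[N,L]$ by a finite-dimensional space. You have no control over $N/[N,L]$: nothing in the hypotheses bounds it, and in general a quotient of a Lie algebra with finite-dimensional $H_2$ need not have finite-dimensional $H_2$ (take $L$ free and $M$ any finitely generated Lie algebra that is not of type $FP_2$). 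The same gap reappears in your inductive step for (b), where you need $\dim H_2(\tilde L,K)<\infty$.

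The paper sidesteps this entirely by \emph{not} reducing to the binary case. It applies your spectral-sequence argument directly to the extension $\Delta_j\rightarrowtail L\twoheadrightarrow F_j$, where $\Delta_j=\ker(L\to F_j)$. Since $F_j$ is free, $E^2_{p,q}=0$ for $p\ge 2$, the spectral sequence collapses, and $\dim H_2(L,K)<\infty$ forces $\dim H_1(F_j,\Delta_j^{ab})<\infty$. Any nonzero $c\in L\cap F_j$ centralises $\Delta_j$ (just as in your binary argument), so Lemma~\ref{vanishing} gives $\dim\Delta_j^{ab}<\infty$. Then $p_i(\Delta_j)$ is a nonzero ideal of $F_i$ with finite-dimensional abelianisation, hence finitely generated, hence equal to $F_i$ by Lemma~\ref{propfree}(ii); together with $p_j(L)=F_j$ this gives $p_{i,j}(L)=F_i\oplus F_j$. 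Note that one does \emph{not} need $\Delta_j$ to be free here --- only that the quotient $F_j$ is.

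For (b) the paper again avoids induction: once (a) is known, for each $j\neq 1$ pick $a_j\in L$ with $p_1(a_j)=f_j$ and $p_j(a_j)=0$; the iterated bracket $[a_2,\ldots,a_k]$ then lies in $F_1\cap L$ with first coordinate $[f_2,\ldots,f_k]$, giving $\gamma_{k-1}(F_1)\subseteq L$ directly. Your inductive lifting for (b) is correct in spirit, but it rests on the same unjustified passage to $\tilde L$.
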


\begin{proof} a) We will show that $p_{k-1, k}(L) = F_{k - 1} \oplus F_k$. Let $\Delta_k$ be the kernel of the projection $L \to F_k$.

It is easy to see that $p_{k-1}(\Delta_k) $ is an ideal of $F_{k-1}$. Indeed if $f \in p_{k-1}(\Delta_k) $ we choose $a \in \Delta_k$ such that $p_{k-1}(a) = f$. Let $\alpha \in F_{k-1}$ then there exists $b \in L$ be such that $p_{k-1}(b) = \alpha$.  Then $[f, \alpha] = p_{k-1}([a,b]) \in p_{k-1}(\Delta_k)$ as claimed. 
Note that $p_{k-1}(\Delta_k) $ is a non-trivial ideal of $F_{k-1}$ as it contains $L \cap F_{k-1}$.
 We are going to show  by a homological argument that the abelianization of $\Delta_k$ is finite dimensional, hence the abelianization of $p_{k-1}(\Delta_k)$ is finite dimensional. Since a free Lie algebra is finitely generated if and only if its abelianization is finite dimensional we deduce that $p_{k-1}(\Delta_k)$ is finitely generated. Then by Lemma \ref{propfree} (ii) $p_{k-1}(\Delta_k) = F_{k-1}$.

To show that the abelianization of $\Delta_k$  is finite dimensional consider  the Lyndon-Hochschild-Serre spectral sequence for Lie algebras \cite[Section~7.5]{Weibel} for the short exact sequence of Lie algebras
$$
\Delta_k \rightarrowtail L \epi F_k. 
$$
Then $E^2_{p,q} = H_p (F_k, H_q( \Delta_k, K))$. Since $F_k$ is a free Lie algebra we have that $H_p(F_k,  - ) $$ = 0$ for $p \geq 2$. Hence $E^2_{p,q} = 0$ for $p \geq 2$ and the spectral sequence collapses and $E^2_{p,q} = E^{\infty}_{p,q}$ and $H_{s}(L, K)$ has a filtration with quotients $E^{\infty}_{p,q}$ for $p + q = s$.
Since $H_2(L, K)$ is finite dimensional we deduce that $E^2_{p,q}$ is finite dimensional  for $p + q = 2$, in particular
\begin{equation} \label{homology01} 
\dim H_1(F_k, H_1(\Delta_k, K))< \infty.
\end{equation}
Note that for every $c \in F_k \cap L$ we have $$[\Delta_k, c] \subseteq [ F_1 \oplus \ldots \oplus F_{k-1}, F_k] = 0,$$ 
hence $c$ centralizes $\Delta_k$ and so $c$ acts trivially on  $H_1(\Delta_k, K)$ i.e. acts as $0$.
Since $F_k \cap L \not= 0$ by (\ref{homology01})  and Lemma \ref{vanishing} applied for $f = c \in (F_k \cap L) \setminus \{ 0 \}$ we deduce that $\dim H_1(\Delta_k, K) < \infty$, as required.

b) By part a) we have that $p_{1, i}(L) = F_1 \oplus F_i$ for every $i \geq 2$. Let $f_1, \ldots , f_{k-1}$ be elements of $F_1$ and let $a_1, \ldots, a_{k-1}$ be elements of $L$ such that $p_{1,i}(a_i) = (f_i, 0) \in F_1 \oplus F_i$. Then the left normed Lie bracket $a =[a_1, \ldots, a_{k-1}]$ has the property that $p_i(a) = 0$ for $i \geq 2$, hence $a \in F_1 \cap L$. On the other hand $a = p_1(a) = [p_1(a_1), \ldots, p_1(a_{k-1})] = [f_1, \ldots, f_{k-1}]$, so $\gamma_{k-1}(F_1) \subseteq L$.

\end{proof}

\begin{lemma} \label{quotient} 
Let $F$ be a non-abelian, finitely generated  free Lie algebra and $N = \gamma_{k-1}(F)$. Then $N^{ab} = N/ [N,N]$ is a finitely generated $\U(Q)$-module that contains a non-trivial cyclic free $\U(Q)$-submodule, where $Q = F/ N$.
\end{lemma}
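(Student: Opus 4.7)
The plan is to embed $N^{ab}$ as a $\U(Q)$-module into a finitely generated free $\U(Q)$-module and then exhibit an explicit element whose $\U(Q)$-annihilator vanishes.

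Let $X=\{x_1,\ldots,x_d\}$ be a free basis of $F$; since $F$ is non-abelian, $d\geq 2$. A standard consequence of PBW (the Lie analogue of Nielsen--Schreier) is that the augmentation ideal $\mathfrak a$ of $\U(F)$ is a free left $\U(F)$-module on $X$, and hence $K$ admits the short free resolution
$$0\to \U(F)^{d}\to \U(F)\to K\to 0, \qquad e_i\mapsto x_i.$$
Because $\U(F)$ is a free $\U(N)$-module and $\U(Q)=K\otimes_{\U(N)}\U(F)$, Shapiro's lemma for Lie algebras identifies $\Tor_1^{\U(F)}(\U(Q),K)\cong \Ho_1(N,K)=N^{ab}$. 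Applying $\U(Q)\otimes_{\U(F)}-$ to the resolution produces the exact sequence of left $\U(Q)$-modules
$$0\to N^{ab}\to \U(Q)^{d}\to \U(Q)\to K\to 0,$$
in which the embedding sends the class of $n\in N\subset \mathfrak a$, uniquely written as $n=\sum_i a_ix_i$ with $a_i\in\U(F)$, to $(\bar a_1,\ldots,\bar a_d)$; the induced $\U(Q)$-action on $N^{ab}$ matches the adjoint action by naturality of Shapiro's isomorphism.

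The quotient $Q=F/\gamma_{k-1}(F)$ is a finitely generated nilpotent Lie algebra, hence finite dimensional, so by the standard PBW-filtration argument $\U(Q)$ is (left and right) Noetherian and is an integral domain. Thus $\U(Q)^d$ is a Noetherian $\U(Q)$-module and its submodule $N^{ab}$ is finitely generated.

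For the free cyclic submodule, take $n=\operatorname{ad}(x_2)^{k-2}(x_1)=[x_2,[x_2,\ldots,[x_2,x_1]\cdots]]\in \gamma_{k-1}(F)=N$. Writing $\operatorname{ad}(x_2)=\ell_{x_2}-r_{x_2}$ in $\U(F)$ and noting that the two operators commute, the binomial theorem gives
$$n=\sum_{j=0}^{k-2}(-1)^{k-2-j}\binom{k-2}{j}x_2^{j}x_1x_2^{k-2-j}.$$
Only the summand with $j=k-2$ ends in $x_1$, so in the decomposition $n=\sum_i a_ix_i$ one has $a_1=x_2^{k-2}$, whose image $\bar x_2^{k-2}$ is non-zero in $\U(Q)$ because $x_2\notin\gamma_{k-1}(F)$ (for $k\geq 3$) and $\U(Q)$ is a domain. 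If $\bar q\in\U(Q)$ annihilates the image of $n$ in $\U(Q)^d$, then $\bar q\bar x_2^{k-2}=0$, and the domain property forces $\bar q=0$; hence $n$ generates a free cyclic $\U(Q)$-submodule of $N^{ab}$. The only delicate point is verifying that the $\U(Q)$-action coming from Shapiro's isomorphism agrees with the adjoint action, which is routine by naturality; once that is in place, everything reduces to Noetherianity of $\U(Q)$ plus a single Fox-derivative computation (the degenerate case $k=2$, where $Q=0$ and $\U(Q)=K$, is handled uniformly by reading the empty product $x_2^{0}$ as $1$).
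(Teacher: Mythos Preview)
Your argument is correct. The first half—embedding $N^{ab}$ into $\U(Q)^d$ via the short free resolution of $K$ and invoking Noetherianity of $\U(Q)$ (since $Q$ is finite dimensional)—is exactly what the paper does.

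For the free cyclic submodule, however, you take a genuinely different route. The paper argues non-constructively: since $Q$ is nilpotent, $\U(Q)$ is an Ore domain, so one can form the classical division ring of fractions $\mathcal F(Q)$, which is flat over $\U(Q)$; tensoring the exact sequence $0\to N^{ab}\to\U(Q)^d\to\U(Q)\to K\to 0$ with $\mathcal F(Q)$ yields $N^{ab}\otimes_{\U(Q)}\mathcal F(Q)\cong\mathcal F(Q)^{d-1}$, and since $d\geq 2$ any element $m\in N^{ab}$ with $m\otimes 1\neq 0$ generates a free cyclic submodule. You instead exhibit the explicit element $n=\operatorname{ad}(x_2)^{k-2}(x_1)$ and read off its first Fox coordinate $\bar x_2^{\,k-2}$, then use only that $\U(Q)$ is a domain. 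Your approach is more elementary in that it avoids Ore localisation entirely, at the price of a small explicit computation; the paper's approach is more conceptual and gives slightly more, namely the $\mathcal F(Q)$-rank of $N^{ab}$. The point you flag as ``delicate''—that the $\U(Q)$-action on $N^{ab}$ coming from the embedding into $\U(Q)^d$ agrees with the adjoint action—is indeed standard (and is used implicitly in the paper as well); a one-line check on generators as you indicate suffices.
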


\begin{proof} Let
$$
{\mathcal R} : 0 \to \U(F)^d \to \U(F) \to K \to 0
$$ be the standard free resolution of the trivial right  $\U(F)$-module $K$, where $d$ is the cardinality of a free generating set of $F$. Apply the functor $ - \otimes_{\U(N)} K$ to  the complex ${\mathcal R}$ and we obtain the complex
$$
{\mathcal U} = {\mathcal R} {\otimes}_{\U(N)} K : 0 \to \U(Q)^d \to \U(Q) \to K \to 0.
$$
Note that ${\mathcal U}$ is exact in dimensions -1 and 0 and
$$
H_1({\mathcal U}) \simeq H_1(N,K) = N/ [N, N].
$$

At this point, observe that as $Q$ is a finitely generated nilpotent Lie algebra, then $Q$ is finite dimensional. By \cite[proposition~6~of~I.2.6]{B} the universal enveloping algebra of any finite dimensional Lie algebra is Noetherian, in particular $\U(Q)$ is Noetherian. Thus as $H_1(N,K)$ is a submodule of $\U(Q)^d$ we deduce that it is finitely generated as $\U(Q)$-module.
We have an exact complex of right $\U(Q)$-modules
\begin{equation} \label{exact1}
0 \to N^{ab} \to  \U(Q)^d \to \U(Q) \to K \to 0.
\end{equation}
Recall that the universal algebra of a Lie algebra does not have zero-divisors.
Observe that since $Q$ is a nilpotent Lie algebra $\U(Q)$ is an Ore ring i.e. for any $a,b \in \U(Q) \setminus \{0 \}$ we have that $a (\U(Q) \setminus \{ 0 \}) \cap b( \U(Q) \setminus \{ 0 \}) \not= \emptyset$.  Indeed by \cite[proposition~10.25]{bookLam} if the universal algebra of a Lie algebra is not Ore it contains  a non-commutative free  algebra, which  is impossible in our case since $Q$ is a nilpotent Lie algebra. Thus there exists the Ore division ring of fractions    $\mathcal F(Q) = \U(Q) (\U(Q) \setminus \{ 0 \})^{-1}$, which is a division $K$-algebra. Since $\mathcal F(Q)$ is a direct limit of the directed set of (left) $\U(Q)$-modules $ \U(Q) r^{-1}$ for $r \in \U(Q) \setminus \{ 0 \}$ and each $\U(Q)$-module $\U(Q) r^{-1}$ is free cyclic, so flat, we deduce that $\mathcal F(Q)$ is a flat (left ) $\U(Q)$-module
and the functor $- \otimes_{\U(Q)} {\mathcal F}(Q)$ is exact. Thus
applying $- \otimes_{\U(Q)} {\mathcal F}(Q)$  to the exact complex (\ref{exact1}) we obtain an exact complex
$$
0 \to N^{ab} \otimes_{\U(Q)} {\mathcal F}(Q) \to {\mathcal F}(Q)^d \to {\mathcal F}(Q) \to 0 \to 0.
$$
Hence 
$$N^{ab} \otimes_{\U(Q)} {\mathcal F}(Q) \simeq	 {\mathcal F}(Q)^{d-1}.$$ Since $d \geq 2$ we are done. Indeed if $m \in N^{ab}$ has the property that $m \otimes 1 \not= 0$ in $N^{ab} \otimes_{\U(Q)} {\mathcal F}(Q)$ then for every $\lambda \in \U(Q) \setminus \{ 0 \}$ we have $m \lambda \not= 0$ in $N^{ab}$ otherwise $m \otimes 1 = m \lambda \otimes \lambda^{-1} = 0 \otimes \lambda^{-1} = 0$, a contradiction.
\end{proof}

\begin{theorem} \label{projection} Let $s \geq 2$ be a natural number and $$L \leq F_1 \oplus \ldots \oplus F_k$$ be a subdirect sum of finitely generated, non-abelian, free Lie algebras $F_1, \ldots, F_k$ such that $L\cap F_i\neq 0$ for $1\leq i\leq k$ and $L$ is of type $FP_s$. Then for every $ 1 \leq i_1 <  i_2 < \ldots < i_s \leq k$ we have that
$p_{i_1, \ldots, i_s}(L) = F_{i_1} \oplus \ldots \oplus F_{i_s}$.
\end{theorem}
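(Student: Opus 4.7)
The plan is to argue by induction on $s$. The base case $s=2$ is provided by Theorem \ref{nilpotent}(a), since $L$ of type $FP_2$ is finitely generated and has $\dim H_2(L,K)<\infty$. For the inductive step with $s \geq 3$, by symmetry it suffices to show $p_{1,\ldots,s}(L) = F_1 \oplus \ldots \oplus F_s$. The induction hypothesis yields $p_{1,\ldots,s-1}(L) = F_1 \oplus \ldots \oplus F_{s-1}$, so setting
$$\Delta := L \cap (F_s \oplus F_{s+1} \oplus \ldots \oplus F_k) = \ker\bigl(L \epi F_1 \oplus \ldots \oplus F_{s-1}\bigr),$$
the desired equality reduces to showing $p_s(\Delta) = F_s$: any target $(f_1,\ldots,f_s)\in F_1\oplus\ldots\oplus F_s$ is reached by lifting $(f_1,\ldots,f_{s-1})$ to some $\ell \in L$ and then correcting the $F_s$-component by an element of $\Delta$.

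The next step is to verify that $p_s(\Delta)$ is a nonzero ideal of the free Lie algebra $F_s$: it is nonzero because it contains $p_s(L\cap F_s)=L\cap F_s\neq 0$, and an ideal by the same bracket manipulation used in Theorem \ref{nilpotent}(a). By Lemma \ref{propfree}(ii) it then suffices to check that $p_s(\Delta)$ is finitely generated as a subalgebra; since $p_s(\Delta)$ is itself free by Lemma \ref{propfree}(i), this amounts to $\dim p_s(\Delta)/[p_s(\Delta),p_s(\Delta)]<\infty$. The canonical surjection $H_1(\Delta,K)=\Delta/[\Delta,\Delta] \epi p_s(\Delta)/[p_s(\Delta),p_s(\Delta)]$ induced by $p_s$ reduces the whole proof to establishing $\dim H_1(\Delta,K) < \infty$.

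To prove this homological finiteness statement, I iterate the Lyndon-Hochschild-Serre argument from Theorem \ref{nilpotent} along the descending chain
$$L = \Delta_{(0)} \supseteq \Delta_{(1)} \supseteq \ldots \supseteq \Delta_{(s-1)} = \Delta, \qquad \Delta_{(j)} := L \cap (F_{j+1} \oplus \ldots \oplus F_k).$$
For each $1 \leq j \leq s-1$ the induction hypothesis forces $p_{1,\ldots,j}(L) = F_1 \oplus \ldots \oplus F_j$ (obtained by further projection from the $s-1$ case), which gives surjectivity of $\Delta_{(j-1)} \epi F_j$ and therefore a short exact sequence $\Delta_{(j)} \rightarrowtail \Delta_{(j-1)} \epi F_j$. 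As $F_j$ is free, the spectral sequence $E^2_{p,q} = H_p(F_j, H_q(\Delta_{(j)}, K))\Rightarrow H_{p+q}(\Delta_{(j-1)}, K)$ is concentrated in columns $p=0,1$ and collapses at $E^2$, so $\dim H_n(\Delta_{(j-1)}, K) < \infty$ forces $\dim H_1(F_j, H_{n-1}(\Delta_{(j)}, K)) < \infty$. Taking a nonzero $c_j \in L \cap F_j$, one checks that $c_j$ lies in $\Delta_{(j-1)}$ and commutes with $F_{j+1}\oplus\ldots\oplus F_k\supseteq \Delta_{(j)}$, hence acts trivially on $H_*(\Delta_{(j)}, K)$; applying Lemma \ref{vanishing} with $F=F_j$ and $f=c_j$ then yields $\dim H_{n-1}(\Delta_{(j)}, K) < \infty$. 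Starting from $\dim H_n(L,K) < \infty$ for $n \leq s$ and losing one homological degree per iteration, after $s-1$ steps one reaches $\dim H_1(\Delta_{(s-1)}, K) = \dim H_1(\Delta,K) < \infty$, as required.

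The main obstacle to anticipate is keeping track of which projections are surjective at each stage of the iteration: the surjection $\Delta_{(j-1)} \epi F_j$ depends only on the induction hypothesis at level $s-1$, so the iteration can run for exactly $s-1$ steps and terminates at $\Delta_{(s-1)} = \Delta$. Trying to push one step further --- to obtain $p_s(\Delta) = F_s$ directly from a hypothetical sequence $\Delta_{(s)} \rightarrowtail \Delta_{(s-1)} \epi F_s$ --- would be circular, and this is precisely why the final stage must be handled separately by the nonzero finitely generated ideal argument via Lemma \ref{propfree}(ii).
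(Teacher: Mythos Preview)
Your argument is correct but follows a genuinely different route from the paper. The paper does \emph{not} induct on $s$; instead it passes to the nilpotent quotient. Using Theorem \ref{nilpotent}(b) it observes that $N=\gamma_{k-1}(F_1)\oplus\ldots\oplus\gamma_{k-1}(F_k)$ is contained in $L$, so $L/N$ sits inside the finite-dimensional nilpotent Lie algebra $Q=\bigoplus_i F_i/\gamma_{k-1}(F_i)$ and $\U(L/N)$ is Noetherian. Tensoring a partial free $\U(L)$-resolution of $K$ down to $\U(L/N)$ then forces $H_i(N,K)$ to be finitely generated over $\U(L/N)$ for $i\le s$. Via K\"unneth this homology splits as $\bigoplus N_{j_1}^{ab}\otimes\ldots\otimes N_{j_i}^{ab}$, and Lemma \ref{quotient} (which uses Ore localisation of $\U(Q_j)$) shows each such summand contains a free cyclic $\U(Q_{j_1}\oplus\ldots\oplus Q_{j_i})$-module. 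Finite generation over the subalgebra $\U(q_{j_1,\ldots,j_i}(L/N))$ then forces $q_{j_1,\ldots,j_i}(L/N)=Q_{j_1}\oplus\ldots\oplus Q_{j_i}$, whence the desired surjectivity of $p_{j_1,\ldots,j_i}$.

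By contrast, you bypass Lemma \ref{quotient} and the Ore/Noetherian machinery entirely: you induct on $s$ and iterate the two-column Lyndon--Hochschild--Serre collapse along the chain of kernels $\Delta_{(j)}$, invoking Lemma \ref{vanishing} at each stage exactly as in the $s=2$ proof. This is more elementary and stays closer to the spirit of Theorem \ref{nilpotent}, at the cost of having to track that the surjections $\Delta_{(j-1)}\epi F_j$ are available for precisely $s-1$ steps --- a bookkeeping point you handle correctly via the induction hypothesis. The paper's approach, on the other hand, handles all $i\le s$ simultaneously and reveals the role of the nilpotent quotient more transparently.
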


\begin{proof} Denote by $N$ the direct sum $\gamma_{k-1}(F_1) \oplus \ldots \oplus \gamma_{k-1}(F_k)$, $N_j = \gamma_{k-1}(F_j)$. By Theorem \ref{nilpotent}  $N$ is an ideal of $L$. Then $L/ N$ is a Lie subalgebra of the nilpotent Lie algebra $Q = Q_1 \oplus \ldots \oplus Q_k$, where $Q_i = F_i / \gamma_{k-1}(F_i)$. This implies that $L/N$ is finitely generated nilpotent thus $\U(L/N)$ is Noetherian.

If $L$ is of type $FP_s$, there is a partial resolution
$$P_s\to P_{s-1}\to\ldots\to P_0\epi K$$ 
with each $P_j$ finitely generated free $\U(L)$-module. Applying $- \otimes_{\U(N)}K$ we get a complex
$$P_s\otimes_{\U(N)}K\to P_{s-1}\otimes_{\U(N)}K\to\ldots\to P_0\otimes_{\U(N)}K$$
whose $i$-th homology is $H_i(N,K)$ for $0 \leq i \leq s-1$ and which consists of finitely generated $\U(L/N)$-modules.  
 By Noetherianess of $\U(L/N)$ we deduce that $H_i(N,K)$ is finitely generated as $\U(L/N)$-module for all $i \leq s$. From now on we assume that $i \leq s$.
Note that using the K\"unneth formula and the fact that $N_j$ is a free Lie algebra (see Lemma \ref{propfree}), so $H_t( N_j,K) = 0$ for $t \geq 2$, we obtain
$$
H_i(N,K) = \oplus_{1 \leq j_1 < \ldots < j_i \leq k} N_{j_1}^{ab} \otimes \ldots \otimes N_{j_i}^{ab},
$$
where $N_j^{ab}$ is the abelianization $N_j/ [N_j, N_j]$ and  $\otimes$ denotes $\otimes_K$.
We deduce that $$W_{j_1, \ldots, j_i} = N_{j_1}^{ab} \otimes \ldots \otimes N_{j_i}^{ab} \hbox{ is a }\U(L/N)-\hbox{direct summand of }H_i(N,K),$$ hence  
is finitely generated.  Note that by Lemma \ref{quotient}  $W_{j_1, \ldots, j_i} $ contains a submodule isomorphic to $$\U(Q_{j_1}) \otimes \ldots \otimes \U(Q_{j_i}) \simeq  \U(Q_{j_1} \oplus \ldots \oplus Q_{j_i}).$$ The action of $L/N$ on $W_{j_1, \ldots, j_i}$ factors trough the action of $q_{j_1, \ldots, j_i}(L/N)$, where $$q_{j_1, \ldots, j_i} : Q \to Q_{j_1} \oplus \ldots \oplus Q_{j_i}$$ is the canonical projection. This means that
$\U(Q_{j_1} \oplus \ldots \oplus Q_{j_i})$ is finitely generated as $\U(q_{j_1, \ldots, j_i}(L/N))$-module. In general, it follows from the proof of \cite[corollary 7.3.9]{Weibel} that whenever $S$ is a Lie subalgebra of a Lie algebra $L$ and $\U(L)$ is finitely generated as $\U(S)$-module, then $S=L$ (in the analogous situation but  in the group ring case we would deduce that one has finite index in the other, but observe that for Lie algebras we do not have the notion of finite index). In our case,  since $q_{j_1, \ldots, j_i}(L/N)$ is a Lie subalgebra of $Q_{j_1} \oplus \ldots \oplus Q_{j_i}$ we deduce that 
$$
q_{j_1, \ldots, j_i}(L/N) = Q_{j_1} \oplus \ldots \oplus Q_{j_i}.
$$
Finally, this implies that
$$
p_{j_1, \ldots, j_i}(L) = F_{j_1} \oplus \ldots \oplus F_{j_i}.
$$
\end{proof}

\begin{theorem}\label{mainfree} Let $F_1,\ldots,F_k$ be finitely generated  free Lie algebras and assume that 
$$L\leq F_1\oplus\ldots\oplus F_k$$
is a Lie algebra of type $FP_k$. Then $L$ is a direct sum of at most $k$ free Lie algebras.
\end{theorem}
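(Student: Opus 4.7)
The plan is induction on $k$. The base case $k=1$ is immediate from Lemma~\ref{propfree}(i): any subalgebra of a finitely generated free Lie algebra is free, hence a direct sum of at most one free summand.

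For the inductive step I would first reduce to the subdirect situation. Since $L$ is of type $FP_k$ it is in particular finitely generated, so each image $F_i':=p_i(L)$ is a finitely generated subalgebra of $F_i$, hence itself a finitely generated free Lie algebra by Lemma~\ref{propfree}(i). Replacing the ambient sum by $F_1'\oplus\cdots\oplus F_k'$ makes $L$ a subdirect sum of finitely generated free Lie algebras, which is exactly the setting in which Theorem~\ref{projection} is available.

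Now split into cases. If some $L\cap F_i'=0$ (which subsumes the degenerate case $F_i'=0$), then the projection killing the $i$-th coordinate is injective on $L$, so $L$ embeds as a subalgebra of a direct sum of $k-1$ finitely generated free Lie algebras; since $L$ is of type $FP_{k-1}$, the inductive hypothesis concludes. If instead $L\cap F_i'\neq 0$ for every $i$ and every $F_i'$ is non-abelian, Theorem~\ref{projection} with $s=k$ gives $p_{1,\ldots,k}(L)=F_1'\oplus\cdots\oplus F_k'$; but $p_{1,\ldots,k}$ is the identity on the ambient direct sum, so $L$ already equals $F_1'\oplus\cdots\oplus F_k'$. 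The remaining sub-case is when some $F_i'$ is abelian: then, being a non-zero abelian free Lie algebra, $F_i'$ is one-dimensional, say $F_i'=K\cdot x$, and $L\cap F_i'\neq 0$ forces $x\in L$. Because the copy of $F_i'$ inside the direct sum is central in $L$, a routine check yields the internal Lie algebra decomposition $L=L'\oplus F_i'$, where $L':=L\cap\bigl(F_1'\oplus\cdots\oplus\widehat{F_i'}\oplus\cdots\oplus F_k'\bigr)$. Since $F_i'$ is finite-dimensional and hence of type $FP_\infty$, Theorem~\ref{FPinfty} applied to $F_i'\rightarrowtail L\epi L'$ shows that $L'$ is of type $FP_k$, in particular of type $FP_{k-1}$; induction then decomposes $L'$ into at most $k-1$ free summands, and adjoining $F_i'$ gives the required decomposition of $L$.

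The main obstacle is the coexistence of abelian and non-abelian factors: Theorem~\ref{projection} requires every $F_i$ to be non-abelian, so it cannot be invoked directly when some $F_i'$ is one-dimensional. The fix is to peel off each abelian summand as a central direct summand of $L$ and to use Theorem~\ref{FPinfty} to transfer the $FP_k$ property to the complement, so that the induction can be continued on a smaller number of factors; everything else is routine bookkeeping.
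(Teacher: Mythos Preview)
Your proof is correct and follows essentially the same approach as the paper: reduce to the subdirect situation, separate the abelian from the non-abelian free factors, invoke Theorem~\ref{projection} on the non-abelian part, and use Theorem~\ref{FPinfty} to transfer the $FP$ property across the abelian part. The only organizational difference is that the paper strips off all abelian factors simultaneously by passing to the quotient $L/Z(L)$, where $Z(L)=L\cap\bigl(\bigoplus_{F_i\text{ abelian}}F_i\bigr)$, and then applies Theorem~\ref{projection} once to that quotient; you instead peel off the abelian one-dimensional summands one at a time and carry an explicit induction on $k$. Your version is slightly more careful in making the reduction to the subdirect setup explicit (replacing each $F_i$ by $p_i(L)$), which the paper leaves implicit when it invokes Theorem~\ref{projection}.
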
 

\begin{proof} Observe first that we may assume that $L\cap F_i\neq 0$ for $1\leq i\leq k$ because in other case $L$ would embed in a direct sum of less thank $k$ finitely generated free Lie algebras. 
If all the $F_i$ are non-abelian then by the previous theorem $L =  F_1\oplus\ldots\oplus F_k$. Suppose that some $F_1, \ldots, F_m$ are abelian, hence 1-dimensional and the rest $F_{m+1}, \ldots, F_k$ are non-abelian. Then
the center of $L$ is $Z(L) = L \cap( \oplus_{1 \leq i \leq m} F_i)
$ which is finite dimensional and 
$$L/ Z(L) \leq F_{m+1} \oplus \ldots \oplus F_k.
$$
As $Z(L)$ is finite dimensional and abelian, we deduce that $Z(L)$ is a Lie algebra of type $FP_{\infty}$. Then by Theorem \ref{FPinfty} $L / Z(L)$ is of type $FP_k$ and  by Theorem \ref{projection} 
\begin{equation} \label{quotient1}
L / Z(L) = F_{m+1} \oplus \ldots \oplus F_k.
\end{equation}
We claim that (\ref{quotient1}) implies
\begin{equation} \label{equal01} L \simeq Z(L) \oplus  F_{m+1} \oplus \ldots \oplus F_k
\end{equation}
and $Z(L)$ is a direct sum of at most $m$ abelian free Lie algebras, so $L$ is a direct sum of at most $k$ free Lie algebras. 

To show (\ref{equal01})
consider $f_i \in F_i, f_j  \in F_j$ for $i \not= j$, $\lambda_i, \lambda_j \in Z(L)$. Then there are elements $a_i, a_j \in L$ such that $a_i = (\lambda_i, 0 , \ldots, f_i , \ldots, 0) \in Z(L) \oplus F_{m+1} \oplus \ldots \oplus F_k$ 
and $a_j = (\lambda_j, 0 , \ldots, f_j , \ldots, 0) \in Z(L) \oplus F_{m+1} \oplus \ldots \oplus F_k$. Suppose $i< j$, then $[a_i, a_j] = ([\lambda_i, \lambda_j],0, \ldots, [f_i,0], 0, \ldots, [0,f_j], \ldots, 0) = 0$ in $L$. This completes the proof of  (\ref{equal01}).

\end{proof}

\section{The $n$-$n+1$-$n+2$ Conjecture for Lie algebras}

For the reader's convenience, we recall here the hypothesis of the $1$-$2$-$3$-Conjecture for Lie algebras. We assume that we have short exact sequences of Lie algebras
$$A\rightarrowtail L_1\buildrel{\pi_1}\over\epi Q,$$
$$B\rightarrowtail L_2 \buildrel{\pi_2}\over\epi Q,$$
so that $A$ is finitely generated and $L_1$ and $L_2$ are finitely presented. Note that this implies that $Q$ is finitely presented. 
In the original $1$-$2$-$3$-Conjecture one assumes also that $Q$ is of type $FP_3$. Here we will add an alternative hypothesis and  assume that at least {\bf one} of the following conditions hold:
 \begin{itemize}
\item[a)] $Q$ is of type $FP_3$,

\item[b)] $H_2(A,K)$ is finitely generated as $\U(Q)$-module.
\end{itemize}
In our next result we show that under one of these conditions,
 the fibre sum
$$P= \{ (h_1, h_2) \in L_1 \oplus L_2 \mid  \pi_1(h_1) = \pi_2 (h_2) \} \hbox{ is finitely presented.}
$$

\begin{theorem} \label{123} The $1$-$2$-$3$-Conjecture for Lie algebras holds. Furthermore  the $1$-$2$-$3$-Conjecture holds without assuming that $Q$ is of type $FP_3$ if $H_2(A,K)$ is finitely generated as $\U(Q)$-module.
\end{theorem}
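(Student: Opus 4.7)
I would prove the theorem in two stages: first reduce hypothesis (a) to (b), then construct a finite presentation under (b) alone. Since $P$ contains the finitely generated ideal $A$ with finitely presented quotient $L_2$, $P$ is automatically finitely generated, so by Lemma~\ref{FP2} it will suffice to exhibit a finite set of relations, i.e.\ to prove $P$ is of type $FP_2$.

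\textbf{Stage 1 (from (a) to (b)).} Assuming $L_1$ is $FP_2$ and $Q$ is $FP_3$, I would show that $H_2(A,K)$ is finitely generated as a $\U(Q)$-module. The key tool is that $\U(L_1)$ is free as a right $\U(A)$-module, by Poincar\'e--Birkhoff--Witt applied to a basis of $L_1$ extending one of $A$. A finitely generated partial free $\U(L_1)$-resolution of $K$ of length~$2$ (which exists since $L_1$ is $FP_2$) therefore becomes, after $-\otimes_{\U(A)} K$, a complex of finitely generated free $\U(Q)$-modules $\bar P_2\to\bar P_1\to\bar P_0$ whose low-degree homology computes $H_i(A,K)$ for $i\leq 1$. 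The $FP_3$ hypothesis on $Q$ is then used, via the Lyndon--Hochschild--Serre spectral sequence $H_p(Q,H_q(A,K))\Rightarrow H_{p+q}(L_1,K)$ combined with a dimension-shifting argument, to provide enough additional control to identify $H_2(A,K)$ as a finitely generated $\U(Q)$-module. This is the Lie-algebra analogue of a standard Bieri--Eckmann style argument.

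\textbf{Stage 2 (presentation under (b)).} I would write $L_2=F_2/R_2$ with $F_2$ finitely generated free on $y_1,\ldots,y_s$ and $R_2=\langle\rho_1,\ldots,\rho_r\rangle$ finitely generated as an ideal, and lift $F_2\twoheadrightarrow L_2$ to $\sigma:F_2\to P$ using the freeness of $F_2$. Each $\sigma(\rho_j)$ lies in $A$, hence is expressible as a Lie word $w_j(a_1,\ldots,a_m)$ in a fixed finite generating set $\{a_k\}$ of $A$; similarly each $[\sigma(y_i),a_k]=u_{ik}(a)\in A$. Letting $F$ be free on $\{Y_i\}\cup\{\alpha_k\}$, I surject $F\to P$ by $Y_i\mapsto\sigma(y_i)$, $\alpha_k\mapsto a_k$, and let $R_0\subseteq F$ be the ideal generated by the finite list
\[\{\rho_j(Y)-w_j(\alpha)\}_{j=1}^r \cup \{[Y_i,\alpha_k]-u_{ik}(\alpha)\}_{i,k}.\]
Set $\tilde P:=F/R_0$. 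The subalgebra $\tilde A\subseteq\tilde P$ generated by the $\alpha_k$'s is an ideal (thanks to the second family of relations), satisfies $\tilde P/\tilde A\cong L_2$, and is itself free on $\alpha_1,\ldots,\alpha_m$ (no non-trivial pure $\alpha$-word lies in $R_0$, as the generators of $R_0$ all carry $Y$-content that cannot cancel in Lie brackets). The surjection $\tilde P\twoheadrightarrow P$ has kernel $K:=\ker(\tilde A\twoheadrightarrow A)$, an ideal of $\tilde P$, so $P$ is finitely presented iff $K$ is finitely generated as an ideal of $\tilde P$.

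\textbf{Main obstacle.} By the Hopf-type formula (Lemma~\ref{schur}), $H_2(A,K)\cong (K\cap[\tilde A,\tilde A])/[K,\tilde A]$. Since $K/(K\cap[\tilde A,\tilde A])$ embeds in the finite-dimensional $\tilde A^{ab}$, the quotient $K/[K,\tilde A]$ is a finite-dimensional extension of $H_2(A,K)$, and thus finitely generated as $\U(Q)$-module by~(b). I would lift a finite $\U(Q)$-generating set of $H_2(A,K)$ together with a basis of the finite-dimensional complement to obtain finitely many $k_1,\ldots,k_M\in K$ generating $K/[K,\tilde A]$. The $\U(Q)$-action on $H_2(A,K)$ is realised inside $\tilde P$ by bracketing with the $Y_i$'s (since $\tilde P/\tilde A\cong L_2\twoheadrightarrow Q$), while bracketing with the $\alpha_k$'s fills in the $[K,\tilde A]$-part. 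The hardest step is then a Nakayama-type argument promoting ``$k_1,\ldots,k_M$ generate $K/[K,\tilde A]$'' to ``they generate $K$ as an ideal of $\tilde P$''. Since $\U(P)$ is not Noetherian in general, the usual Nakayama lemma does not apply directly; this step must be handled by a successive-approximation argument exploiting that $K$ sits inside the free Lie algebra $\tilde A$, whose bracket structure can be analysed combinatorially, and that the $\tilde P$-action on $K$ is controlled by the finitely many defining relations of $R_0$.
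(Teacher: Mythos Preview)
The decisive gap is the ``Nakayama-type argument'' you yourself flag as the hardest step. In your $\widetilde P$ the kernel $K=\ker(\widetilde A\twoheadrightarrow A)$ is highly non-abelian, so knowing that finitely many elements generate $K/[K,\widetilde A]$ gives no mechanism for concluding that they generate $K$ as an ideal of $\widetilde P$: there is no Nakayama lemma over $\U(P)$, and the ``successive approximation'' you gesture at has no reason to terminate. (Incidentally, the claim that $\widetilde A$ is free on the $\alpha_k$ is not justified by ``the generators of $R_0$ carry $Y$-content'': bracketing the relation $\rho_j(Y)=w_j(\alpha)$ with $\alpha_k$ and reducing via $[Y_i,\alpha_k]=u_{ik}(\alpha)$ produces pure $\alpha$-identities in $\widetilde P$ that need not hold in the free Lie algebra on the $\alpha_k$.) The paper avoids all of this by a different choice of $\widetilde P$. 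After first reducing to the case $L_2=F$ free, instead of imposing $r_i(\underline x)=w_i(\underline a)$ it imposes only the commutator relations $R_4=\{[\,r_i(\underline x)-w_i(\underline a),\,a_j\,]=0\}$, together with $R_2$ and the relations $R_3$ of $A$. This forces $[\widetilde A,\widetilde B]=0$ for two natural ideals $\widetilde A,\widetilde B$ of $\widetilde P$, and the kernel $\Delta=\widetilde A\cap\widetilde B$ of $\widetilde P\twoheadrightarrow P$ is therefore \emph{abelian}; for an abelian $\Delta$, finite generation as a $\U(Q)$-module immediately gives finite generation as an ideal of $\widetilde P$, and no Nakayama step is needed.

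Your Stage~1 is also not the paper's route and is inadequately argued. The paper does not reduce (a) to (b); it establishes finite $\U(Q)$-generation of the abelian $\Delta$ separately under each hypothesis: under~(a) via a short exact sequence $\Delta\rightarrowtail\widetilde B^{\,ab}\twoheadrightarrow B^{ab}$ together with the fact that $Q$ of type $FP_3$ makes $B^{ab}$ finitely \emph{presented} (not just finitely generated) as a $\U(Q)$-module; under~(b) via the Hopf formula, which exhibits $\Delta$ as a quotient of $H_2(A\oplus B,K)$. Your spectral-sequence sketch for (a)$\Rightarrow$(b) ultimately requires a submodule of a finitely generated $\U(Q)$-module to be finitely generated, which fails absent Noetherianity. (Also, the theorem asks for finite presentability, not merely type $FP_2$; these are not known to coincide for Lie algebras.)
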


\begin{proof} We divide the proof in three steps.

\medskip
{\bf Claim 1.} {\it If Theorem \ref{123} holds whenever $L_2$ is a free Lie algebra then Theorem \ref{123} holds for any finitely presented Lie algebra $L_2$.} 

\medskip
{\bf Proof of Claim 1:} Let $F$ be a finitely generated free Lie algebra and $\mu : F \to L_2$ a surjective homomorphism of Lie algebras. Composing $\mu$ with the epimorphism $\pi_2 : L_2 \to Q$ gives an epimorphism $\rho : F \to Q$ with kernel $B_0$. Let $P_0$ be the fibre sum of the short exact sequences   $A \rightarrowtail L_1 \to Q$ and $B_0 \rightarrowtail F \epi Q$
i.e.
$$
P_0 = \{ (h_1, h_2) \in L_1 \oplus F \mid  \pi_1(h_1) = \rho (h_2) \}.
$$
By assumption the $1$-$2$-$3$ Conjecture holds in this case i.e. $P_0$ is a finitely presented Lie algebra. Note that $Ker (\mu) \subseteq B_0$ and since $L_2$ is finitely presented as a Lie algebra $Ker (\mu)$ is finitely generated as an ideal of $F$. Then $Ker (\mu)$ is finitely generated as an ideal of $P_0$ and since $P_0 / Ker (\mu) \simeq P$, where $P$ is the fibre sum of the short exact sequences $A \rightarrowtail L_1 \epi Q$ and $B \rightarrowtail L_2 \epi Q$, we deduce that $P$ is a finitely presented Lie algebra and the claim follows.

\medskip

From now we set $F=L_2$ and assume that it  is a finitely generated free Lie algebra.

\medskip
{\bf Claim 2.} {\it There is a finitely presented Lie algebra $\widetilde{P}$ and a short exact sequence of Lie algebras $\Delta\mono\widetilde{P}\epi P$ with $\Delta$ abelian.}

\medskip
{\bf Proof of Claim 2:}
Let $X$ be a free basis for the finitely generated free algebra $L_2=F$ (thus $X$ is finite). Note that $B = Ker (\pi_2)$ is free and let $\{ r_i \}_{i \in I}$ be a free basis of $B$ as a free Lie algebra over $K$. Since $Q \simeq F / B$ is finitely presented, we deduce that $B$ is a finitely generated ideal of $F$. Hence 
\begin{equation} \label{I_0} \hbox{there is a finite subset }I_0 \hbox{ of  }I \hbox{ such that }\{ r_i \}_{i \in I_0} \hbox{ generates }B \hbox{ as an ideal of } F.
\end{equation}

Let $A_0 = \{ a_1, \ldots, a_k \}$ be a finite generating set of the Lie algebra $A$. From the short exact sequence $A\mono L_1\buildrel{\pi_1}\over\epi Q$ we get the following finite presentation  of $L_1$ 
$$
L_1 = \langle X \cup A_0 \mid   R_1 \cup R_2 \cup R_3 \rangle\, 
$$
with
 $$ R_1 = \{ r_i(\underline{x}) - w_i (\underline{a}) \}_{i \in I_0}, R_2 = \{ [a_j, x] - v_{j,x}(\underline{a}) \}_{1 \leq j \leq k, x \in X} \hbox{ and } R_3 = \{ z_j(\underline{a}) \} _{j \in J_0}$$
where $J_0$ is a finite set of indices, $ w_i (\underline{a})  , v_{j,x}(\underline{a}),  z_j(\underline{a})$ are elements of the free Lie algebra with basis $A_0$ and $\pi_1(x)=x$ for any $x\in X$. The same thing happens in $F$: we also have $\pi_2(x)=x$ and this means that the Lie subalgebra of $L_1\oplus F$ generated by $(x,x)$ with $x\in X$ is inside the fibre Lie sum  $P$. In fact that algebra maps isomorphically onto $F$ under the restriction of the second component projection
$$\mu_2:P\to F.$$
The kernel of $\mu_2$ is $\{(a,0)\in P\mid a\in A\}$. Therefore we see that $P$ is isomorphic to the semidirect sum of $A$ by $F$, meaning that $A$ is an ideal. We denote this semidirect sum by $A \leftthreetimes F$. Observe that here we identify $F$ with the algebra generated by $(x,x)$ for $x \in X$.

%Let $P$ be the fibre product of the short exact sequences $A \rightarrowtail L_1 \epi Q$ and $B \rightarrowtail F \epi Q$. Then $P$ is isomorphic to the %Lie algebra $ A  \oplus F$ where $A$ is an ideal. From now on we identify  $P$ with $A \oplus F$ and note that   $B \subseteq A \oplus B \subseteq P = %A \oplus F$,
 %$B$  is the ideal of $P$ generated by $\{ - w_i(\underline{a}) + r_i(\underline{x}) \}_{i \in I_0}$.

We define the finitely presented Lie algebra $\widetilde{P}$ by 
$$
\widetilde{P} = \langle  X \cup A_0 \mid  R_2 \cup R_3 \cup R_4\rangle\,$$  with $R_2$ and $R_3$ as before and
$$R_4 = \{  [r_i(\underline{x})  - w_i (\underline{a}), a_j] \}_{i \in I_0, 1 \leq j \leq k.}
$$
We claim that the map induced by $x\mapsto (x,x)$ and $a\mapsto (a,0)$ yields an epimorphism of Lie algebras $$ \delta : \widetilde{P}\epi P.$$ We only have to check that this map is well defined, i.e., that it respects the relators $R_2$, $R_3$ and $R_4$ of $\widetilde{P}$. This is obvious for $R_2$ and $R_3$. For $R_4$ it is a consequence of the fact that in $P$, 
$$\delta(r_i(\underline{x})-w_i(\underline{a}))=
(r_i(\underline{x})-w_i(\underline{a}),r_i(\underline{x}))=(0,r_i(\underline{x}))$$ commutes with $\delta(a_j) = (a_j,0)$.
So if we let $\Delta$ be the kernel of $\delta$ we have the claimed short exact sequence
$$\Delta\mono\widetilde{P}\epi P.$$

We want to show that $\Delta$ is abelian.
Let $\widetilde{A}$ be the ideal of $\widetilde{P}$ generated by  $A_0$. 
%Then $\widetilde{P}$ is isomorphic to the semi-direct sum $\widetilde{A} \oplus F$ with $\widetilde{A}$ an ideal. From now on we identify $\widetilde{P}$ with  $\widetilde{A} \oplus F$. 
Set
$$b_i =  r_i(\underline{x})- w_i(\underline{a}) \in \widetilde{P},\,i\in I_0$$
and let $\widetilde{B}$ be the ideal of $\widetilde{P}$ generated by $\{ b_i \}_{i \in I_0}$. Note that by (\ref{I_0}) for every $i \in I$ we have
$$
r_i(\underline{x})= \sum_{i_0 \in I_0} r_{i_0}(\underline{x}) \circ f_{i_0}(\underline{x}),
$$
where $f_{i_0}(\underline{x})$ belongs to the enveloping algebra  $\U(F)$ and $\circ$ denotes the adjoint action, i.e. $r_{i_0}(\underline{x}) \circ x_1=[r_{i_0}(\underline{x}) , x_1]$. As a consequence,
$$  r_{i_0}(\underline{x}) \circ x_1 x_2 =(r_{i_0}(\underline{x}) \circ x_1)\circ x_2=  [[r_{i_0}(\underline{x}), x_1], x_2].$$  Then define
$$
b_i = \sum_{i_0 \in I_0} b_{i_0} \circ f_{i_0}(\underline{x}) =
\sum_{i_0 \in I_0} r_{i_0}(\underline{x}) \circ f_{i_0}(\underline{x})
 - \sum_{i_0 \in I_0} w_{i_0}(\underline{a}) \circ f_{i_0}(\underline{x}) =  $$ $$ r_i(\underline{x}) -  \sum_{i_0 \in I_0} w_{i_0}(\underline{a}) \circ f_{i_0}(\underline{x}) 
   \in 
r_i(\underline{x}) + \widetilde{A}  \ \  \hbox{ for } i \in I \setminus I_0
$$
and set $w_i(\underline{a}) = r_i(\underline{x}) - b_i \in \widetilde{A}$ for $i \in I \setminus I_0$, hence
$$
b_i =  r_i(\underline{x}) - w_i(\underline{a}) \in \widetilde{B} \ \  \hbox{ for } i \in I.$$

Let $B_1$ be the Lie subalgebra  of $\widetilde{B}$ generated by $\{ b_i \}_{i \in I}$.  When projected from $\widetilde{P} $ to $F$ via the Lie algebra homomorphism that is the identity on $X$ and sends each element of $A_0$ to $0$, 
$B_1$ maps onto the free Lie algebra $B = \langle \{ r_i \}_{i \in I}\rangle$ with a free basis $\{ r_i \}_{i \in I}$, so  $B_1 \simeq B$ i.e. $$B_1 \hbox{ is a free Lie algebra with a free basis  }\{ b_i \}_{i \in I}.$$ Thus the ideal $M=\widetilde{A}+\widetilde{B}$ of $\widetilde{P}$ is a semidirect sum of Lie algebras  $\widetilde{A} \leftthreetimes B_1$.

We claim that the relations $R_2$ and $R_4$ imply that \begin{equation} \label{commutator} [\widetilde{A}, \widetilde{B}] = 0. \end{equation} Indeed let $f(\underline{x}) \in F^m \setminus F^{m-1} \subseteq  \U(F)$, where $F^{-1} = \emptyset$, $F^0 = 1_{\U(F)}$, $F^1 = F$ and $F^{i + 1 }  = F^i F$ in $\U(F)$. We prove by induction on $m$ that \begin{equation} \label{commutator2}
[b_i \circ f(\underline{x}) , a_j] = 0 \hbox{ in }\widetilde{P} \hbox{ for } i \in I_0, 1 \leq j \leq k.\end{equation} Note that if $m = 0$ there is nothing to prove as the set $R_4$ is included in the relations of $\widetilde{P}$. If $m \geq 1$ we have a decomposition $f(\underline{x}) = f_0(\underline{x}) x$ for some $x \in X$, $f_0(\underline{x}) \in F^{m-1} \setminus F^{m-2}$. Then   we have by induction that $[b_i \circ f_0(\underline{x}), \langle A_0 \rangle ] = 0$ in $\widetilde{P}$ for $i \in I_0$, where $\langle A_0 \rangle$ is the Lie subalgebra of $\widetilde{P}$ generated by $A_0$. Furthermore for $1 \leq j \leq k$, $i \in I_0$ we have (using the Jacobi identity)
$$
[b_i \circ f(\underline{x}), a_j] = [b_i \circ f_0(\underline{x})x, a_j] =
[b_i \circ f_0(\underline{x}), a_j] \circ x - [b_i \circ f_0(\underline{x}), a_j \circ x ] =$$ $$ - [b_i \circ f_0(\underline{x}), a_j \circ x ] = - [b_i \circ f_0(\underline{x}), v_{j,x}(\underline{a}) ]  \in  [b_i \circ f_0(\underline{x}), \langle A_0 \rangle] = 0 \hbox{ in } \widetilde{P}.
$$
Using (\ref{commutator2}) we show
\begin{equation} \label{comm3}
[b_i \circ f(\underline{x}), a_j \circ g(\underline{x})] = 0 \hbox{ in }\widetilde{P} \hbox{ for } i \in I_0, 1 \leq j \leq k, f(\underline{x}),g(\underline{x}) \in \U(F)
\end{equation}
It suffices to show (\ref{comm3}) for $g \in F^s \setminus F^{s+1} \subset \U(F)$ and we induct on $s \geq 0$, the case when $s = 0$ means that $g(\underline{x})$ is a scalar i.e. belongs to $K$. The calculation is similar to the above calculation (using Jacobi identity), for completeness we include the details. Suppose $s \geq 1$, then $g(\underline{x}) = g_0(\underline{x}) x$ for some $x \in X,g_0(\underline{x}) \in F^{s-1} \setminus F^{s-2}$. Then by induction on $s$ we have
$$
[b_i \circ f(\underline{x}), a_j \circ g_0(\underline{x})] = 0 = [b_i \circ f(\underline{x}) x, a_j  \circ g_0(\underline{x})]
$$ and hence
$$
[b_i \circ f(\underline{x}), a_j \circ g(\underline{x})] =
[b_i \circ f(\underline{x}), (a_j \circ g_0(\underline{x})) \circ x] =$$
$$
[b_i \circ f(\underline{x}), a_j \circ g_0(\underline{x})] \circ x - 
[b_i \circ f(\underline{x}) x, a_j \circ g_0(\underline{x})]  = 0.
$$
Since $\widetilde{B}$ is the ideal of $\widetilde{P}$ generated by $\{ b_i \}_{i \in I_0}$, (\ref{comm3}) implies (\ref{commutator}).

Observe that under the projection $\delta : \widetilde{P}\epi P$, $\widetilde{A}$ is mapped onto $\{(a,0) \mid a\in A\}$ and $\widetilde{B}$ is mapped onto $\{(0,b) \mid b\in B\}$.  Recall $\Delta = Ker (\delta)$. 
Since $\widetilde{P} / \widetilde{A} \simeq P / A$ and $\widetilde{P}/ \widetilde{B} \simeq P / B$ we deduce that $\Delta \subseteq \widetilde{A} \cap \widetilde{B}.$ On the other hand $A \cap B = 0 $ in $P$, hence $\widetilde{A} \cap \widetilde{B} \subseteq \Delta.$
Thus, 
$$\Delta = \widetilde{A} \cap \widetilde{B}.$$
Note that $\widetilde{A} / \Delta \simeq A$ and $ \widetilde{B} / \Delta \simeq B$. Also, as $[\widetilde{A},\widetilde{B}]=0$ we deduce that $\Delta$ is abelian as we wanted to show.

\medskip
{\bf Claim 3.} {\it $\Delta$ is finitely generated as $\U(Q)$-module.}

\medskip
{\bf Proof of Claim 3:}
  We split the proof in two parts according to whether we have hypothesis a) or b) from the beginning of the proof. 

Suppose first that condition a) holds.  Consider the central short exact sequence of Lie algebras 
$$
 \Delta \rightarrowtail \widetilde{B} \epi B. 
$$
%Since $B$ is a free Lie algebra $H_2(B, K) = 0$. Let $F_B$ be a free Lie algebra, $\theta : F_B \to \widetilde{B}$ an epimorphism of Lie algebras and put $R_B = \theta^{-1} (\Delta)$. Then 
%$$0 = H_2(B,K) = ([F_B, F_B] \cap R_B) / [F_B, R_B],$$
% so $[F_B, F_B] \cap R_B = [F_B, R_B]$. Projecting to $\widetilde{B}$ we obtain $[\widetilde{B}, \widetilde{B}] \cap \Delta = [\widetilde{B}, \Delta] = 0$ (because $\Delta\subseteq\widetilde{A}$). Thus
%$\Delta$ embeds in $\widetilde{B}^{ab} =  \widetilde{B}/ [\widetilde{B}, \widetilde{B}] $. Moreover, we have $B=\widetilde{B}/\Delta$ thus
%$$[B,B]=([\widetilde{B},\widetilde{B}]+\Delta)/\Delta$$
%and so $B^{ab}=B/[B,B] = \widetilde{B}/([\widetilde{B},\widetilde{B}]+\Delta)$. 
Since $B$ is free, $\widetilde{B} \simeq  \Delta \leftthreetimes B$. Identifying $\widetilde{B}$  with $ \Delta \leftthreetimes B$ and using that $\Delta$ is central in $\widetilde{B}$ we obtain that $[\widetilde{B},\widetilde{B}] = [B,B]$, hence $\Delta \cap [\widetilde{B},\widetilde{B}] = \Delta \cap [B,B] \subseteq \Delta \cap B =0 $.
This means that we have a short exact sequence
\begin{equation}\label{Bab}\Delta\mono\widetilde{B}^{ab}\epi B^{ab}.\end{equation}
By the same argument as at the beginning of the proof of Lemma \ref{quotient}, using the isomorphism $F/B \simeq Q$ one can see that there is an exact sequence
$$0\to B^{ab}\to\U(Q)^d\to\U(Q)\to K\to 0$$
where $d$ is the rank of the free Lie algebra $F$.
As $Q$ is of type $FP_3$ we deduce by the Lie algebra version of the proof of \cite[VIII 4.3]{Brown} that $B^{ab}$ is finitely presented as $\U(Q)$-module i.e. is $FP_1$ as $\U(Q)$-module.
So using the short exact sequence (\ref{Bab}) and the Lie algebra version of \cite[proposition 1.4]{Bieribook} we get that $\Delta$ is finitely generated as $\U(Q)$-module because the $\U(Q)$-modules  $B^{ab}$ and $\widetilde{B}^{ab}$ are  finitely presented and finitely generated respectively.

b) We assume now that $H_2(A,K)$ is finitely generated as $\U(Q)$-module. 
Now we choose the elements of $R_3$ in such a way that  the canonical projection $\widetilde{A} \to A$ induces an isomorphism $\widetilde{A}^{ab} \simeq A^{ab}$ (note that here we use that $A^{ab} $ is finite dimensional). 
Then since $\widetilde{A}/ \Delta \simeq A$ we get  $0 = Ker (\widetilde{A}^{ab} \to A^{ab}) = (\Delta + [\widetilde{A}, \widetilde{A}])/ [\widetilde{A}, \widetilde{A}]$, hence $\Delta \subseteq [\widetilde{A}, \widetilde{A}] \subseteq [M,M]$, where $M = \widetilde{A} + \widetilde{B} = \widetilde{A} \leftthreetimes B_1$. Consider the short exact sequence of Lie algebras
$$
 \Delta \rightarrowtail M \epi A \oplus B 
$$
Since $\Delta \subseteq [M,M]$ and $\Delta$ is central in $M$ we deduce by Lemma \ref{schur} that $\Delta$ is a surjective image of $H_2(A \oplus B,K)$. Therefore, 
 if $H_2(A \oplus B,K)$ is finitely generated as $\U(Q)$-module then $\Delta$ is finitely generated as $\U(Q)$-module.
By the K\"unneth formula $$H_2(A \oplus B,K) = H_2(A,K) \oplus H_2(B,K) \oplus H_1(A,K) \otimes H_1(B,K).$$
Observe that  since $B$ is a free Lie algebra $H_2(B,K) = 0$. 
Moreover $H_1(B,K) = B^{ab} = B/ [B,B]$ is a finitely generated $\U(Q)$-module and $H_1(A,K) = A/ [A,A]$ is finite dimensional thus our hypothesis that   $H_2(A,K)$ is a finitely generated $\U(Q)$-module implies that also $H_2(A \oplus B,K)$ is. This completes the proof of Claim 3.

\medskip

Finally we can complete the proof. At this point, we have $$\Delta\mono\widetilde{P}\epi P$$
with $\widetilde{P}$ finitely presented and $\Delta$ abelian and finitely generated as $\U(Q)$-module thus also as $\U(P)$-module. Then adding a finite generating set of $\Delta$ to the presentation of $\widetilde{P}$ we get a finite presentation of $P$. 
\end{proof}

\begin{remark} Observe that if we assume that $Q$ is finitely generated and nilpotent then it is finite dimensional, so is of type $FP_\infty$. In particular $Q$ is of type $\FP_3$ and we may apply Theorem \ref{123}. 
\end{remark}

\begin{theorem} \label{fin-pres} Let $L \leq L_1 \oplus \ldots \oplus L_k$ be a subdirect Lie sum with $L \cap L_i \not= 0$ and $L_i$ finitely presented for $1\leq i\leq k$. Assume that $p_{i,j} (L) = L_i \oplus L_j$ for all $1 \leq i < j \leq k$, where  $p_{i,j} : L_1 \oplus \ldots \oplus L_k \to L_i \oplus L_j$ is the canonical projection. Then $L$ is a finitely presented Lie algebra.
\end{theorem}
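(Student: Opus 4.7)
The plan is to proceed by induction on $k$, reducing the inductive step to the $1$-$2$-$3$ Theorem (Theorem~\ref{123}). The base cases are immediate: for $k=1$ the subdirect hypothesis gives $L=L_1$, and for $k=2$ the assumption $p_{1,2}(L)=L_1\oplus L_2$ forces $L=L_1\oplus L_2$, both finitely presented. For the inductive step $k\to k+1$, I set $\tilde L:=p_{1,\ldots,k}(L)$; then $\tilde L$ is a subdirect Lie sum in $L_1\oplus\ldots\oplus L_k$ inheriting the pair-surjections $p_{i,j}(\tilde L)=p_{i,j}(L)=L_i\oplus L_j$ for $i<j\leq k$ and satisfying $\tilde L\cap L_i\supseteq L\cap L_i\neq 0$, so by the inductive hypothesis $\tilde L$ is finitely presented. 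Then $L$ is naturally the fibre Lie sum of $\tilde L$ and $L_{k+1}$ over the common quotient $Q=\tilde L/A\cong L_{k+1}/B$, where $A=L\cap(L_1\oplus\ldots\oplus L_k)$ and $B=L\cap L_{k+1}\neq 0$.

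To apply Theorem~\ref{123} to this fibre sum, it remains to verify that $A$ is finitely generated as a Lie algebra and that either $Q$ is of type $FP_3$ or $H_2(A,K)$ is finitely generated as a $\U(Q)$-module. The hard step is the finite generation of $A$. The strategy is to exploit the pair-surjections $p_{i,k+1}(L)=L_i\oplus L_{k+1}$ for $i\leq k$, which force each projection $A\to L_i$ to be surjective; lifting a finite generating set of each $L_i$ into $A$ therefore produces a finite subset $X\subset A$. Since $\langle X\rangle$ may fall short of $A$, I augment $X$ by finitely many correction elements drawn from the nontrivial ideals $L\cap L_i\subset A$ (nonzero by hypothesis). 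A bracketing argument, exploiting that elements in distinct coordinate factors of $L_1\oplus\ldots\oplus L_k$ commute, should then show that the augmented set generates $A$ as a Lie algebra.

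Once $A$ is known to be finitely generated, I verify the second condition either by showing $Q$ is finitely presented (via the parallel finite generation of $B$ as an ideal of $L_{k+1}$, yielding fp of $Q=L_{k+1}/B$) or by computing $H_2(A,K)$ through the K\"unneth formula applied to the subdirect structure of $A$, in the spirit of the proof of Theorem~\ref{projection}. With all hypotheses verified, Theorem~\ref{123} concludes that $L$ is finitely presented, completing the induction. The main obstacle throughout is the rigorous justification of the bracketing argument for $A$: when $Q$ is abelian the "twisted" lifted generators together with one correction from each $L\cap L_i$ evidently suffice, but the general case will require careful tracking of how the correction elements interact with the lifted generators under iterated brackets.
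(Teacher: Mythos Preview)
Your overall architecture matches the paper's: induct on $k$, realise $L$ as the fibre Lie sum of $\tilde L=p_{1,\ldots,k}(L)$ and $L_{k+1}$ over $Q$, then invoke Theorem~\ref{123}. The gap is precisely where you flag it yourself: the finite generation of $A=L\cap(L_1\oplus\ldots\oplus L_k)$, and relatedly the verification of the auxiliary hypothesis on $Q$.

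The paper closes this gap with a single observation that you are circling around but do not state: from the pair-surjections one gets $\gamma_{k}(L_i)\subseteq L\cap L_i$ for every $i$. Concretely (in your indexing with $k+1$ factors), given $l_1,\ldots,l_k\in L_i$, choose for each $j\neq i$ an element $h_j\in L$ with $p_{j,i}(h_j)=(0,l_j)\in L_j\oplus L_i$; then $[h_1,\ldots,\widehat{h_i},\ldots,h_{k+1}]$ has zero in every coordinate except the $i$-th, where it equals $[l_1,\ldots,l_k]$. This is the ``bracketing argument'' you need, and it does much more than produce correction elements: it forces $Q=L_{k+1}/(L\cap L_{k+1})$ to be a quotient of the nilpotent Lie algebra $L_{k+1}/\gamma_k(L_{k+1})$, hence finite dimensional and therefore of type $FP_\infty$ (in particular $FP_3$). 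It also gives $\gamma_k(L_1)\oplus\ldots\oplus\gamma_k(L_k)\subseteq A$, and since $A$ modulo this ideal sits inside the finite-dimensional nilpotent algebra $\bigoplus_{i\leq k}L_i/\gamma_k(L_i)$, finite generation of $A$ follows easily (lift generators of each $L_i$ into $A$, add finitely many ideal generators of each $\gamma_k(L_i)$, and add finitely many elements whose image generates the nilpotent quotient).

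By contrast, your proposed route to the auxiliary hypothesis does not work as written: showing $Q$ finitely presented is strictly weaker than $FP_3$, and your K\"unneth suggestion for $H_2(A,K)$ does not apply since $A$ is not a direct sum. Once you have the $\gamma_k(L_i)\subseteq L$ observation, both difficulties evaporate simultaneously.
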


\begin{proof} 1. We claim first that $\gamma_{k-1}(L_j) \subseteq L$ for every $j$, assume from now on that $j = k$. Indeed let $l_1, \ldots, l_{k-1}$ be elements of $L_k$. Let $h_i$ be an element of $ L$ such that $p_{i, k}(h_i) = l_i \in L_k \subset L_i \oplus L_k$ for $1 \leq i \leq k-1$.
Then% the (left normed) as an element of $L_1 \oplus \ldots \oplus L_k$ whose $i$-th coordinate is 0 for $i \leq k-1$ and the $k$-th coordinate is 
$$(0,\ldots,0,[l_1, \ldots, l_{k-1}])=[h_1, \ldots, h_{k-1}]\in L.$$ 
 As the elements of the form $[l_1, \ldots, l_{k-1}] $ generate $\gamma_{k-1}(L_k) $ as a Lie algebra the claim follows.

2. We prove the theorem by induction on $k \geq 2$. When $k = 2$ there is nothing to prove so we assume $k \geq 3$. Let $Q=L_k/L\cap L_k.$ 
By the first paragraph $Q$ is a finitely generated nilpotent Lie algebra, hence of finite dimension. Moreover there is a short exact sequence
\begin{equation} \label{ses01} L \cap (L_1 \oplus \ldots \oplus L_{k-1}) \rightarrowtail  p_{1,2, \ldots , k-1} (L) \epi Q\end{equation}
where the right hand map is given by $(l_1,\ldots,l_{k-1})\mapsto q+L\cap L_k$ for $q\in L_k$ such that $(l_1,\ldots,l_{k-1},q)\in L$ (one easily checks that this is well defined). Then we see that $L$ is the fibre sum  of the short exact sequence (\ref{ses01}) and
$$L \cap L_k \mono L_k \epi Q.$$ 
Moreover, $p_{1,2, \ldots , k-1} (L) \leq L_1 \oplus \ldots \oplus L_{k-1}$ is a subdirect Lie sum that maps epimorphically onto $L_i \oplus L_j$ for $ 1 \leq i < j \leq k-1$. Then by induction   $p_{1,2, \ldots , k-1} (L)$ is finitely presented. If we show that  $L \cap (L_1 \oplus \ldots \oplus L_{k-1})$ is a finitely generated  Lie algebra we can apply   Theorem \ref{123} to deduce that $L$ is finitely presented.
To do that observe first  that 
\begin{equation} \label{subdirect1} \gamma_{k-1}(L_1) \oplus \ldots \oplus \gamma_{k-1}(L_{k-1}) \subseteq L \cap (L_1 \oplus \ldots \oplus L_{k-1}).
\end{equation}  Furthermore since $p_{i,k}(L) = L_i \oplus L_k$ we have that
$$p_i(L \cap (L_1 \oplus \ldots \oplus L_{k-1})) = L_i \hbox{ for }1 \leq i \leq k-1.$$
%Then (\ref{subdirect1}) and (\ref{subdirect2}) imply that $L \cap (L_1 \oplus \ldots \oplus L_{k-1})$ is a finitely generated Lie algebra. Follows an explanation : l
Now, let $Y_i$ be a finite subset of $L \cap (L_1
\oplus \ldots \oplus L_{k-1})$ such that $p_i(Y_i)$ generates $L_i$ and let
$Z_i$ be a finite subset of $\gamma_{k-1}(L_i)$ that generates it as an
ideal of $L_i$ (we may choose such a set because $L_i / \gamma_{k-1}(L_i)$ is finitely presented). Then the Lie algebra generated by $\cup_{1 \leq i \leq k-1}(Y_i \cup Z_i)$ contains $\gamma_{k-1} (L_1) \oplus \ldots \oplus \gamma_{k-1}(L_{k-1})$. Since $S = (L \cap  (L_1 \oplus \ldots \oplus L_{k-1})) / \gamma_{k-1} (L_1) \oplus \ldots \oplus \gamma_{k-1}(L_{k-1})$ is a Lie subalgebra  of the finitely generated nilpotent Lie algebra 
$(L_1/ \gamma_{k-1} (L_1)) \oplus \ldots \oplus (L_{k-1}/ \gamma_{k-1} (L_{k-1}))$ we deduce that $S$ is a finitely generated Lie algebra (it is even finite dimensional). Let $T$ be a finite subset of $L \cap (L_1 \oplus \ldots \oplus L_{k-1})$ whose image in $S$ is a finite generating set of $S$. Then $T \cup (\cup_{1 \leq i \leq k-1}(Y_i \cup Z_i))$ is a finite generating set of $L \cap (L_1 \oplus \ldots \oplus L_{k-1})$.
\end{proof} 
 Theorem \ref{projection} and Theorem \ref{fin-pres} imply immediately the following result. 
\begin{corollary} Let $L$ be a subdirect Lie sum of $L_1 \oplus \ldots \oplus L_k$ with $L \cap L_i \not= 0$ for $1\leq i\leq k$, where each $L_i$ is a non-abelian free Lie algebra. Then $L$ is finitely presented if and only if
$p_{i,j}(L) = L_i \oplus L_j$ for all $1 \leq i < j \leq k$. 
\end{corollary}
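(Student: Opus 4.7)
The plan is to derive this corollary as an immediate synthesis of Theorem \ref{projection} and Theorem \ref{fin-pres}: no new machinery is needed, only a careful matching of hypotheses in each direction of the biconditional.

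For the forward direction, assume $L$ is finitely presented. Then $L$ is in particular of type $FP_2$ (finite presentability implies $FP_2$ in the Lie algebra setting via the standard argument behind Lemma \ref{FP2}), and $L$ is finitely generated, which forces each $L_i = p_i(L)$ to be finitely generated as well. All hypotheses of Theorem \ref{projection} with $s = 2$ are then in place: the subdirect sum structure, non-abelian finitely generated free factors, the condition $L \cap L_i \neq 0$, and type $FP_2$ for $L$. Applying that theorem yields $p_{i,j}(L) = L_i \oplus L_j$ for every $1 \leq i < j \leq k$.

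For the converse, I would simply invoke Theorem \ref{fin-pres}. Each $L_i$ is a finitely generated non-abelian free Lie algebra, hence finitely presented; the pairwise surjectivity $p_{i,j}(L) = L_i \oplus L_j$ is exactly the standing hypothesis; and $L \cap L_i \neq 0$ is assumed. All the inputs of Theorem \ref{fin-pres} are satisfied, and it delivers that $L$ is finitely presented.

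There is no genuine obstacle in the argument; the only point worth noting is that in the forward direction one must first deduce finite generation of the $L_i$'s from finite generation of $L$ in order to legitimately invoke Theorem \ref{projection}, which requires the free factors to be finitely generated. Both directions together give the stated equivalence.
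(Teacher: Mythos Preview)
Your proposal is correct and matches the paper's approach exactly: the paper states only that the corollary follows immediately from Theorem \ref{projection} and Theorem \ref{fin-pres}, and you have simply spelled out the two directions. The one observation worth flagging is that in the converse you assert finite generation of the $L_i$ without it appearing in the corollary's stated hypotheses; this is an implicit standing assumption in the paper (and the corollary is false without it, as $L = L_1 \oplus L_2$ with infinitely generated free factors shows), so your treatment is in line with the paper's.
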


\section{On the homological $1$-$2$-$3$ Conjecture for Lie algebras}

In this section we show that homological versions of Theorems \ref{123} and \ref{fin-pres} hold true, where in the homological version of Theorem \ref{123} we assume further that $Q$ is finitely presented.  Basically, to do that we relax the hypothesis of the finite presentability to being of type $FP_2$.

\begin{theorem} The homological $1$-$2$-$3$ Conjecture for Lie algebras holds if in addition $Q$ is a finitely presented Lie algebra. Furthermore the condition that $Q$ is $FP_3$ could be substituted with $H_2(A,K)$ is finitely generated as $\U(Q)$-module.
\end{theorem}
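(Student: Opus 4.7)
The plan is to mimic the proof of Theorem~\ref{123} step by step, systematically replacing ``finitely presented'' by ``type $FP_2$'' and invoking Lemma~\ref{FP2} at each point where finite presentability was previously used. The additional hypothesis that $Q$ is finitely presented is needed because, with $L_1$ only of type $FP_2$, finite presentability of $Q$ is no longer automatic from the extension $A\mono L_1\epi Q$, yet it is essential for the bookkeeping that controls the ideal $B$ of the free cover of $L_2$.

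First I would carry out the analogue of Claim~1 from the proof of Theorem~\ref{123} to reduce to the case in which $L_2=F$ is a finitely generated free Lie algebra. Pick an epimorphism $\mu\colon F\epi L_2$ with $F$ finitely generated free and form the intermediate fibre Lie sum $P_0$ corresponding to $A\mono L_1\epi Q$ and $B_0\mono F\epi Q$, where $B_0=\ker(\pi_2\circ\mu)$. Assuming the theorem in the free case, $P_0$ is of type $FP_2$. The kernel of the natural surjection $P_0\epi P$ is $\{0\}\oplus\ker\mu\subset P_0$; since $L_2$ is $FP_2$, Lemma~\ref{FP2} applied to $L_2=F/\ker\mu$ gives that $\ker\mu/[\ker\mu,\ker\mu]$ is finitely generated as $\U(L_2)$-module, hence as $\U(P)$-module. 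This, together with the Lyndon--Hochschild--Serre spectral sequence for $\ker\mu\mono P_0\epi P$ and Theorem~\ref{LieBieri}, is enough to propagate $FP_2$ from $P_0$ to~$P$.

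For the free case, I would follow the strategy of Claims~2 and~3 of the proof of Theorem~\ref{123} with the following modifications. Since $Q$ is finitely presented, the ideal $B$ of $F$ is still generated by finitely many relators $\{r_i\}_{i\in I_0}$ as in (\ref{I_0}); the finite sets $A_0$, $R_2$ and $R_4$ are produced exactly as before. Since $L_1$ is only of type $FP_2$, Lemma~\ref{FP2} applied to the presentation of $L_1$ provides only a finite subset $R_3$ of the relation module whose images generate it as $\U(L_1)$-module modulo commutators, rather than a full finite presentation of $L_1$. Setting $\widetilde P=\langle X\cup A_0\mid R_2\cup R_3\cup R_4\rangle$, one verifies (again via Lemma~\ref{FP2} in reverse) that $\widetilde P$ is of type $FP_2$ rather than finitely presented. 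The epimorphism $\delta\colon\widetilde P\epi P$ with abelian kernel $\Delta$, and the verification that $\Delta$ is finitely generated as $\U(Q)$-module under either hypothesis (a) that $Q$ is $FP_3$ or (b) that $H_2(A,K)$ is finitely generated as $\U(Q)$-module, go through word-for-word as in Claims~2 and~3.

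Finally, $\widetilde P$ is of type $FP_2$ and $\Delta$ is abelian and finitely generated as $\U(P)$-module, so the same spectral sequence argument used in the reduction step gives that $P$ is of type $FP_2$. The main obstacle is the $FP_2$-bookkeeping: we must verify carefully that finitely many relators modulo commutators suffice (via Lemma~\ref{FP2}) to force $FP_2$, both in constructing $\widetilde P$ and in passing from $\widetilde P$ to $P$, and analogously from $P_0$ to $P$ in the reduction step. No new ideas beyond Lemma~\ref{FP2}, Theorem~\ref{LieBieri}, and the methods of the proof of Theorem~\ref{123} are needed.
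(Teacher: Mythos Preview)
Your reduction to the case $L_2=F$ free is fine and matches the paper. The gap is in the second step, where you claim that with the truncated finite $R_3$ (chosen via Lemma~\ref{FP2}) ``the epimorphism $\delta\colon\widetilde P\epi P$ with abelian kernel $\Delta$ \ldots\ goes through word-for-word as in Claims~2 and~3''. It does not. In the proof of Theorem~\ref{123} the identification $\Delta=\widetilde A\cap\widetilde B$ rests on the two isomorphisms $\widetilde P/\widetilde A\simeq P/A$ and $\widetilde P/\widetilde B\simeq P/B=L_1$. The first still holds, but with your finite $R_3$ one gets only $\widetilde P/\widetilde B\simeq\langle X\cup A_0\mid R_1\cup R_2\cup R_3\rangle=:\widetilde L_1$, which surjects onto $L_1$ with \emph{nontrivial} kernel $S$. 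Thus $\Delta\not\subseteq\widetilde B$; instead $\Delta$ sits in an extension
\[
\widetilde A\cap\widetilde B\ \rightarrowtail\ \Delta\ \twoheadrightarrow\ S,
\]
and $\Delta$ is not abelian. (Incidentally, your $\widetilde P$ is genuinely finitely presented, not merely ``of type $FP_2$ rather than finitely presented''; that phrase does not make sense here.)

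The paper repairs exactly this point by inserting the intermediate object you are missing: it first forms the finitely presented Lie algebra $\widetilde L_1$ above, notes that $S=\ker(\widetilde L_1\to L_1)$ satisfies $S=[S,S]$ by the choice of $R_3$, and then applies Theorem~\ref{123} \emph{as a black box} to the pair of sequences $\widetilde A_1\rightarrowtail\widetilde L_1\twoheadrightarrow Q$ and $B\rightarrowtail F\twoheadrightarrow Q$ to obtain a finitely presented fibre sum $\widetilde P_1$. The map $\widetilde P_1\to P$ now has kernel exactly $S$, and since $S$ is perfect the relation module of $P$ is a quotient of that of $\widetilde P_1$, giving $FP_2$. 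Your argument can be salvaged along the same lines (since $S=[S,S]$ forces $\Delta^{ab}$ to be a quotient of $\widetilde A\cap\widetilde B$), but you must recognise that $\Delta$ is not abelian and explicitly exploit the perfectness of $S$; the Claims~2--3 argument does not carry over verbatim.
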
 

\begin{proof} Let $ A \rightarrowtail L_1 \epi Q $ be a short exact sequence of Lie algebras with $A$ finitely generated, $L_1$ of type $FP_2$  and $Q$ finitely presented and let $B \rightarrowtail L_2 \epi Q$ be a short exact sequence of Lie algebras with $L_2$ of type $FP_2$. Denote by $P$ the fibre Lie sum of these sequences.

The proof that $P$ is $FP_2$ goes along similar lines as the proof of Theorem \ref{123} in the sense that
we begin by showing that it suffices to consider the case when $L_2 = F$ is a free Lie algebra with a finite free basis $X$ and then construct a suitable Lie algebra $\widetilde{P}_1$ which is finitely presented and that we will use to deduce that $P$ is $FP_2$. 

To see that we may assume that $L_2$ is free, observe that with the same notation as at the beginning of Theorem \ref{123}, we have short exact sequences
$$Ker(\mu)\mono F\epi L_2\, \hbox{ and }Ker(\mu)\mono P_0\epi P,$$
i.e. $\mu : F \to L_2$ is an epimorphism of Lie algebras, $F$ is a finitely generated free Lie algebra and  $P_0$ is the fibre sum  of $ A \rightarrowtail L_1 \epi Q $ and $B_0 \rightarrowtail F \epi Q$, where the epimorphism $F \epi Q$ is $\pi_2\circ \mu$. 
Let $F_0$ be a finitely generated free Lie algebra with an epimorphism  $\tau_0:F_0\epi P_0$. 
As we may assume that $P_0$ is $FP_2$, $Ker(\tau_0)^{ab}$ is finitely generated as $\U(P_0)$-module. But observe that composing $\tau_0$ with the epimorphism  $P_0\epi P$ yields an epimorphism $\tau:F_0\epi P$ and a short exact sequence
$$Ker(\tau_0)\mono Ker(\tau)\epi Ker(\mu). $$
Moreover, the short exact sequence $Ker(\mu)\mono F\epi L_2$ together with the fact that $L_2$ is $FP_2$ imply that also $Ker(\mu)^{ab}$ is finitely generated as $\U(L_2)$-module. From all this one deduces that \begin{equation} \label{fin-gen-1} Ker(\tau)^{ab} \hbox{ is finitely generated as }\U(P)-\hbox{module},
\end{equation} because there is a short exact sequence
$$M\mono Ker(\tau)^{ab}\epi Ker(\mu)^{ab}$$
with $M$ a quotient of $Ker(\tau_0)^{ab}$.
Note that (\ref{fin-gen-1}) implies that $P$ is of type $FP_2$.

From now on we assume that $L_2$ is a free Lie algebra $F$ with a finite free basis $X$. Let
$$Q = \langle X \mid  \{ r_i \}_{i \in I_0}  \rangle$$
be a finite presentation of the Lie algebra $Q$.
Then we have a presentation of $L_1$ 
$$
L_1 = \langle X \cup A_0 \mid   R_1 \cup R_2 \cup R_3 \rangle
$$
with  $A_0$ and $X$  both finite and
 $$ R_1 = \{ r_i(\underline{x}) - w_i (\underline{a}) \}_{i \in I_0}, R_2 = \{ [a_j, x] - v_{j,x}(\underline{a}) \}_{1 \leq j \leq k, x \in X} \hbox{ and } R_3 = \{ z_j(\underline{a}) \} _{j \in J}$$
where $I_0$ and $J$ are set of indices with $I_0$ finite and $J$ possibly infinite and $w_i (\underline{a}) $, $ v_{j,x}(\underline{a})$, $  z_j(\underline{a})$ are elements of the free Lie algebra with a free basis $A_0$. Let $R$ be the ideal of the free Lie algebra $F(X\cup A_0)$ with a free basis $X \cup A_0$ generated by $R_1 \cup R_2 \cup R_3$. Then we have a short exact sequence
$$R\mono F(X\cup A_0)\epi L_1$$
 and since $L_1$ is of type $FP_2$, $R/[R,R]$ is finitely generated as $\U(L_1)$-module. This means that
  there is a finite subset $J_0$ of $J$ such that 
\begin{equation} \label{FP_2condition}
R = R_0 + [R,R],
\end{equation}
where $R_0$ is the ideal of the free Lie algebra $F(X \cup A_0)$ with a free basis $X \cup A_0$ generated by $R_1 \cup R_2 \cup R_{3,0}$, where $R_{3,0} = \{ z_j(\underline{a}) \} _{j \in J_0} \subseteq R_3$.

Consider the Lie algebra $\widetilde{L}_1 = F(X \cup A_0) / R_0$. There is a natural projection
$$
\pi : \widetilde{L}_1 = F(X \cup A_0)/ R_0 \to L_1 = F(X \cup A_0) / R
$$ with kernel $S = R / R_0$. By (\ref{FP_2condition}) we have
\begin{equation} \label{kernelFP2}
S = [S,S].
\end{equation}
Since $R_1 \cup R_2$ are relations in $\widetilde{L}_1$ we deduce that the Lie subalgebra $\widetilde{A}_1$ of $\widetilde{L}_1$ generated by the elements of $A_0$ is an ideal such that $\widetilde{L}_1 / \widetilde{A}_1 \simeq Q$. Thus there is a short exact sequence of Lie algebras
\begin{equation}\label{tildeA1}
\widetilde{A}_1 \rightarrowtail \widetilde{L}_1 \buildrel{\widetilde{\pi}_1}\over\epi Q
\end{equation}
with both $\widetilde{L}_1$ and $Q$ finitely presented.
Denote by $\widetilde{P}_1$ the fibre Lie sum of the short exact sequences of Lie algebras  (\ref{tildeA1}) and 
$$B \rightarrowtail F \buildrel{\pi_2}\over\epi Q.$$
 As either $Q$ is $FP_3$ or $H_2(A, K)$ is finitely generated as $\U(Q)$-module, Theorem \ref{123} implies that $\widetilde{P}_1$ is finitely presented. We have
 $$\widetilde{P}_1=\{(h_1,h_2)\in \widetilde{L}_1\oplus F\mid \widetilde{\pi}_1(h_1)=\pi_2(h_2)\}$$
 and if $P$ is the fibre Lie sum of the original short exact sequences $A \rightarrowtail L_1 \buildrel{\pi_1}\over\epi Q$
and $B \rightarrowtail F \buildrel{\pi_2}\over\epi Q$, then
$$P=\{(h_1,h_2)\in L_1\oplus F\mid\pi_1(h_1)=\pi_2(h_2)\}.$$
This means that the map $(h_1,h_2)\mapsto (\pi(h_1),h_2)$ is a well defined epimorphism
$$\mu:\widetilde{P}_1\epi P$$
with kernel $Ker(\mu)=S.$
 
 It follows from the proof of Theorem \ref{123} that  $\widetilde{P}_1=F(X \cup A_0) / S_1$ for some ideal $S_1$ of $F(X \cup A_0)$ thus from the above considerations we deduce that $P = F(X \cup A_0) / S_2$ for  $S_2$ ideal  of $F(X \cup A_0)$ such that  $S_1 \subseteq S_2$ and $S_2/ S_1 = S = [S,S]$. Hence
$$
S_2 = S_1 + [S_2, S_2],
$$
so $S_2/ [S_2, S_2]$ is an epimorphic image of $S_1/ [S_1, S_1]$.
Since $\widetilde{P}_1$ is of type $FP_2$ we deduce that $S_1/ [S_1, S_1]$ is finitely generated as $\U(\widetilde{P}_1)$-module. Thus its epimorphic image $S_2/ [S_2, S_2]$ is finitely generated as $\U(P)$-module, hence $P$ is of type $FP_2$ as claimed.
\end{proof}

\begin{theorem} \label{FP2-} Let $$L \leq L_1 \oplus \ldots \oplus L_k$$  be a subdirect Lie sum with $L \cap L_i \not= 0$ and $L_i$ a Lie algebra of type $FP_2$ for $1\leq i\leq k$. Assume that $p_{i,j} (L) = L_i \oplus L_j$ for all $1 \leq i < j \leq k$, where  $p_{i,j} : L_1 \oplus \ldots \oplus L_k \to L_i \oplus L_j$ is the canonical projection. Then $L$ is a  Lie algebra of type $FP_2$.
\end{theorem}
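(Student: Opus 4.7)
The plan is to run the proof of Theorem \ref{fin-pres} almost verbatim, with two substitutions: the homological $1$-$2$-$3$ Theorem (just established) takes the place of the $1$-$2$-$3$ Theorem, and every auxiliary finite-generation step must be re-verified under only the $FP_2$ hypothesis on the $L_i$'s.

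I would induct on $k$. The cases $k=1$ and $k=2$ are immediate: when $k=2$ the hypothesis $p_{1,2}(L)=L_1\oplus L_2$ forces $L=L_1\oplus L_2$, which is of type $FP_2$ because the total complex of a tensor product of partial free resolutions of $K$ over $\U(L_1)$ and $\U(L_2)$ is a partial free resolution over $\U(L_1\oplus L_2)=\U(L_1)\otimes_K\U(L_2)$.

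For $k\geq 3$ I would proceed in the steps of Theorem \ref{fin-pres}. First, step 1 of that proof yields $\gamma_{k-1}(L_j)\subseteq L$ for every $j$, using only surjectivity of the double projections $p_{i,j}$. Setting $A:=L\cap(L_1\oplus\ldots\oplus L_{k-1})$ and $Q:=L_k/(L\cap L_k)$, the algebra $Q$ is a quotient of the finitely generated nilpotent algebra $L_k/\gamma_{k-1}(L_k)$, hence finite-dimensional, hence of type $FP_\infty$ and in particular both $FP_3$ and finitely presented. Next, form the short exact sequence $A\rightarrowtail p_{1,\ldots,k-1}(L)\epi Q$ exactly as in Theorem \ref{fin-pres}, so that $L$ is the fibre Lie sum of this sequence with $L\cap L_k\rightarrowtail L_k\epi Q$. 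The subdirect Lie sum $p_{1,\ldots,k-1}(L)\leq L_1\oplus\ldots\oplus L_{k-1}$ inherits all the hypotheses of the theorem with parameter $k-1$ (the conditions $p_{i,j}(\,\cdot\,)=L_i\oplus L_j$ and $\cdot\cap L_i\neq 0$ both pass through), so by the inductive hypothesis it is of type $FP_2$. Applying the homological $1$-$2$-$3$ Theorem to the two sequences just displayed then yields that $L$ is of type $FP_2$.

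The only delicate point---and the main obstacle---is the verification that $A$ is of type $FP_1$, which is a hypothesis of the homological $1$-$2$-$3$ Theorem. The argument in step 2 of Theorem \ref{fin-pres} invokes finite presentability of $L_i$ solely to pick a finite set $Z_i\subseteq\gamma_{k-1}(L_i)$ generating $\gamma_{k-1}(L_i)$ as an ideal of $L_i$. Under the weaker hypothesis that $L_i$ is of type $FP_2$ (hence $FP_1$, hence finitely generated), one still obtains such a $Z_i$ for free: the finitely many $(k-1)$-fold left-normed brackets of a finite generating set of $L_i$ already generate $\gamma_{k-1}(L_i)$ as an ideal. With this replacement the remainder of the finite-generation argument of Theorem \ref{fin-pres} carries over unchanged and delivers a finite generating set for $A$.
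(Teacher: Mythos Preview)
Your proof is correct and follows exactly the approach the paper intends: the paper's own proof of Theorem~\ref{FP2-} is a one-line instruction to rerun the proof of Theorem~\ref{fin-pres} with ``finitely presented'' replaced by ``type $FP_2$'' and the $1$-$2$-$3$ Theorem replaced by its homological version, and you have carried this out in full.

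One small remark on the ``main obstacle'' you identify. In the proof of Theorem~\ref{fin-pres} the existence of a finite set $Z_i$ generating $\gamma_{k-1}(L_i)$ as an ideal is justified by the finite presentability of $L_i/\gamma_{k-1}(L_i)$, not of $L_i$ itself; and $L_i/\gamma_{k-1}(L_i)$ is finitely presented simply because it is finitely generated nilpotent, hence finite-dimensional. That argument already goes through as soon as $L_i$ is finitely generated (type $FP_1$), so no modification is strictly needed. Your alternative---taking $Z_i$ to be the finite set of $(k-1)$-fold left-normed brackets in a finite generating set of $L_i$---is of course also valid and arguably more direct.
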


The proof of Theorem \ref{FP2-} can be obtained from the proof of Theorem \ref{fin-pres} by substituting everywhere finitely presented with type $FP_2$ and substituting the $1$-$2$-$3$ Conjecture for Lie algebras with the homological $1$-$2$-$3$ Conjecture for Lie algebras. 

Theorem \ref{projection} and Theorem \ref{FP2-} imply immediately the following result.

\begin{theorem} Let  $$L \leq L_1 \oplus \ldots \oplus L_k$$ be a subdirect Lie sum 
 with $L \cap L_i \not= 0$ and $L_i$ free, non-abelian, Lie algebra for all $i$. Then $L$ is finitely presented as a Lie algebra if and only if $L$ is of type $FP_2$.
 \end{theorem}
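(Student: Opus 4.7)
The plan is to treat the two implications separately. The forward direction (finitely presented $\Rightarrow$ type $FP_2$) is the formal one and will follow immediately from the definitions, while all the real content is in the converse.

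For the forward direction, I would note that if $L \cong F/R$ with $F$ a finitely generated free Lie algebra and $R$ an ideal normally generated by finitely many elements, then the beginning of the standard free resolution of $K$ over $\U(L)$ has $P_0 = \U(L)$ and $P_1 = \U(L)^m$ (where $m$ is the number of normal generators of $R$), both finitely generated, which is precisely the defining $FP_2$ condition.

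For the reverse direction, suppose $L$ is of type $FP_2$. First, Lemma \ref{FP1} gives that $L$ is finitely generated, so each projection $L_i = p_i(L)$ is a finitely generated non-abelian free Lie algebra, and hence finitely presented. Next I would apply Theorem \ref{projection} with $s=2$: its hypotheses (each $L_i$ a finitely generated non-abelian free Lie algebra, $L \cap L_i \neq 0$, $L$ of type $FP_2$) are all in place, and its conclusion yields $p_{i,j}(L) = L_i \oplus L_j$ for every $1 \leq i < j \leq k$. Finally, with the pairwise surjection property and the finite presentability of each $L_i$ now established, the hypotheses of Theorem \ref{fin-pres} are met, and one concludes that $L$ is finitely presented.

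The main obstacle, if there is one, is only a bookkeeping issue: one must verify that the type $FP_2$ hypothesis alone is strong enough to trigger Theorem \ref{projection}, i.e.\ that each $L_i$ is automatically finitely generated. This is immediate from the subdirect hypothesis together with Lemma \ref{FP1}, so in fact the argument is a direct chaining of three previously established results, with no new computation required.
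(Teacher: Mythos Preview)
Your reverse direction is correct and is exactly the paper's approach: from $FP_2$ use Theorem~\ref{projection} with $s=2$ to obtain $p_{i,j}(L)=L_i\oplus L_j$, then apply Theorem~\ref{fin-pres} (each $L_i$ is free of finite rank, hence finitely presented) to conclude that $L$ is finitely presented. Your observation that the $L_i$ are automatically finitely generated, via Lemma~\ref{FP1} and the subdirect hypothesis, is the right way to check the hypotheses.

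The forward direction is where you slip, though only in the bookkeeping, not in the conclusion. Two things are off. First, type $FP_2$ requires $P_0,P_1,P_2$ finitely generated, not just $P_0,P_1$; what you have written is the $FP_1$ condition. Second, in the standard resolution one has $P_1=\U(L)^n$ with $n$ the number of \emph{generators} of $F$, not the number of relators; the relators enter at the next stage. The clean statement is: if $L=F/R$ with $F$ free of rank $n$ and $R$ generated as an ideal by $m$ elements, then $R/[R,R]$ is generated by $m$ elements as a $\U(L)$-module (Lemma~\ref{FP2}), and one obtains a partial free resolution
\[
\U(L)^m \longrightarrow \U(L)^n \longrightarrow \U(L) \longrightarrow K \longrightarrow 0,
\]
which exhibits $L$ as $FP_2$. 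With that correction the forward implication is the standard one, and your overall argument is complete and coincides with the paper's.
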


\section{On a homological $n-n+1-n+2$ Conjecture for Lie algebras}

Although the homotopical $s$-$s+1$-$s+2$ Conjecture for groups remains open in general, Kuckuck has obtained some partial results in \cite{Beno}. 
More precisely, he shows that the  homotopical $s$-$s+1$-$s+2$ Conjecture for groups holds if {\bf one} of the following assumptions holds:
\begin{itemize}
\item[a)] if the second sequence splits; 

\item[b)] if the conjecture holds whenever $L_2$ is free. 
\end{itemize} 

The proof of these two reductions is based in the following Theorem which is one of the main results in \cite{Beno}. Recall that a group is of homotopical type $F_s$ for $s \geq 2$ if it is finitely presented and  of type $FP_s$. And a group is of type  $F_1$ if it is finitely generated.

\begin{theorem} \label{kuckuck1} \cite{Beno} Assume that we have a commutative diagram of groups with exact rows
$$\begin{tikzcd}A \arrow[r, tail]\arrow[d,"id_A"] &B_0 \arrow[r,twoheadrightarrow]\arrow[d,"\theta"]& C_0\arrow[d,"\nu"]\\
A \arrow[r, tail] &B \arrow[r,twoheadrightarrow]& C\\
\end{tikzcd}$$
where $A$ is $F_s$, $B_0$ is $F_{s+1}$ and $C$ is $F_{s+1}$ for some $s \geq 2$. Then $B$ has type $F_{s+1}$.
\end{theorem}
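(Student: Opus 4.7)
The plan is to compare the two Lyndon--Hochschild--Serre spectral sequences via $\theta$: use the $F_{s+1}$ property of $B_0$ to pin down the $\Z C_0$-module structure of $H_q(A,\Z)$, transport this along $\nu$ to the $\Z C$-module side, and then read off $FP_{s+1}$ for $B$ from the bottom-row spectral sequence. Finite presentability of $B$ is routine: since $s\geq 2$, both $A$ and $C$ are finitely presented, and the standard extension argument (lift generators of $C$, adjoin generators of $A$, include the relators of $A$, lifts of relators of $C$ modulo $A$-words, and the conjugation relations) yields a finite presentation of $B$. The remaining task is to show $B$ is $FP_{s+1}$, which by Bieri's criterion reduces to $H_i(B,\prod\Z B)=0$ for $1\leq i\leq s$ and every direct product $\prod \Z B$.

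The decisive observation is that $\theta$ is the identity on $A$. Hence for any $b_0\in B_0$ lifting $c_0\in C_0$ and any $a\in A$, conjugation gives $b_0 a b_0^{-1}=\theta(b_0)\,a\,\theta(b_0)^{-1}$, with $\theta(b_0)\in B$ lifting $\nu(c_0)$. Thus the $\Z C_0$-action on $H_q(A,\Z)$ inherited from the top row is precisely the restriction along $\nu$ of the $\Z C$-action inherited from the bottom row, and every finite-generation or $FP$-type statement over $\Z C_0$ passes to $\Z C$ by restriction of scalars along $\nu$.

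The core of the proof is then a module-finiteness lemma: since $A$ is $FP_s$ and $B_0$ is $FP_{s+1}$, the $\Z C_0$-module $H_q(A,\Z)$ is of type $FP_{s+1-q}$ for all $q\leq s$. One proves this by induction on $q$ using the top-row LHS spectral sequence
$$E^2_{p,q}=H_p(C_0,H_q(A,\Z))\Longrightarrow H_{p+q}(B_0,\Z),$$
but since a spectral sequence only exhibits subquotients of the abutment, it is cleaner to work at the chain level: pick a $\Z B_0$-free resolution of $\Z$ that is finitely generated through degree $s+1$ and apply the functor $\Z\otimes_{\Z A}-$ to obtain a $\Z C_0$-complex whose homology is $H_*(A,\Z)$, from which one extracts the $FP$-type of each $H_q(A,\Z)$ by an elementary dimension-shifting argument combined with the inductive hypothesis for lower $q$. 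Now view each $H_q(A,\Z)$ as a $\Z C$-module via $\nu$ and run the bottom-row LHS spectral sequence with coefficients in $M=\prod\Z B$. The $FP_s$-hypothesis on $A$ allows $H_q(A,-)$ to commute with direct products for $q\leq s$, and the $FP_{s+1-q}$ structure of $H_q(A,\Z B)$ over $\Z C$, together with $C$ being $FP_{s+1}$, yields vanishing of $H_p(C,H_q(A,M))$ in the range $p+q\leq s$. The spectral sequence then forces $H_i(B,M)=0$ for $1\leq i\leq s$, proving $B$ is $FP_{s+1}$.

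\emph{The main obstacle} is the inductive module-finiteness lemma: extracting from the bare $FP_{s+1}$ hypothesis on $B_0$ the full $FP$-type (not merely finite generation) of each $H_q(A,\Z)$ as a $\Z C_0$-module requires a delicate Bieri-type argument at the module level and careful bookkeeping of the two spectral sequences simultaneously.
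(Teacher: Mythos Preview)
The paper does not prove this theorem; it is quoted from \cite{Beno}, and the text explicitly remarks that Kuckuck's proof is homotopical (Borel construction and stacks). The paper's own homological substitute, Theorem~\ref{homologicalkuckuck}, is for Lie algebras and is established only under the extra hypothesis that $\theta$ is a monomorphism or an epimorphism. That proof does \emph{not} go through a module-finiteness lemma for $H_q(A,K)$: instead Proposition~\ref{homology1} reduces ``$B$ is $FP_{s+1}$'' to surjectivity of the single transgression-type differential $d^{s+1}_{s+1,0}$ in the LHS spectral sequence for $A\rightarrowtail B\twoheadrightarrow C$ with coefficients $\prod\U(B)$, and then naturality in $\theta$ (plus the mono/epi hypothesis) is used to push surjectivity from the $B_0$-sequence to the $B$-sequence.

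Your proposal has two genuine gaps. First, the transfer step ``FP-type over $\Z C_0$ passes to $\Z C$ along $\nu$'' is unjustified and false in general: the $\Z C_0$-action on $H_q(A,\Z)$ is the restriction along $\nu$ of the $\Z C$-action, so knowing a module is $FP_k$ over $\Z C_0$ says nothing about its $FP$-type over $\Z C$ without strong hypotheses on $\nu$. This is exactly why the paper (and \cite{Francismar}) needs $\theta$ mono or epi. Second, even granting your module-finiteness lemma over $\Z C$, your vanishing claim fails at the crucial spot $(p,q)=(0,s)$: since $A$ is only $FP_s$, the functor $H_s(A,-)$ need not commute with products, so $H_s(A,\prod\Z B)$ need not vanish, and $E^2_{0,s}=H_0(C,H_s(A,\prod\Z B))$ is typically nonzero. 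No finiteness property of $H_s(A,\Z)$ forces $H_0(C,-)$ of this module to be zero; the only mechanism that kills $E^\infty_{0,s}$ is the differential $d^{s+1}_{s+1,0}$, and analysing that differential is precisely the content of Proposition~\ref{homology1} and Theorem~\ref{homologicalkuckuck}. Your outline never engages with this differential, so the argument cannot close.
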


The proof of Theorem \ref{kuckuck1} is homotopical and uses the Borel construction and the theory of stacks. Recently in \cite{Francismar} a
 homological version of the above Theorem was proved based on non-homotopical methods. Here we show a Lie algebra version of Theorem \ref{kuckuck1} when $\theta$ is either a monomorphism or an epimorphism. The starting point is the following result.

\begin{proposition} \label{homology1}
Let  $B$ be a Lie algebra with an ideal $A$ such that $A$ is of type $FP_s$ and $C = B/A$ is a Lie algebra of type $FP_{s+1}$.  Then
$B$ is of type $FP_{s+1}$ if and only if the map
$$
d_{s+1,0}^{s+1} : H_{s+1} (C, \prod \U(C)) \to H_0(C, H_s(A, \prod \U(B)))
$$
is surjective for any  direct product, where $d_{s+1,0}^{s+1}$ is the differential arising from  the Lyndon-Hoschild-Serre spectral sequence 
$$
E_{p,q}^2 = H_p(C, H_q(A, \prod \U(B)))\Rightarrow H_{p+q}(B, \prod \U(B)).$$
\end{proposition}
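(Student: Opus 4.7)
The plan is to combine the Bieri-style criterion (Theorem~\ref{LieBieri}) with a careful bookkeeping on the Lyndon--Hochschild--Serre spectral sequence. By Theorem~\ref{LieBieri}, since $A$ and $C$ are both finitely generated (being $FP_1$), $B$ is finitely generated and $B$ is of type $FP_{s+1}$ iff $H_k(B,\prod \U(B))=0$ for $1\le k\le s$, where the direct product ranges over arbitrary index sets. So the whole proof reduces to computing these homology groups, for each choice of product, through the LHS spectral sequence
$$E^2_{p,q}=H_p(C,H_q(A,\textstyle\prod \U(B)))\Longrightarrow H_{p+q}(B,\textstyle\prod \U(B)).$$

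First I would input the PBW decomposition: choosing a $K$-linear section $C\to B$ gives $\U(B)\simeq \U(A)\otimes_K \U(C)$ as left $\U(A)$-modules, so $\U(B)$ is $\U(A)$-free. Hence $H_q(A,\U(B))=0$ for $q\ge 1$ and $H_0(A,\U(B))=\U(C)$. Next, since $A$ is of type $FP_s$, Theorem~\ref{LieBieri} tells me that $H_q(A,-)$ commutes with arbitrary direct products of copies of $\U(A)$ for $q\le s-1$; applying this to $\prod \U(B)$ (which, because $\U(B)$ is $\U(A)$-free, is also a product of copies of $\U(A)$ up to regrouping) yields
$$H_q(A,\textstyle\prod \U(B))\;\simeq\;\prod H_q(A,\U(B))\;=\;0\qquad(1\le q\le s-1),$$
and $H_0(A,\prod \U(B))=\prod \U(C)$. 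Using that $C$ is $FP_{s+1}$, Theorem~\ref{LieBieri} also gives $H_p(C,\prod \U(C))=0$ for $1\le p\le s$. Putting this together, the $E^2$-page has the following structure: row $q=0$ is $H_p(C,\prod\U(C))$ which vanishes for $1\le p\le s$; rows $1\le q\le s-1$ are identically zero; only row $q=s$, namely $E^2_{p,s}=H_p(C,H_s(A,\prod\U(B)))$, carries undetermined information.

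Now I would look diagonal by diagonal. For $1\le n\le s-1$, every $E^2_{p,q}$ with $p+q=n$ is zero (the relevant $q$ lies in $[0,s-1]$ and either $q\ge 1$ or $q=0$ with $1\le p\le s$), so $H_n(B,\prod \U(B))=0$ automatically. On the diagonal $n=s$, all terms with $1\le q\le s-1$ vanish; $E^2_{s,0}=H_s(C,\prod\U(C))=0$; and the only possibly non-zero term is $E^2_{0,s}=H_0(C,H_s(A,\prod\U(B)))$. I then track the incoming and outgoing differentials at this corner: any $d_r$ leaving $E^r_{0,s}$ lands in $E^r_{-r,s+r-1}=0$, and any $d_r$ arriving at $E^r_{0,s}$ comes from $E^r_{r,s-r+1}$; for $2\le r\le s$ the second index lies in $[1,s-1]$ where $E^2$ vanishes, while the source $E^r_{r,s-r+1}$ is a subquotient of $E^2_{r,s-r+1}=0$. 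Hence $E^{s+1}_{0,s}=E^2_{0,s}$. Similarly, differentials out of $E^r_{s+1,0}$ for $2\le r\le s$ land in $E^r_{s+1-r,r-1}$ with $r-1\in[1,s-1]$, which is zero, so $E^{s+1}_{s+1,0}=E^2_{s+1,0}=H_{s+1}(C,\prod\U(C))$. The only remaining non-trivial differential is $d^{s+1}_{s+1,0}\colon E^{s+1}_{s+1,0}\to E^{s+1}_{0,s}$, and $E^{\infty}_{0,s}$ equals the cokernel of this map. Since all other contributions to the filtration of $H_s(B,\prod\U(B))$ vanish, we conclude $H_s(B,\prod\U(B))\simeq\mathrm{coker}(d^{s+1}_{s+1,0})$.

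Combining, $H_k(B,\prod \U(B))=0$ for all $1\le k\le s$ iff for every direct product the differential $d^{s+1}_{s+1,0}$ is surjective. By the Bieri criterion this is exactly the statement that $B$ is of type $FP_{s+1}$, finishing the proof. The main obstacle I anticipate is the careful commutation of $H_q(A,-)$ with the specific direct product $\prod\U(B)$: this needs either the $\U(A)$-freeness of $\U(B)$ via PBW to reduce to products of copies of $\U(A)$, or a direct appeal to the Bieri criterion for $A$; I would spell this out explicitly so that the hypothesis ``$A$ is $FP_s$'' is used in exactly the right form. Everything else is standard spectral sequence bookkeeping once the $E^2$-page has been pinned down.
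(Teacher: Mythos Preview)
Your proposal is correct and follows essentially the same route as the paper: both use the Bieri criterion (Theorem~\ref{LieBieri}), the $\U(A)$-freeness of $\U(B)$ via PBW, and the resulting vanishing pattern on the $E^2$-page to reduce everything to the single differential $d^{s+1}_{s+1,0}$. The only cosmetic difference is that the paper first notes $B$ is $FP_s$ (so needs only $H_s(B,\prod\U(B))=0$), whereas you verify $H_n(B,\prod\U(B))=0$ for $1\le n\le s-1$ directly from the spectral sequence; and you should also remark (as the paper does) that no differentials enter $E^r_{s+1,0}$ since their source has negative $q$-degree, but this is routine.
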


\begin{proof} Using that  $A$ is a Lie algebra of type  $FP_s$ and that $\U(B)$ is free as $\U(A)$-module we obtain by Theorem \ref{LieBieri} 
$$H_q(A, \prod \U(B)) = \prod H_q(A, \U(B)) = \prod 0 = 0 \hbox{ for }1 \leq q \leq s-1 ,$$ hence
\begin{equation} \label{spectral}
E_{p,q}^{j} = 0 \hbox{ for }1 \leq q \leq s-1, 2 \leq j \leq \infty. \end{equation}
Since $A$ is finitely generated by Theorem \ref{LieBieri} we get  $$H_0(A, \prod \U(B)) \simeq \prod H_0(A, \U(B)) = \prod \U(B/A) = \prod \U(C).$$ 
Since $C$ is of type $FP_{s+1}$ 
$$ \label{new01} 
E_{p,0}^2 = H_p(C, \prod \U(C)) \simeq \prod  H_p(C, \U(C)) = \prod 0 = 0 \hbox{ for } 1 \leq p \leq s,$$
so 
\begin{equation} \label{novo001}  E^{j}_{p,0} = \hbox{ for } 1 \leq p \leq s, 2 \leq j \leq \infty.
\end{equation}
Then by (\ref{spectral}) and (\ref{novo001})
\begin{equation} \label{novo}
E_{p, s-p}^j = 0 \hbox{ for every } 0<p\leq s, 2 \leq j \leq \infty.
\end{equation}
 By (\ref{spectral}) 
$$
E_{2,s-1}^2 = E_{3,s-2}^3 = \ldots = E_{i, s+1-i}^i = \ldots = E_{s,1}^s = 0,
$$
hence 
\begin{equation} \label{new02}
d_{i, s+1 - i}^i : E_{i, s+1-i}^i \to E_{0,s}^{i}  \hbox{ is the zero map for } 2 \leq i \leq s.\end{equation}
On the other hand the same thing happens for $i \geq s+2$ because then $E_{i, s+1-i}^i=0$ since $s+1 - i < 0$.
Note that the differential
\begin{equation} \label{new03}
d_{0,s}^i : E_{0,s}^i \to E_{-i, s-1 + i}^i = 0 \hbox{ is always zero.}
\end{equation}
Then (\ref{new02}) and (\ref{new03}) imply that $ E_{0,s}^{i} = E_{0,s}^{i+1} $ for every $2 \leq i \leq s $ and $i \geq s+2$, so
\begin{equation} \label{new04}
E^{2}_{0,s} = E_{0,s}^{s+1}, E_{0,s}^{s+2} = E^{\infty}_{0,s}
\end{equation}
Since $A$ and $C$ are Lie algebras of type $FP_s$ we have that $B$ is of type $FP_s$. Then by Theorem \ref{LieBieri} 
\begin{equation} \label{new05} \hbox{the Lie algebra }B \hbox{ is of type }FP_{s+1}\hbox{ if and only if }H_s(B, \prod \U(B)) = 0.\end{equation} By the convergence of the spectral sequence  and (\ref{novo})
\begin{equation} \label{new06}
H_s(B, \prod \U(B))  \simeq  E^{\infty}_{0,s}. 
\end{equation}
By (\ref{new04}), (\ref{new05}) and (\ref{new06}) 
$$\hbox{the Lie algebra }B \hbox{ is of type }FP_{s+1}\hbox{ if and only if }E_{0,s}^{s+2} = 0.$$ 
Since $d^{s+1}_{0,s} : E_{0,s}^{s+1} \to E^{s+1}_{-s-1, 2s} = 0$ is the zero map, $E_{0,s}^{s+2}$ is the cokernel of the differential $d_{s+1,0}^{s+1}  : E_{s+1,0}^{s+1} \to E_{0,s}^{s+1}$. Thus $E_{0,s}^{s+2} = 0$ if and only if  $d_{s+1,0}^{s+1}$ is surjective.

Finally note that by (\ref{spectral}) all differentials that leave $E_{s+1,0}^j$ , for $2 \leq j \leq s$ are zero. Comparing bidegrees we  get that all differentials that enter $E_{s+1,0}^j$ , for $j \geq 2$ are zero. Hence $$E_{s+1,0}^{s+1} = E_{s+1,0}^{2} = H_{s+1}(C, H_0(A, \prod \U(B))) =$$ $$ H_{s+1}(C, \prod H_0(A, \U(B))) = H_{s+1}(C, \prod \U(C)).$$ By (\ref{new04})
$$
E^{s+1}_{0,s} = E^2_{0,s} =  H_0(C, H_s(A, \prod \U(B))).
$$
Hence the domain and target of $d_{s+1,0}^{s+1}$ are as described in the statement of the result.
\end{proof}

\begin{theorem} \label{homologicalkuckuck} Assume that we have a commutative diagram of Lie algebras with exact rows
$$\begin{tikzcd}A \arrow[r, tail]\arrow[d,"id_A"] &B_0 \arrow[r,twoheadrightarrow]\arrow[d,"\theta"]& C_0\arrow[d,"\nu"]\\
A \arrow[r, tail] &B\arrow[r,twoheadrightarrow]& C\\
\end{tikzcd}$$
where $A$ is $F_s$, $B_0$ is $F_{s+1}$ and $C$ is $F_{s+1}$ for some $s \geq 1$. Assume that at least one of the following conditions holds true :
\begin{itemize}
\item[a)] $\theta$ is a monomorphism;

\item[b)] $\theta$ is an epimorphism.
\end{itemize}
Then $B$ has type $FP_{s+1}$.
\end{theorem}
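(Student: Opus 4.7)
The plan is to handle the two cases by different arguments, after a preliminary reduction. As a first step I would deduce some $FP$-properties by repeated applications of the Lie algebra analog of Bieri's Proposition~2.7 from \cite[\S 2]{Bieribook} (whose proof, as already noted in the discussion of Theorem~\ref{FPinfty}, carries over verbatim with group algebras replaced by enveloping algebras): from $A\rightarrowtail B_0\epi C_0$ with $A$ of type $FP_s$ and $B_0$ of type $FP_{s+1}$ one obtains that $C_0$ is of type $FP_{s+1}$, and from $A\rightarrowtail B\epi C$ with $A$ of type $FP_s$ and $C$ of type $FP_s$ one obtains $B$ of type $FP_s$ (so Proposition~\ref{homology1} is applicable to the lower row).

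For case (b), where $\theta$ is an epimorphism, the argument is entirely elementary. A diagram chase shows that $\nu$ is surjective and that $\ker\theta$ maps isomorphically onto $\ker\nu$, because $\ker\theta\cap A=0$ (as $\theta|_A=\mathrm{id}_A$). Bieri's analog applied to $\ker\nu\rightarrowtail C_0\epi C$, with both $C_0$ and $C$ of type $FP_{s+1}$, yields that $\ker\nu\simeq\ker\theta$ is of type $FP_s$. A further application to $\ker\theta\rightarrowtail B_0\epi B$, with $\ker\theta$ of type $FP_s$ and $B_0$ of type $FP_{s+1}$, then gives that $B$ is of type $FP_{s+1}$.

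For case (a), where $\theta$ is a monomorphism, the idea is to apply Proposition~\ref{homology1} to both rows with the same coefficients $\prod\U(B)$ (viewed as a $\U(B_0)$-module via $\theta$ for the upper row) and to compare the resulting top differentials on the $E^{s+1}$-page. The morphism of Lie algebra extensions given by $\mathrm{id}_A,\theta,\nu$ induces a morphism of Lyndon-Hochschild-Serre spectral sequences. Because $\theta$ and $\nu$ are injective, Poincar\'e-Birkhoff-Witt implies that $\U(B)$ is a free $\U(B_0)$-module and that $\U(C)$ is a free $\U(C_0)$-module, hence $H_n(B_0,\U(B))=0$ and $H_p(C_0,\U(C))=0$ for every $n,p\geq 1$. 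Combined with the fact that $B_0$ and $C_0$ are of type $FP_{s+1}$ (so low-degree homology commutes with direct products), this yields $H_s(B_0,\prod\U(B))=0$ and $H_p(C_0,\prod\U(C))=0$ for $1\leq p\leq s$. These are exactly the vanishings required to run the argument of Proposition~\ref{homology1} for the upper row with the non-canonical coefficients $\prod\U(B)$, and one concludes that the corresponding differential $\tilde d^{s+1}_{s+1,0}$ for the upper row is surjective.

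To finish, we transfer surjectivity of $\tilde d$ to $d=d^{s+1}_{s+1,0}$ via the comparison square provided by the morphism of spectral sequences. The key observation is that the bottom horizontal map $H_0(C_0,H_s(A,\prod\U(B)))\to H_0(C,H_s(A,\prod\U(B)))$ is always surjective, being the tautological quotient $N_s/C_0N_s\epi N_s/CN_s$ with $N_s=H_s(A,\prod\U(B))$ (the $C_0$-action on $N_s$ factors through $\nu$). A short diagram chase then gives that $d$ is surjective, and Proposition~\ref{homology1} delivers that $B$ is of type $FP_{s+1}$. The main obstacle is the spectral sequence comparison in case (a): one must verify that Proposition~\ref{homology1}'s argument really does apply to the upper row with coefficients $\prod\U(B)$ rather than the canonical $\prod\U(B_0)$, and this is exactly what the freeness of $\U(B)$ over $\U(B_0)$ and of $\U(C)$ over $\U(C_0)$, i.e.\ the injectivity of $\theta$ together with PBW, provides.
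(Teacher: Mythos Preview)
Your case (a) is correct and is essentially the paper's argument: both run Proposition~\ref{homology1} for the upper row with coefficients $\prod\U(B)$ (using that $\U(B)$ is free over $\U(B_0)$ by PBW), obtain surjectivity of $\tilde d^{s+1}_{s+1,0}$, and transfer this to $d^{s+1}_{s+1,0}$ via the comparison square, noting that the map on $H_0$-terms is the canonical surjection of coinvariants.

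Your case (b), however, has a genuine gap. The assertion that from $\ker\nu\rightarrowtail C_0\epi C$ with $C_0$ and $C$ both of type $FP_{s+1}$ one deduces that $\ker\nu$ is of type $FP_s$ is \emph{false}, and is not what Bieri's Proposition~2.7 says. A concrete counterexample: take $C_0$ the free Lie algebra on two generators and $C$ the one-dimensional abelian Lie algebra; both are of type $FP_\infty$, yet the kernel is a free Lie algebra of infinite rank (its abelianisation is isomorphic to $\U(C)\simeq K[t]$, which is infinite-dimensional), hence not even finitely generated, i.e.\ not $FP_1$. So your chain of reductions for (b) collapses at the very first step; the subsequent step (kernel $FP_s$ and total space $FP_{s+1}$ imply quotient $FP_{s+1}$) happens to be valid, but you never reach it with a correct hypothesis.

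The paper handles case (b) by a completely different route, parallel to case (a): it again compares LHS spectral sequences, this time with coefficients $\prod\U(B_0)$ upstairs and $\prod\U(B)$ downstairs, and the extra work is to show that the induced map $H_s(A,\prod\U(B_0))\to H_s(A,\prod\U(B))$ is surjective. This is done via the long exact sequence in $H_*(A,-)$ associated to $\U(B_0)T\rightarrowtail\U(B_0)\epi\U(B)$ (where $T=\ker\theta$), using freeness of $\U(B_0)$ and $\U(B)$ over $\U(A)$ together with $A$ being $FP_s$ to control the relevant terms. Once that surjectivity is in hand, the same comparison-square argument as in case (a) finishes the proof.
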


\begin{proof}  

\noindent{\bf{Case a): when $\theta$ is mono.}} We view $B_0$ as a Lie subalgebra of $B$, so $\U(B)$ is a free $\U(B_0)$-module. Note that for any direct product $ \prod \U(B)$, repeating the proof of Proposition \ref{homology1} for  the spectral sequence associated  to the short exact sequence $A \rightarrowtail B_0 \twoheadrightarrow C_0$, 
 $$
\widetilde{E}_{p,q}^2 = H_p(C_0, H_q(A, \prod \U(B)))\Rightarrow H_{p+q}(B_0, \prod \U(B))$$ we obtain that the differential 
$$
\widetilde{d}_{s+1,0}^{s+1} : H_{s+1} (C_0, \prod \U (C)) \to H_0(C_0, H_s(A, \prod \U(B)))
$$
is surjective if and only if  $H_s(B_0, \prod \U(B)) = 0$. Since $B_0$ is $FP_{s+1}$ we have  $$H_s(B_0, \prod \U(B)) =   \prod H_s(B_0, \U(B))  = \prod 0 = 0,$$ hence
\begin{equation} \label{surj01} 
\widetilde{d}_{s+1,0}^{s+1}  \hbox{ is surjective.}
\end{equation}
\noindent
 Furthermore $\nu$ induces a commutative diagram
$$\begin{tikzcd} H_{s+1} (C_0, \prod \U (C) )\arrow[r,"\widetilde{d}_{s+1,0}^{s+1}"]  \arrow[d,"\mu_1"]& H_0(C_0, H_s(A, \prod \U(B)))\arrow[d,"\mu_2"]  \\
H_{s+1} (C, \prod \U(C))   \arrow[r,"d_{s+1,0}^{s+1}"] &  H_0(C, H_s(A, \prod \U(B)))  \end{tikzcd},
$$ 
\noindent
where ${d}_{s+1,0}^{s+1} $ is the differential of the LHS spectral sequence $$
E_{p,q}^2 = H_p(C, H_q(A, \prod \U(B)))\Rightarrow H_{p+q}(B, \prod \U(B)).$$ 
associated to the short exact sequence $A \rightarrowtail B \twoheadrightarrow C$. The vertical maps are induced by $\nu$. Since $\widetilde{d}_{s+1,0}^{s+1}$ and $\mu_2$ are both surjective maps the differential ${d}_{s+1,0}^{s+1}$ is surjective. Then by Proposition \ref{homology1}, $B$ is a Lie algebra of type $FP_{s+1}$.
\

\noindent{\bf{Case b): when $\theta$ is epi.}} Set $T = Ker(\theta)$ and consider the short exact sequence of left $\U(A)$-modules
\begin{equation} \label{star0}
\U(B_0) Aug(\U( T)) \rightarrowtail \U( B_0) \epi \U(B) 
\end{equation}
where $Aug(\U( T)) = T \U(T) = \U(T) T$, hence $\U(B_0) Aug(\U( T))  = \U(B_0)  \U(T) T = \U(B_0) T$. For any  direct product we get another short exact sequence
of $\U(A)$-modules
$$
\prod \U(B_0) T  \rightarrowtail  \prod \U( B_0) \epi  \prod \U(B). 
$$
Consider the associated long exact sequence in homology
\begin{equation} \label{star}
\ldots \to H_{s}(A, \prod  \U( B_0) )  \to  H_s(A,  \prod \U( B) ) \to H_{s-1}(A,   \prod \U(  B_0) T )  \to \ldots
\end{equation}

\medskip
{\bf Claim.} {\it The map 
\begin{equation} \label{claim00}
H_{s}(A, \prod  \U( B_0) )  \to  H_s(A,  \prod \U( B) ) \hbox{ is an epimorphism}.\end{equation} }

\medskip

{\bf Proof of the Claim:} Assume first that $s\geq 2$. Recall that both $\U(B)$ and $\U(B_0)$ are free $\U(A)$-modules, hence 
\begin{equation} \label{zero1} H_s(A,  \U( B) ) = 0 = H_{s-1}(A,  \U( B_0) ) \hbox{ for }{s \geq 2}. \end{equation}
From the long exact sequence of homology associated to (\ref{star0})
\begin{equation}\label{star2}
\ldots \to H_s(A,  \U( B_0) ) \to H_s(A,  \U( B )) \to H_{s-1}(A,   \U(B_0) T )  \to H_{s-1}(A,  \U(B_0 )) \to \ldots
\end{equation}
and by (\ref{zero1}), we deduce that $$H_{s-1}(A,   \U(B_0) T ) =0 \hbox{ for } s \geq 2.$$
 Since $A$ is of type $FP_s$, we have
$$  H_{s-1}(A,   \prod  \U(B_0) T )  =  \prod H_{s-1}(A, \U(B_0) T ) = \prod 0 = 0$$
and the claim follows by the long exact sequence (\ref{star}).

For the case when $s=1$ we consider again the end of the sequence (\ref{star2})
$$
0 = H_1(A,  \U( B )) \to H_{0}(A,   \U(B_0) T  ) \to H_{0}(A,  \U(B_0) ) \to  H_{0}(A,  \U(B) )  \to 0,
$$
where $0 = H_1(A,  \U( B ))$ since $\U(B)$ is a free $\U(A)$-module. 
As before, as $A$ is finitely generated, the functor $H_0(A, - )$ commutes with direct products, so we get a short exact sequence
\begin{equation} \label{star12}
 H_{0}(A,  \prod \U(B_0) T  ) \rightarrowtail H_{0}(A, \prod \U (B_0) ) \epi  H_{0}(A, \prod \U( B) ).
\end{equation}
This together with the long exact sequence (\ref{star})
 completes the proof of the claim.

\medskip

The epimorphism (\ref{claim00}) together with the epimorphism $\nu : C_0 \to C$ induced by $\theta$, induce a surjective homomorphism $$\mu_2 : H_0(C_0, H_s(A, \prod \U( B_0)))  \to H_0(C, H_s(A, \prod \U( B))). $$  
Consider the commutative diagram 

$$\begin{tikzcd} H_{s+1} (C_0, \prod \U (C_0) ) \arrow[r,"\widetilde{d}_{s+1,0}^{s+1}"]  \arrow[d,"\mu_1"]& H_0(C_0, H_s(A, \prod \U( B_0)))\arrow[d,"\mu_2"]  \\
H_{s+1} (C, \prod \U(C))   \arrow[r,"d_{s+1,0}^{s+1}"] &  H_0(C, H_s(A, \prod \U(B)))  \end{tikzcd},
$$
where the horizontal maps are differential from Lyndon-Hoschild-Serre spectral sequences and the vertical maps are induced by the homomorphism $\nu$. 
Since $B_0$ is a Lie algebra of type $FP_{s+1}$ by Proposition  \ref{homology1} the differential  $\widetilde{d}_{s+1,0}^{s+1}$ is surjective. Since $\mu_2$ is surjective,  $\mu_2 \widetilde{d}_{s+1,0}^{s+1} = {d}_{s+1,0}^{s+1} \mu_1$ is surjective, hence ${d}_{s+1,0}^{s+1}$ is surjective. Then applying again Proposition \ref{homology1} we deduce that $B$ is a Lie algebra  
of type $FP_{s+1}$.
\end{proof}

\begin{proposition} \label{split} The homological $s$-$s+1$-$s+2$ Conjecture for Lie algebras  holds if one of the following  conditions is true :
\begin{itemize}
\item[a)] the second sequence splits;  
\item[b)]  we know that the conjecture holds whenever $L_2$ is a free Lie algebra.
\end{itemize}
\end{proposition}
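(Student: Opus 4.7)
The plan is to deduce both parts from direct applications of Theorem \ref{homologicalkuckuck}, by constructing in each case a commutative diagram of short exact sequences of Lie algebras that compares the fibre sum $P$ with a related Lie algebra for which type $FP_{s+1}$ is already known.

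For part (a), suppose the sequence $B \rightarrowtail L_2 \buildrel{\pi_2}\over\epi Q$ splits with section $\sigma : Q \to L_2$. I would define a Lie algebra monomorphism $\iota : L_1 \to P$ by $\iota(h_1) = (h_1, \sigma(\pi_1(h_1)))$, and note that the second-coordinate projection $P \twoheadrightarrow L_2$ has kernel $\{(a, 0) : a \in A\} \cong A$, yielding a short exact sequence $A \rightarrowtail P \twoheadrightarrow L_2$. The commutative diagram
$$\begin{tikzcd}A \arrow[r, tail]\arrow[d,"id_A"] & L_1 \arrow[r,twoheadrightarrow,"\pi_1"]\arrow[d,"\iota"]& Q \arrow[d,"\sigma"]\\
A \arrow[r, tail] & P \arrow[r,twoheadrightarrow]& L_2\\
\end{tikzcd}$$
has all vertical arrows monomorphisms, so Theorem \ref{homologicalkuckuck}(a), applied with $B_0 = L_1$, $C_0 = Q$, $B = P$, $C = L_2$, would yield that $P$ is of type $FP_{s+1}$, using that $A$ is $FP_s$ and $L_1, L_2$ are $FP_{s+1}$.

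For part (b), since $L_2$ is of type $FP_{s+1}$, and hence finitely generated, I would fix an epimorphism $\mu : F \twoheadrightarrow L_2$ from a finitely generated (hence $FP_{\infty}$) free Lie algebra $F$, set $\rho = \pi_2 \circ \mu : F \twoheadrightarrow Q$, and let $B_0 = \ker \rho$. Using Theorem \ref{FPinfty} and the Lie algebra version of the standard dimension-shifting argument in the short exact sequence $B_0 \rightarrowtail F \twoheadrightarrow Q$ with $F$ of type $FP_{\infty}$ and $Q$ of type $FP_{s+2}$, I would conclude that $B_0$ is of type $FP_{s+1}$. Let $P_0$ be the fibre Lie sum of $A \rightarrowtail L_1 \twoheadrightarrow Q$ and $B_0 \rightarrowtail F \twoheadrightarrow Q$. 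Since $F$ is free, the hypothesis of part (b) applies and $P_0$ is of type $FP_{s+1}$. The surjection $\mu$ induces an epimorphism $\theta : P_0 \twoheadrightarrow P$, $(h_1, f) \mapsto (h_1, \mu(f))$, fitting into the commutative diagram
$$\begin{tikzcd}A \arrow[r, tail]\arrow[d,"id_A"] & P_0 \arrow[r,twoheadrightarrow]\arrow[d,"\theta"]& F \arrow[d,"\mu"]\\
A \arrow[r, tail] & P \arrow[r,twoheadrightarrow]& L_2\\
\end{tikzcd}$$
in which $\theta$ and $\mu$ are epimorphisms, so Theorem \ref{homologicalkuckuck}(b) would conclude that $P$ is of type $FP_{s+1}$.

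The remaining checks are formal: exactness of both rows in each diagram, commutativity of the squares, and the auxiliary fact that $B_0$ inherits type $FP_{s+1}$ from the short exact sequence $B_0 \rightarrowtail F \twoheadrightarrow Q$. The real content of the argument lies in recognising that Theorem \ref{homologicalkuckuck} was precisely tailored so that the fibre-sum construction produces diagrams of the required form, once either a section (case (a)) or a free cover (case (b)) is available, and no further homological machinery beyond that theorem is needed.
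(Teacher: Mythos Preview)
Your proof is correct and follows essentially the same approach as the paper: in both cases you build the appropriate commutative diagram of short exact sequences and invoke Theorem \ref{homologicalkuckuck}, exactly as the paper does (the paper phrases the construction of $\iota$ and $\theta$ by reference to Lie algebra analogues of results in \cite{Beno}, whereas you write the maps down explicitly). One minor remark: your observation that $B_0$ is of type $FP_{s+1}$ is correct but unnecessary, since the hypothesis in part (b) already guarantees $P_0$ is $FP_{s+1}$ once $F$ is free and the remaining hypotheses of the conjecture are satisfied; also, Theorem \ref{FPinfty} is not quite the right reference for that dimension-shifting claim.
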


\begin{proof} 

\noindent {\bf Case a): when the second sequence splits.} This means that $L_2 = B \leftthreetimes Q$ with $B$ an ideal of $L_2$. A Lie algebra version of the  proof of \cite[corollary~4.6]{Beno} implies that there is a monorphism $L_1 \to P = B \leftthreetimes L_1$ whose restriction to $A$ is the identity map. Then we get a commutative diagram of Lie algebra homomorphisms with exact rows

$$\begin{tikzcd}A \arrow[r, tail]\arrow[d,"1_d"] &L_1 \arrow[r,twoheadrightarrow]\arrow[d,"\theta"]& Q\arrow[d,"\nu"]\\
A \arrow[r, tail] &P\arrow[r,twoheadrightarrow]& L_2\\
\end{tikzcd}$$
with $\theta$ a monomorphism, so we can apply Theorem \ref{homologicalkuckuck} a).

\noindent {\bf Case b): when we know that the result holds true if $L_2$ is free.} Let $L_2$ be any Lie algebra of type $FP_{s+1}$ and $F$ a finitely generated free Lie algebra mapping onto $L_2$. We can apply a Lie algebra version of the proof of \cite[proposition~ 4.8]{Beno} to get a commutative diagram of Lie algebra homomorphisms with exact rows
$$\begin{tikzcd}A \arrow[r, tail]\arrow[d,"1_d"] &P' \arrow[r,twoheadrightarrow]\arrow[d,"\theta"]& F\arrow[d,"\nu"]\\
A \arrow[r, tail] &P\arrow[r,twoheadrightarrow]& L_2\\
\end{tikzcd},$$
where $P'$ is the fibre Lie sum of the short exact sequences of Lie algebras $A\mono L_1\epi Q$ and $B'\mono F\epi Q$. Since $\theta$ is an epimorphism we get the result 
using Theorem \ref{homologicalkuckuck} b).
\end{proof}

{\bf Remark.} In the above when  the second sequence splits it is enough to assume that $Q$ has type $FP_{s+1}$.

\medskip
%We finish with a weak version of the $s-s+1-s+2$ Theorem.

%\begin{theorem} Let $A \rightarrowtail L_1 \epi Q$ and $B \rightarrowtail L_2 \epi Q$ be short exact sequences of Lie algebras such that  $\dim H_i(A, K) < \infty$ for $i \leq s$, $\dim H_j(L_1, K) < \infty$, $\dim H_j(L_2, K) < \infty$ for $j \leq s+1$ and $\dim H_t(Q, K) < \infty$ for $t \leq s+2$. Then for the fibre Lie sum $P$ of the above two short exact sequences $H_j(P, K) < \infty$ for $j \leq s+1$. \end{theorem}

%\begin{proof} The group theoretic version of this result is \cite[Thm.~5.3]{Beno}. The proof of \cite[Thm.~5.3]{Beno} is homological. The obvious modification for Lie algebras works.\end{proof}

\begin{theorem} 
If the homological $m-1$-$m$-$m+1$-Conjecture for Lie algebras holds  then the Homological Surjection Conjecture for Lie algebras holds too.
\end{theorem}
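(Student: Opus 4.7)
The plan is to establish the Homological Surjection Conjecture by a double induction: outer on $m \geq 2$, inner on $k \geq m$, reading the hypothesis as asserting the homological $n$-$n+1$-$n+2$ Conjecture for every $n \geq 1$, so that the outer induction has access to the Homological Surjection Conjecture at $m-1$. The base case of the inner induction is $k = m$: since $p_{1,\dots,m}(L) = L_1 \oplus \dots \oplus L_m$ and $L \leq L_1 \oplus \dots \oplus L_m$, this forces $L = L_1 \oplus \dots \oplus L_m$, which is of type $FP_m$ via the K\"unneth construction applied to finitely generated free resolutions of $K$ over each $\U(L_i)$.

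For the inner inductive step ($k > m$), I would set $L' = p_{1,\dots,k-1}(L)$, $N = L \cap (L_1 \oplus \dots \oplus L_{k-1})$, and $Q = L_k/(L \cap L_k)$. First, $L'$ is a subdirect sum of $L_1 \oplus \dots \oplus L_{k-1}$ inheriting the Conjecture's hypothesis at parameters $(k-1, m)$, so by inner induction $L'$ is $FP_m$. The pivotal combinatorial step is to show $\gamma_{k-m+1}(L_k) \subseteq L \cap L_k$, generalising Theorem~\ref{nilpotent}(b): given $l_1, \dots, l_{k-m+1} \in L_k$, choose subsets $S_1, \dots, S_{k-m+1}$ of $\{1,\dots,k-1\}$ each of size $m-1$ whose union is $\{1,\dots,k-1\}$ (possible because $(k-m+1)(m-1) \geq k-1$ for $k \geq m$), use surjectivity of $p_{S_j \cup \{k\}}$ on $L$ to produce $h_j \in L$ with $p_i(h_j) = 0$ for $i \in S_j$ and $p_k(h_j) = l_j$, and observe that the left-normed bracket $[h_1, \dots, h_{k-m+1}]$ has zero projection in every coordinate $\leq k-1$ and equals $[l_1,\dots,l_{k-m+1}]$ in coordinate $k$. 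Consequently $Q$ is a quotient of the finitely generated nilpotent Lie algebra $L_k/\gamma_{k-m+1}(L_k)$, so $Q$ is finite-dimensional, hence of type $FP_\infty$ and a fortiori $FP_{m+1}$; and $L$ is precisely the fibre Lie sum of $N \rightarrowtail L' \twoheadrightarrow Q$ and $L \cap L_k \rightarrowtail L_k \twoheadrightarrow Q$.

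To conclude via the homological $(m-1)$-$m$-$(m+1)$ Conjecture at $n = m-1$, only the type $FP_{m-1}$ of $N$ remains to be checked. Now $N$ is itself a subdirect sum of $L_1 \oplus \dots \oplus L_{k-1}$ satisfying the Homological Surjection Conjecture's hypothesis at parameters $(k-1, m-1)$: for $J \subseteq \{1,\dots,k-1\}$ with $|J| = m-1$, surjectivity of $p_{J \cup \{k\}}$ on $L$ forces $p_J(N) = L_J$. For $m \geq 3$ the outer inductive hypothesis then delivers that $N$ is $FP_{m-1}$, whilst for the base case $m = 2$ I would argue directly as in the proof of Theorem~\ref{fin-pres}, using $\gamma_{k-1}(L_i) \subseteq N$ for $i \leq k-1$, the finite-dimensionality of $N / \bigoplus_{i \leq k-1} \gamma_{k-1}(L_i)$, and finite generation of each $\gamma_{k-1}(L_i)$ as an ideal of the finitely presented $L_i$, to conclude that $N$ is finitely generated. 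The main obstacle is exactly this tension: the fibre-sum conjecture wants an ideal of type $FP_{m-1}$, but the naturally occurring ideal $N$ satisfies only the weaker $(m-1)$-coordinate surjection hypothesis, so an outer induction on $m$ is forced; this is legitimate only under the universal reading of the theorem's hypothesis, and the base $m=2$ must be handled by the separate nilpotent/commutator argument.
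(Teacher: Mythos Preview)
Your proposal follows essentially the same route as the paper: induction on $k$, the same fibre-sum decomposition with $N = L \cap (L_1 \oplus \dots \oplus L_{k-1})$, $L' = p_{1,\dots,k-1}(L)$, $Q = L_k/(L\cap L_k)$ finite-dimensional nilpotent, the inner inductive hypothesis for $L'$, and then the Surjection Conjecture at level $m-1$ (with $k-1$ factors) applied to $N$. The paper uses the cruder bound $\gamma_{k-1}(L_i) \subseteq L$ (coming from surjection onto pairs, which is implied by surjection onto $m$-tuples) instead of your sharper $\gamma_{k-m+1}(L_k) \subseteq L\cap L_k$, but the refinement is not needed since either gives finite-dimensionality of $Q$. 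Where you are more careful than the paper is in making the outer induction on $m$ explicit and isolating the base case $m=2$; the paper's phrase ``applying the induction again'' to obtain that $N$ is $FP_{m-1}$ is exactly your outer-induction step, and your observation that this tacitly requires the homological $n$-$(n{+}1)$-$(n{+}2)$ Conjecture at all levels $n \leq m-1$, not merely at $n = m-1$, applies to the paper's own argument as well.
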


\begin{proof} The proof is similar to the proof of Theorem \ref{fin-pres}. 
Let  $m,k$ be natural numbers such that $ 2 \leq m \leq k$ and $$L \leq L_1 \oplus \ldots \oplus L_k$$ be a subdirect Lie sum 
 with $L \cap L_i \not= 0$, where each Lie algebra $L_i$ is $FP_m$ and for every $1 \leq i_1 < i_2 \ldots < i_m \leq k$ we have for the canonical projection $p_{i_1, \ldots, i_m} : L_1 \oplus \ldots \oplus L_k \to L_{i_1} \oplus \ldots \oplus L_{i_m}$ that $p_{i_1, \ldots, i_m}(L) = L_{i_1} \oplus \ldots \oplus L_{i_m}$. We have to prove that $L$ is a Lie algebra  of type $FP_m$.
The first paragraph of the proof of  Theorem \ref{fin-pres} implies that $\gamma_{k-1}(L_i) \subseteq L$. 

We prove the result by induction on $k \geq 2$, the case $k = 2$  is obvious so we assume $k \geq 3$. Let $Q=L_k/L\cap L_k,$ so 
 $Q$ is a finitely generated nilpotent Lie algebra, hence of finite dimension. Consider the short exact sequence of Lie algebras
\begin{equation} \label{ses01n} L \cap (L_1 \oplus \ldots \oplus L_{k-1}) \rightarrowtail  p_{1,2, \ldots , k-1} (L) \epi Q\end{equation}
 defined in the proof of Theorem \ref{fin-pres}. Then we see that $L$ is the fibre sum  of the short exact sequence (\ref{ses01n}) and
$L \cap L_k \mono L_k \epi Q.$ 

Note that if $m = k$ then $L = p_{1,2, \ldots, m}(L) = L_1 \oplus \ldots \oplus L_m$, so there is nothing to prove. Then we can assume that $m \leq k-1$.
 Since $p_{1,2, \ldots , k-1} (L)$ is a subdirect Lie  sum of $L_1 \oplus \ldots \oplus L_{k-1}$  that maps surjectively on the direct sum of any $m$ Lie algebras we deduce by induction on $k$ that  $p_{1,2, \ldots , k-1} (L)$ is $FP_m$. If we show that  $L \cap (L_1 \oplus \ldots \oplus L_{k-1})$ is a  Lie algebra of type $FP_{m-1}$ and assuming that the homological $m-1$-$m$-$m+1$-Conjecture for Lie algebras holds    we can  deduce that $L$ is $FP_m$ as required. By the proof of Theorem \ref{fin-pres} $L \cap (L_1 \oplus \ldots \oplus L_{k-1})$ is a finitely generated Lie algebra.
%To do that observe first  that 
%\begin{equation} \label{subdirect1} \gamma_{k-1}(L_1) \oplus \ldots \oplus \gamma_{k-1}(L_{k-1}) \subseteq L \cap (L_1 \oplus \ldots \oplus L_{k-1})
%\end{equation}  
Furthermore since $p_{i_1, \ldots, i_m}(L) = L_{i_1} \oplus \ldots \oplus L_{i_m}$ for $i_m = k$, we have that
$$p_{i_1, \ldots, i_{m-1}}(L \cap (L_1 \oplus \ldots \oplus L_{k-1})) = L_{i_1} \oplus \ldots \oplus L_{i_{m-1}} \hbox{ for }1 \leq i_1 < \ldots < i_{m-1} \leq k-1.$$
Applying the induction again we obtain that $L \cap (L_1 \oplus \ldots \oplus L_{k-1})$ is a  Lie algebra of type $FP_{m-1}$.
\end{proof}

\end{document}